\theoremstyle{definition}
\newtheorem{thm}{Theorem}[section]
\newtheorem{lem}[thm]{Lemma}
\newtheorem{prop}[thm]{Proposition}
\newtheorem{cor}[thm]{Corollary}
\newtheorem{rmk}[thm]{Remark}
\newtheorem{defn}[thm]{Definition}
\newcommand{\pt}{\partial_t}
\newcommand{\intT}{\int_{\mathbb{T}}}
\newcommand{\R}{\mathbb{R}}
\newcommand{\T}{\mathbb{T}}
\newcommand{\N}{\mathbb{N}}
\newcommand{\ue}{u^{\epsilon}}
\newcommand{\me}{m^{\epsilon}}
\title{Regularity and long time behavior of one-dimensional first-order mean field games and the planning problem}
\author{Nikiforos Mimikos-Stamatopoulos\thanks{University of Chicago, Department of Mathematics, Chicago, IL, \texttt{nmimikos@math.uchicago.edu}} \and Sebastian Muñoz\thanks{University of Chicago, Department of Mathematics, Chicago, IL,  \texttt{sbstn@math.uchicago.edu}}}
\date{}
\begin{document}

\maketitle
\begin{abstract}
     We study the regularity and long time behavior of the one-dimensional, local, first-order mean field games system and the planning problem, assuming a Hamiltonian of superlinear growth, with a non-separated, strictly monotone dependence on the density. We improve upon the existing literature by obtaining two regularity results. The first is the existence of classical solutions without the need to assume blow-up of the cost function near small densities. The second result is the interior smoothness of weak solutions without the need to assume neither blow-up of the cost function nor that the initial density be bounded away from zero. We also characterize the long time behavior of the solutions, proving that they satisfy the turnpike property with an exponential rate of convergence, and that they converge to the solution of the infinite horizon system. Our approach relies on the elliptic structure of the system and displacement convexity estimates. In particular, we apply displacement convexity methods to obtain both global and local a priori lower bounds on the density.
\end{abstract}
\textit{\small{}MSC: 35Q89, 35B65, 35J66, 35J70. }{\small\par}
\noindent \textit{\small{}Keywords: displacement convexity; Hamilton-Jacobi equations; quasilinear elliptic equations; oblique derivative problems; Bernstein method; non-linear method of continuity.}{\small\par}

\tableofcontents{}
\section{Introduction}
The main purpose of this paper is to establish that, under very general conditions, the solutions to the one-dimensional first-order mean field games system with local coupling (MFG for short) are smooth, and to fully characterize their long time behavior. Specifically, we study the regularity of the solutions to standard MFG with a prescribed terminal condition,

\begin{equation}\tag{MFG}
    \label{MainMFGSystem}
    \begin{cases}
        -u_t(x,t)+H(u_x(x,t),m(x,t))=0 & (x,t)\in Q_{T}=\T \times(0,T),\\
        m_t(x,t)-(m(x,t)H_p(u_x(x,t),m(x,t)))_x=0 & (x,t)\in Q_{T}, \\
        m(x,0)=m_0(x),\, u(x,T)=g(m(x,T)) & x\in\T,
    \end{cases}
\end{equation}
as well as to the so-called planning problem with a prescribed terminal density,
\begin{equation}\tag{MFGP}
    \label{PlanningSystem}
    \begin{cases}
        -u_t(x,t)+H(u_x(x,t),m(x,t))=0 & (x,t)\in Q_{T},\\
        m_t(x,t)-(m(x,t)H_p(u_x(x,t),m(x,t)))_x=0 & (x,t)\in Q_{T}, \\
        m(x,0)=m_0(x),\, m(x,T)=m_T(x) & x\in\T,
    \end{cases}
\end{equation}
where $\T$ denotes the $1$-dimensional torus,  $-H(p,m):\mathbb{R}\times(0,\infty)\rightarrow \mathbb{R}$ and $g(m):(0,\infty)\rightarrow \mathbb{R}$ are strictly increasing in $m$, $H$ has super-linear growth in $p$, and $m_0,m_T:\mathbb{T}\rightarrow [0,+\infty)$ are probability densities. We also show convergence of the solutions to each of these problems, as $T\rightarrow \infty$, to the solution of the infinite time horizon MFG system, 
\begin{equation}\tag{MFGL}
    \label{InfiniteHorizonSystem}
    \begin{cases}
        -v_t(x,t)+\lambda+H(v_x(x,t),\mu(x,t))=0 & (x,t)\in \T \times(0,\infty),\\
        \mu_t(x,t)-(\mu(x,t)H_p(v_x(x,t),\mu(x,t)))_x=0 & (x,t)\in \T \times(0,\infty), \\
        \mu(x,0)=m_0(x) & x\in\T,
    \end{cases}
\end{equation}
where $\lambda = -H(0,1)$.\\

MFG were introduced by Lasry and Lions \cite{Lions,LasryLions}, and
at the same time, in a particular setting, by Caines, Huang, and Malhamé
\cite{caines}. They are non-cooperative differential games with infinitely
many players, in which the players find an optimal strategy, determined by the value function $u$, by observing
the distribution $m$ of the other players. 

Classical solutions to \eqref{MainMFGSystem}, in arbitrary dimension, were previously obtained by the second author in \cite{Munoz1,Munoz2}, when the initial density is bounded away from 0, and under the blow-up assumption 
\begin{equation}\label{BlowUpCondition}\lim_{m\rightarrow 0^+}H(p,m)=+\infty,\end{equation}
which, from the optimal control point of view, corresponds to placing a very strong incentive for players to occupy low-density regions and precludes the appearance of empty regions. A similar regularity result was recently obtained in \cite{Porretta2} by A. Porretta for the case of (\ref{PlanningSystem}), when the Hamiltonian has the separated form $H(p,m)\equiv H(p)-f(m)$, and the terminal density $m_T$ is also bounded away from 0.

Our first contribution is the following theorem, which shows that, in the one-dimensional problem, assumption (\ref{BlowUpCondition}) can be completely removed. We refer to Section \ref{SectionAssumptions} for assumptions \hyperref[eq:M1]{(M)},
\hyperref[eq:Hpp bd]{(H)} \hyperref[g increasing]{(G)}, \hyperref[eq:ellipticity]{(E)}, \hyperref[eq:mweak]{(W)}, and \hyperref[eq:H long time]{(L)}, and to the \hyperref[SubsectionNotation]{notation} subsection for the definition of the function spaces mentioned below.

\begin{thm} \label{MainTheorem}
Let $0<\alpha<1$, and assume that \hyperref[eq:M1]{(M)},
\hyperref[eq:Hpp bd]{(H)}, \hyperref[g increasing]{(G)},
and \hyperref[eq:ellipticity]{(E)} hold. Then the following statements hold: 
\begin{enumerate}[label=(\roman*)]
   \item There exists a classical solution $(u,m)\in C^{3,\alpha}(\overline{Q_{T}})\times C^{2,\alpha}(\overline{Q_{T}})$
to (\ref{PlanningSystem}). The function $m$ is unique, and $u$ is unique up to a constant.
    \item There exists a unique classical solution $(u,m)\in C^{3,\alpha}(\overline{Q_{T}})\times C^{2,\alpha}(\overline{Q_{T}})$
to (\ref{MainMFGSystem}).
 \end{enumerate}
\end{thm}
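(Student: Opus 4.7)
The plan is to prove Theorem \ref{MainTheorem} by a vanishing regularization argument combined with the new displacement convexity bounds advertised in the abstract. Introduce a regularized Hamiltonian $H^\epsilon(p,m) = H(p,m) - \epsilon \log m$ (or any analogous perturbation) which satisfies both the blow-up condition \eqref{BlowUpCondition} and the hypotheses \hyperref[eq:Hpp bd]{(H)}, \hyperref[eq:ellipticity]{(E)}. For each $\epsilon>0$, the classical existence results of \cite{Munoz1, Munoz2} (together with an analogous statement for the planning case, which will presumably be established first) provide smooth solutions $(\ue, \me)$ to the regularized versions of \eqref{MainMFGSystem} and \eqref{PlanningSystem}. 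The proof then reduces to producing estimates on $(\ue, \me)$ that are uniform in $\epsilon$, the crucial one being a lower bound $\me \geq c_0 > 0$.

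The heart of the matter is this uniform lower bound, and here I would deploy the one-dimensional displacement convexity machinery. Parametrize $\me(\cdot,t)$ by its monotone rearrangement $X^\epsilon(\cdot,t)$ (the inverse cumulative distribution), and study the evolution of an internal-energy functional of the form $\mathcal{U}^\epsilon(t) = \intT F(\me)\,dx$ for a suitably chosen, McCann-displacement-convex $F$. Combining the continuity and Hamilton--Jacobi equations produces a second-order-in-time inequality for $\mathcal{U}^\epsilon$ whose sign is controlled by the strict monotonicity of $H$ in $m$; choosing $F$ so as to probe $\min_x \me$, and using that $m_0$, $m_T$ are bounded away from zero on $\T$ by hypothesis \hyperref[eq:M1]{(M)}, one extracts a quantitative lower bound on $\me$ depending only on the data. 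A matching upper bound $\me \leq C_0$ follows by a symmetric convexity argument or by maximum-principle considerations applied to a suitable dual variable.

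Once $0 < c_0 \leq \me \leq C_0$ is known independently of $\epsilon$, the system becomes uniformly non-degenerate, and standard tools take over: a Bernstein-type argument on the HJ equation gives uniform gradient bounds on $\ue$, after which Schauder theory applied to the linearized system yields uniform $C^{3,\alpha}(\overline{Q_T}) \times C^{2,\alpha}(\overline{Q_T})$ bounds on $(\ue, \me)$. Arzelà--Ascoli then extracts a subsequence converging to a classical solution $(u,m)$ of the original system. Uniqueness of $m$ in both parts follows from the Lasry--Lions monotonicity identity obtained by differentiating $\intT (u_1-u_2)(m_1-m_2)\,dx$ in time, integrating by parts, and invoking the strict monotonicity of $-H(p,\cdot)$ and (for part (ii)) of $g$; uniqueness of $u$, up to an additive constant in the planning case, then follows from reading the HJ equation as a first-order PDE with known coefficients.

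The principal obstacle will be this $\epsilon$-independent lower bound on $\me$. In earlier work the positivity of $m$ was secured precisely by the blow-up hypothesis \eqref{BlowUpCondition}; here it must be replaced by a delicate displacement-convexity computation, and the key technical choice---namely, selecting the internal-energy density $F$ so that it quantitatively controls $\min_x \me$ rather than merely an $L^q$ norm---will be the most subtle step of the proof.
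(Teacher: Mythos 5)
For part (ii), your strategy is similar in spirit to the paper's, but the paper does not regularize the Hamiltonian at all; it runs the method of continuity from \cite{Munoz1} directly, replacing the use of the blow-up hypothesis \eqref{BlowUpCondition} with the displacement convexity lower bound of Corollary \ref{DisplacementConvexityCorBounds}. You should also note that for \eqref{MainMFGSystem} the terminal density $m(\cdot,T)$ is not prescribed, so Corollary \ref{DisplacementConvexityCorBounds} alone does not yield a lower bound on $m$; a separate comparison-principle argument (Proposition \ref{mAnduBounds}) is needed to bound $m(\cdot,T)$ below in terms of $m_0$ via the terminal condition $u(\cdot,T)=g(m(\cdot,T))$, a step your plan does not isolate.

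The genuine gap is in part (i), the planning problem. You parenthetically defer the existence of solutions to a blow-up planning problem to ``an analogous statement \dots established first,'' but that is precisely the missing ingredient, and it cannot be obtained along your route. The obstruction is structural, not a matter of lower bounds on $m$: for \eqref{PlanningSystem} the linearization of \eqref{eq:quasilinear} is an oblique (Neumann-type) boundary value problem that is not invertible, because its solvability requires a compatibility condition on the right-hand sides $f,g_1,g_2$ that depends on $u$ itself. Hence the implicit function theorem, and with it the method of continuity, cannot be applied directly, and this failure persists whether or not $H$ blows up near $m=0$. Your log-regularization $H^\epsilon=H-\epsilon\log m$ modifies only the interior operator and leaves the planning boundary conditions intact, so it does not address this at all. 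The paper's device is instead to approximate \eqref{PlanningSystem} by the $\epsilon$-penalized systems \eqref{MFGepsilon}, with terminal condition $\epsilon\ue(\cdot,T)=\me(\cdot,T)-m_T$: these are genuine MFG problems with an invertible oblique linearization, solvable by the method of continuity, and the uniform $L^\infty$-bound on $\ue$ of Lemma \ref{UniformBoundsOnUepsilon} (which requires a dedicated argument to control $\me(\cdot,T)$ as well) is what permits passing to the limit $\epsilon\to0$.
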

Our second result establishes interior smoothness of the solutions when, besides removing the assumption \eqref{BlowUpCondition}, one also weakens the  lower bound assumptions for given densities $m_0$ and $m_T$, replacing the latter with the integrability conditions
\begin{equation} \label{m0mTintegrability}
    \frac{1}{m_0^{\kappa}}\in L^1(\T),\,\,\,\frac{1}{m_T^{\kappa}}\in L^1(\T) \text{ for some }{\kappa}>0.
\end{equation}
 We observe that, in particular, \eqref{m0mTintegrability} allows the initial density to vanish in a set of measure zero. In spite of this fact, our result also shows that $m$ becomes strictly positive instantly after the initial time. Moreover, in the case of \eqref{MainMFGSystem}, the density remains bounded below, and the solution remains smooth up to and including $t=T$. We refer to Section \ref{WeakSection} for the definition of a weak solution.
\begin{thm}\label{LocalTheorem}
Let $0<\alpha<1$, and assume that \hyperref[eq:mweak]{(W)},
\hyperref[eq:Hpp bd]{(H)} \hyperref[g increasing]{(G)}, 
and \hyperref[eq:ellipticity]{(E)} hold. Then the following statements hold: 
\begin{enumerate}[label=(\roman*)]
 \item There exists a weak solution 
 \[(u,m)\in (\text{{BV}}(Q_{T})\cap L^{\infty}(Q_T))\times(C([0,T],H^{-1}(\T))\cap L_{+}^{\infty}(Q_{T}))\]
to (\ref{PlanningSystem}). Moreover, $(u,m)\in C^{3,\alpha}_{\text{loc}}(Q_T)\times C^{2,\alpha}_{\text{loc}}(Q_T)$ and  $m>0$ in $(0,T)$. The function $m$ is unique, and $u$ is unique up to a constant.
   \item Assume, further, that the function $H$ satisfies, for each $(p,m)\in \mathbb{R} \times(0,\infty) $,
  \begin{equation} \label{RadialConditionH}H_p(p,m)p\geq 0.\end{equation}
   Then there exists a unique weak solution 
   \[(u,m)\in (\text{{BV}}(Q_{T})\cap L^{\infty}(Q_T))\times (C([0,T],H^{-1}(\T))\cap L_{+}^{\infty}(Q_{T}))\]
to (\ref{MainMFGSystem}). Moreover, $(u,m)\in C^{3,\alpha}_{\text{loc}}(\T \times (0,T])\times C^{2,\alpha}_{\text{loc}}(\T \times (0,T])$, and $m>0$ in $(0,T]$.\end{enumerate}
\end{thm}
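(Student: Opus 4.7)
The plan is to approach Theorem \ref{LocalTheorem} by approximation, bootstrapping from Theorem \ref{MainTheorem}. I would mollify and perturb the endpoint data to produce sequences $m_0^\epsilon, m_T^\epsilon \in C^\infty(\T)$ of probability densities with $m_0^\epsilon, m_T^\epsilon \geq \epsilon$, $m_0^\epsilon \to m_0$ in $L^1(\T)$, and, crucially, with $\|(m_0^\epsilon)^{-\kappa}\|_{L^1(\T)}$ and $\|(m_T^\epsilon)^{-\kappa}\|_{L^1(\T)}$ uniformly bounded in $\epsilon$ (for the MFG case only the initial data needs regularizing). Theorem \ref{MainTheorem} provides classical solutions $(\ue,\me)$ for each $\epsilon$, and the whole problem reduces to producing $\epsilon$-uniform estimates strong enough to pass to the limit and to rule out vanishing of $m$ in the interior.

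Upper bounds on $\me$ and the coarse bounds on $\ue$ (e.g.\ $L^\infty$ via the Hamilton--Jacobi comparison principle, BV in space-time via the monotonicity structure) should be inherited from the global estimates that enter the proof of Theorem \ref{MainTheorem}, depending only on $\|m_0\|_\infty$, $\|m_T\|_\infty$, and the structural constants in \hyperref[eq:Hpp bd]{(H)}, \hyperref[g increasing]{(G)}, \hyperref[eq:ellipticity]{(E)}, \hyperref[eq:mweak]{(W)}. The genuinely new ingredient is a \emph{local} positive lower bound on $\me$ away from $t=0$ (and, in part (i), away from $t=T$). For this I would invoke the displacement convexity machinery advertised in the abstract: along the optimal-transport interpolation encoded by the continuity equation, functionals of the form $m \mapsto \intd m^{-\kappa}$ are convex in time (up to controlled correction terms coming from the specific form of $H$), so the endpoint integrability of $(m_0^\epsilon)^{-\kappa}$ and $(m_T^\epsilon)^{-\kappa}$ propagates to an $\epsilon$-uniform bound on $\|(\me(\cdot,t))^{-\kappa}\|_{L^1(\T)}$ at interior times. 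In one space dimension, combining this integral bound with the already-available BV/Lipschitz control on $\me$ upgrades to a pointwise bound $\me(x,t) \geq c(\delta) > 0$ on $\T \times [\delta, T-\delta]$.

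With this local lower bound, the MFG system becomes uniformly elliptic (in the sense exploited in Section \hyperref[SectionAssumptions]{assumptions}) on each strip $\T \times [\delta, T-\delta]$. The Schauder and Bernstein-type arguments used for Theorem \ref{MainTheorem} then yield $\epsilon$-uniform $C^{3,\alpha} \times C^{2,\alpha}$ bounds on such strips, and Arzelà--Ascoli together with a diagonal extraction produces a limit $(u,m) \in C^{3,\alpha}_{\text{loc}}(Q_T) \times C^{2,\alpha}_{\text{loc}}(Q_T)$ solving the equations pointwise. The endpoint traces pass to the limit as $m_0$ and $m_T$ thanks to the $L^1$ convergence of the data and the $C([0,T];H^{-1}(\T))$ compactness implied by the continuity equation with $L^\infty$ density and bounded velocity. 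For part (ii), the additional structural condition \eqref{RadialConditionH} together with the monotonicity of $g$ should prevent concentration of $\me$ at $t=T$ in the MFG problem, giving $\me \geq c > 0$ on $\T \times [\delta,T]$ for all small $\delta$, hence regularity and positivity up to and including $t=T$; uniqueness then follows from the Lasry--Lions monotonicity identity, which is admissible in the weak class once one knows the limit solution is smooth in $\T \times (0,T]$ and $m$ is bounded above.

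The main obstacle I anticipate is making the local lower bound step quantitative and $\epsilon$-uniform. The displacement convexity computation typically requires a smooth strictly positive density to be manipulated pointwise, so executing it on $\me$ and then passing constants carefully to functions of the \emph{approximated} endpoint data is delicate; one must check that the convexity-defect terms arising from the non-separated $H$ can be absorbed using \hyperref[eq:Hpp bd]{(H)} and \hyperref[eq:ellipticity]{(E)} in a way that depends only on $\|(m_0^\epsilon)^{-\kappa}\|_{L^1}$ and not on $\epsilon$. The one-dimensionality is essential here, both because it makes the optimal transport map explicit and because the Sobolev embedding upgrading $L^1$ control of $m^{-\kappa}$ to a pointwise bound requires the concurrent BV/Lipschitz estimate that is special to $d=1$.
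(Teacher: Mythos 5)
Your overall architecture — smooth approximation of the data with uniform control of $\|(m_0^\epsilon)^{-\kappa}\|_{L^1}$, invoking Theorem \ref{MainTheorem}, obtaining uniform global bounds and a uniform interior lower bound on the density, uniform ellipticity on strips, interior Schauder/Bernstein, and passage to the limit with Lasry--Lions uniqueness — is the same as the paper's. However, the central step, the interior lower bound on $m^\epsilon$, is not correctly argued, and the error is not a technicality.

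You propose to propagate the $L^1$ bound on $(m^\epsilon(\cdot,t))^{-\kappa}$ by displacement convexity and then to ``upgrade'' it to a pointwise lower bound by combining with ``the already-available BV/Lipschitz control on $m^\epsilon$.'' This fails on two counts. First, there is no $\epsilon$-uniform Lipschitz (or BV-in-space) estimate available at this stage: all the quantitative regularity constants for $m^\epsilon$ degenerate as $\inf m^\epsilon \to 0$, so uniform Lipschitz control is precisely what one is trying to derive from the lower bound, not vice versa; the argument as stated is circular. Second, even granting Lipschitz control, an $L^1$ bound on $m^{-\kappa}$ is fully compatible with $m$ vanishing at a point: if $m(x_0)=0$ and $m$ is $L$-Lipschitz then $m^{-\kappa} \gtrsim |x-x_0|^{-\kappa}$ near $x_0$, which is integrable whenever $\kappa<1$, and \hyperref[eq:mweak]{(W)} allows small $\kappa$. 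Indeed $m_0$ itself satisfies $\|m_0^{-\kappa}\|_{L^1}<\infty$ while possibly vanishing, which is exactly the point of the weakened hypothesis; so a time-uniform $L^1(m^{-\kappa})$ bound cannot, on its own, yield interior positivity.

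The paper's Proposition \ref{LocalLowerBoundDensityProp} uses a genuinely different and sharper mechanism. The displacement convexity identity \eqref{DisplacementFormula}, applied to $h(m)=m^{-p\kappa}$ for \emph{arbitrary} $p>1$, produces a lower bound on $\frac{d^2}{dt^2}\int_\T m^{-p\kappa}$ by a coercive spatial term $\int_\T (m^{\lambda})_x^2$ with $\lambda<0$. A one-dimensional Sobolev/FTC estimate (using $\int_\T m(\cdot,t)=1$) converts this into $\|m^{-1}(t)\|_\infty^{2|\lambda|}$, and interpolation between the fixed $L^1(m^{-\kappa})$ bound and $\|m^{-1}\|_\infty$ closes a superlinear differential inequality
\[
-\phi''(t) + \tfrac{1}{C}\phi(t)^r \le C, \qquad \phi(t)=\int_\T m^{-p\kappa}(t),\ r>1.
\]
Barriers of the form $A_p t^{-p\kappa\beta}+A_p(T-t)^{-p\kappa\beta}+K_p$ then give interior bounds on $\|m^{-p\kappa}(t)\|_{L^1}$, and a further convexity-in-$t$/averaging argument upgrades these, as $p\to\infty$, to the pointwise lower bound $m(x,t)\gtrsim \min(t,T-t)^{\beta+\delta}$. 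None of this can be replaced by the $L^1$-plus-Lipschitz shortcut. Relatedly, the Bernstein gradient bound on interior strips is itself not a routine restriction of Proposition \ref{GradientEstimateProp}: one must build a time-localized auxiliary function by subtracting a barrier $K\phi_i(t)$ (blowing up at the relevant endpoints) to force the maximum to an interior time, and then reproduce the nonlinear ODE comparison at the level of the Bernstein quantity. Finally, for part (ii) the extension up to $t=T$ hinges on Lemma \ref{DecreasingLemmaForDisplacement}, which uses \eqref{RadialConditionH} and the terminal coupling $u(\cdot,T)=g(m(\cdot,T))$ to show $\frac{d}{dt}\int_\T h(m(t))\big|_{t=T}\le 0$ for convex $h$, hence monotonicity of these functionals; your informal ``prevent concentration at $t=T$'' gestures at this but would need to be carried out in this precise form.
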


Concerning the long time behavior of \eqref{MainTheorem}, it was shown by P. Cardaliaguet and P.J. Graber in \cite[Thm 5.1]{CardaliaguetGraber} that the rescaled solution $(x,s) \mapsto u(x,sT)/T$ converges, in a certain space $L^p(\T \times (\delta,1))$, to the map $\lambda (1-s)$, while the rescaled density $(x,s) \mapsto m(x,sT)$ converges in $L^p(\T \times (0,1))$ to the invariant measure $\mu \equiv 1$. Our third result shows that, when the marginals are strictly positive, a much stronger statement holds. That is, the solutions satisfy the turnpike property with an exponential rate of convergence, and the limit as $T\rightarrow \infty $ of the pair $(u(t)-\lambda (T-t),m(t))$ can be fully characterized as the solution to \eqref{InfiniteHorizonSystem}. We emphasize that this is a convergence result \emph{at the original time scale} (compare with \cite[Thm 2.6, Thm. 5.1]{CardPor} and \cite[Thm 4.1, Thm. 5.3]{CirantPor}).
\begin{thm} \label{thm: long time}
Assume that \hyperref[eq:M1]{(M)},
\hyperref[eq:Hpp bd]{(H)}, \hyperref[g increasing]{(G)},\hyperref[eq:ellipticity]{(E)}, and \hyperref[eq:H long time]{(L)}, hold, and let $T>1$.
Assume that $(u^T,m^T)$ is either the solution to \eqref{MainMFGSystem}, or the solution to \eqref{PlanningSystem} that satisfies $\int_{\T} v^T(\cdot,\frac{T}{2})=0$, where $$v^T(x,t):=u^T(x,t)-\lambda (T-t).$$ Then the following holds:
\begin{enumerate} [label=(\roman*)] \item There exist constants $C,\omega>0$, independent of $T$, such that
\[\|m^T(t)-1\|_{L^\infty(\T)} + \|u^T_x(t)\|_{L^\infty(\T)}\leq C(e^{-\omega t}+e^{-\omega(T-t)}), \,\,\,\,\, t\in [0,T].\]
Moreover, if $(u^T,m^T)$ solves $\eqref{MainMFGSystem}$, and $\eqref{RadialConditionH}$ holds, we have
\[\|m^T(t)-1\|_{L^\infty(\T)} + \|u^T_x(t)\|_{L^\infty(\T)}\leq Ce^{-\omega t}, \,\,\,\,\, t\in [0,T].\]
\item There exist functions $(v,\mu)$ such that, for each $T_0>0$,
\[v^T\rightarrow v \text{ in }   C^{3,\alpha}(\T\times [0,T_0])\text{ as } T\rightarrow \infty,  \]
and
\[m^T\rightarrow \mu  \text{ in }   C^{2,\alpha}(\T\times [0,T_0]) \text{ as } T\rightarrow \infty.\]
Moreover, one has \begin{equation} \label{eq:limit of v,mu} \lim_{t\rightarrow \infty} v(\cdot,t)=c,\,\,\,\lim_{t \rightarrow \infty}\mu(\cdot,t)=1 \text{ uniformly in } \T,\end{equation} 
where
\[c=\begin{cases} g(1) & \text{ if } (u^T,m^T) \text{ solves } \eqref{MainMFGSystem}, \\
0  &\text{ if } (u^T,m^T) \text{ solves }\eqref{PlanningSystem}.\end{cases} \]
Finally, $(v,\mu)$ is the unique classical solution to \eqref{InfiniteHorizonSystem} satisfying \eqref{eq:limit of v,mu} and
\begin{multline} \label{(v,mu) space}v\in W^{1,\infty}(\T \times (0,\infty)),\,\,\,\mu^{-1}\in L^{\infty}(\T \times(0,\infty)),\\ \mu-1 \in L^{1}(\T \times (0,\infty))\cap L^{\infty}(\T \times (0,\infty)) .\end{multline}
\end{enumerate}
\end{thm}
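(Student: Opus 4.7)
My plan is to study the pair $(v^T, m^T)$ with $v^T := u^T - \lambda(T-t)$, which solves the shifted Hamilton-Jacobi equation $-v^T_t + H(v^T_x, m^T) = H(0,1)$ together with the unchanged continuity equation; the constant pair $(0, 1)$ is then an equilibrium of this shifted system, and both parts of the theorem amount to quantifying convergence of $(v^T, m^T)$ to this equilibrium. The first step is to establish $T$-independent a priori bounds $\|m^T\|_{L^\infty}, \|1/m^T\|_{L^\infty}, \|v^T_x\|_{L^\infty} \leq C$. The upper bound on $m^T$ and the Lipschitz bound on $v^T$ follow from the elliptic/Bernstein machinery developed in the proofs of Theorems \ref{MainTheorem}--\ref{LocalTheorem}, once one verifies that the constants do not blow up with $T$. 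The uniform lower bound on $m^T$ away from zero comes from the displacement-convexity estimates of the preceding sections, where assumption \hyperref[eq:H long time]{(L)} is precisely what is needed to prevent degeneration over long time intervals.

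For the exponential turnpike in (i), I would apply the Lasry-Lions-type identity to the two solutions $(v^T, m^T)$ and $(0, 1)$ of the shifted system. Computing $\Phi'(t)$ for $\Phi(t) := \int_\T v^T (m^T - 1)\,dx$ using both equations, invoking convexity of $H$ in $p$ to rewrite the cross term $-\int v^T_x m^T H_p(v^T_x, m^T)$, and expanding around $(0, 1)$, the strict monotonicity $-H_m > 0$ (quantified via \hyperref[eq:H long time]{(L)}) and convexity $H_{pp} > 0$ together yield
\[
c_0 \int_0^T\!\!\int_\T \bigl[(m^T - 1)^2 + (v^T_x)^2\bigr]\,dx\,dt \leq \Phi(0) - \Phi(T).
\]
For \eqref{PlanningSystem}, both $\Phi(0)$ and $\Phi(T)$ are bounded using the data. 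In the MFG case, $v^T(T) = g(m^T(T))$ and monotonicity of $g$ force $\Phi(T) \geq 0$, reducing the right-hand side to $\Phi(0)$ alone and producing a stronger one-sided integrated smallness. This $L^2$-in-time bound is then upgraded to the claimed pointwise exponential decay by linearizing the shifted system around $(0, 1)$: by the one-dimensional elliptic structure already exploited in earlier sections, the linearization is a second-order elliptic system in $(x, t)$ whose transversal operator has a strict spectral gap determined by second derivatives of $H$ at $(0, 1)$. A Fourier decomposition of the linearized problem combined with a Gronwall estimate for the nonlinear remainder, or equivalently a direct weighted Bernstein-type energy estimate, produces the claimed exponential boundary layer bound with rate $\omega > 0$ depending only on \hyperref[eq:Hpp bd]{(H)}, \hyperref[eq:ellipticity]{(E)}, \hyperref[eq:H long time]{(L)}. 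The double-sided bound $e^{-\omega t} + e^{-\omega(T-t)}$ for \eqref{PlanningSystem} reflects two boundary layers, whereas \eqref{RadialConditionH} in the MFG case renders the terminal datum compatible with equilibrium and eliminates the right boundary layer.

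For part (ii), the uniform estimate from (i) and the local smoothness of Theorems \ref{MainTheorem}--\ref{LocalTheorem} give, on each cylinder $\T \times [0, T_0]$, a uniform $C^{3,\alpha} \times C^{2,\alpha}$ bound on $(v^T, m^T)$ for $T > T_0$. A diagonal Arzelà-Ascoli extraction produces a subsequence converging in $C^{3,\alpha}_{\text{loc}}(\T \times [0,\infty)) \times C^{2,\alpha}_{\text{loc}}(\T \times [0,\infty))$ to a classical solution $(v, \mu)$ of \eqref{InfiniteHorizonSystem}, which inherits \eqref{eq:limit of v,mu} and the bounds \eqref{(v,mu) space} by passage to the limit in the uniform exponential estimate. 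For uniqueness within the class \eqref{(v,mu) space} satisfying \eqref{eq:limit of v,mu}, I apply the Lasry-Lions identity to two candidate solutions $(v_1, \mu_1), (v_2, \mu_2)$: the integrability $\mu_i - 1 \in L^1 \cap L^\infty$ and the decay at $t = \infty$ ensure that the boundary terms at $t = 0$ and $t = \infty$ vanish, forcing $\mu_1 \equiv \mu_2$ and then $v_1 - v_2 \equiv$ const, with this constant pinned by \eqref{eq:limit of v,mu}. Since every subsequence shares the same limit, the full family $(v^T, m^T)$ converges.

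The hardest step will be producing the spectral gap $\omega$ and the prefactor $C$ uniformly in $T$, and in the MFG case carefully tracking the terminal boundary term to obtain the sharp asymmetric decay: the integrated smallness alone gives only $L^2$ control, and promoting this to pointwise $e^{-\omega t}$ requires that the linearization, the uniform bounds, and the monotonicity of $g$ all combine cleanly at $t = T$. A secondary technical difficulty is the uniform-in-$T$ lower bound $1/m^T \leq C$ over the full cylinder, which requires showing that the displacement-convexity lower bound on $m^T$ propagates without degradation over arbitrarily long time intervals --- essentially the content of assumption \hyperref[eq:H long time]{(L)}.
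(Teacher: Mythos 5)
Your outline of part (ii) — Arzelà--Ascoli extraction, interior elliptic/Schauder estimates for $C^{3,\alpha}\times C^{2,\alpha}$ convergence, and a Lasry--Lions argument to obtain uniqueness for \eqref{InfiniteHorizonSystem} in the class \eqref{(v,mu) space} — matches the paper's Lemma \ref{lem: uniqueness lemma} and the final proof closely. The same is true for the $T$-independent bounds on $m^T$, $1/m^T$, $v^T_x$ (though note that the uniform lower bound on $m^T$ does not actually rely on \hyperref[eq:H long time]{(L)}; it follows from Corollary \ref{DisplacementConvexityCorBounds} and Proposition \ref{mAnduBounds}, which depend only on the endpoint data. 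Assumption \hyperref[eq:H long time]{(L)} is what supplies the quantitative coercivity used in the decay estimate itself.)

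The genuine gap is in the upgrade step of part (i). Your Lasry--Lions identity against the equilibrium $(0,1)$ yields only the integrated bound $\int_0^T\!\int_\T \bigl[(m^T-1)^2 + (v^T_x)^2\bigr]\,dx\,dt \le C$, and you then propose to promote this to pointwise exponential decay by linearizing around $(0,1)$, invoking a spectral gap, and controlling the nonlinear remainder by Gronwall or a weighted Bernstein estimate. As written, this is a plan for a different theorem, not an argument. The first-order system has no parabolic smoothing to absorb the quadratic remainders, and nothing in your sketch explains why the Gronwall or Bernstein step would close; this is precisely the difficulty that makes the linearization/spectral route delicate even in the second-order setting of Cirant--Porretta. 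The paper avoids all of this: Proposition \ref{DisplacementProp} applied with $h(m)=(m-1)^2$, together with \hyperref[eq:H long time]{(L)}, Corollary \ref{DisplacementConvexityCorBounds}, and a Poincaré-type inequality (valid because $\int_\T m^T \equiv 1$), gives directly the second-order differential inequality
\[
-\phi'' + \tfrac{1}{C}\,\phi \le 0, \qquad \phi(t):=\int_\T (m^T(x,t)-1)^2\,dx,
\]
and the comparison principle for this ODE produces $\phi(t) \le \phi(0)e^{-2\omega t} + \phi(T)e^{-2\omega(T-t)}$ with $\omega$ explicit and $T$-independent. The one-sided bound in the MFG case follows because Lemma \ref{DecreasingLemmaForDisplacement} (which uses \eqref{RadialConditionH}) supplies the Robin condition $\phi'(T) \le -\omega\,\phi(T)$, eliminating the right boundary layer. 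The displacement convexity of $t\mapsto\int_\T(m^T-1)^{2k}$ then upgrades the $L^2_x$ decay to $L^\infty_x$ decay by a time-averaging argument, and the gradient bound $\|u^T_x(t)\|_\infty$ is obtained by testing the differentiated elliptic equation and applying the maximum principle to $u_x$, not by any linearization. This ODE-in-time mechanism, coming from the integral displacement-convexity formula \eqref{DisplacementFormula}, is the key idea missing from your proposal; without it, the passage from integrated $L^2$ smallness to pointwise exponential decay is not established.
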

In particular, since the Hamiltonian $H(p,m)$ is non-separated, our results yield well-posedness and regularity of MFG systems with congestion, such as 
\begin{equation}
\begin{cases}
 -u_{t}+\frac{|u_x|^{2}}{2(m+c_{0})^{\alpha}}=f(m) \text{ in }  Q_T,\\ 
m_{t}-(\frac{m}{(m+c_{0})^{\alpha}}u_x)_x=0 \text{ in } Q_T, & 
\end{cases}\label{eq:mfg cong}
\end{equation}
where $0<\alpha<2$, $c_0\geq 0$, and $f'>0$.
Some of the key techniques used in \cite{Munoz1,Munoz2,Porretta2}, as well as in the present work, were developed by P.-L. Lions in his lectures at Collège de France \cite{Lions}, where he obtained several a priori estimates for the solutions to (\ref{PlanningSystem}), in the special case of a separated, quadratic Hamiltonian. The most important of these is the observation that the problems (\ref{MainMFGSystem}) and \eqref{PlanningSystem} can be transformed into a single quasilinear elliptic equation in $u$ after eliminating the variable $m$. Indeed, if one defines $H^{-1}$ by
\[
m=H^{-1}(p,H(p,m)),\]
then $m=H^{-1}(u_x,u_t)$ and the problem becomes
\begin{equation}
\tag{Q}\begin{cases}
Qu:=-\text{Tr}(A(Du)D^2u)=0 & \text{in }Q_{T},\\[1pt]
Nu:=B(x,t,u,Du)=0 & \text{on }\partial Q_{T},
\end{cases}\label{eq:quasilinear}
\end{equation}
where $Du=(u_x,u_t)$ and, for $(x,z,p,s)\in\T\times\mathbb{R}\times\mathbb{R}\times\mathbb{R},$
\begin{equation}
\tag{Q1}A(p,s)=\left(H_p+\frac{1}{2}mH_{mp},-1\right)\otimes\left(H_p+\frac{1}{2}mH_{mp},-1\right)-\begin{pmatrix}\frac{1}{4}m^2H_{mp}^2+mH_{m}H_{pp} & 0\\
0 & 0
\end{pmatrix},\label{eq:matrix-1}
\end{equation}
\begin{align}
\tag{B1}B(x,0,z,p,s)= & -s+H(p,m_{0}(x)),\;\label{eq:boundary1}
\end{align}
and
\begin{equation} \tag{B2} B(x,T,z,p,s)= \begin{cases}
s-H(p,g^{-1}(z)) & \text{in the case of (\ref{MainMFGSystem})} \\
s-H(p,m_T(x))& \text{in the case of (\ref{PlanningSystem})}.
\end{cases}\end{equation}
The condition for ellipticity, that is, for the matrix $A$ to be positive, is
\begin{equation}\label{UniquenessCondition}
-4mH_{m}H_{pp}> m^2H_{mp}^2,
\end{equation} 
which is also the well-known condition for uniqueness to (\ref{MainMFGSystem}) that follows from the Lasry-Lions monotonicity method (see, for instance, Lions and P.E. Souganidis \cite{Takis}). We remark from \eqref{UniquenessCondition} that, in particular, the strict positivity of the density is crucial for the regularizing properties of the system. The lower bounds on $m$ obtained in Corollary \ref{DisplacementConvexityCorBounds} and Proposition \ref{LocalLowerBoundDensityProp} both heavily rely on the one-dimensionality assumption, and this is the main obstacle to generalizing our results to higher dimensions. Indeed, in dimensions $d>1$, it remains an open question whether the existence of smooth solutions to local first order MFG systems can still be established if one removes or significantly weakens (\ref{BlowUpCondition}), or if $m_0$ is not assumed to be bounded away from $0$. Even for $d=1$, it is still unknown whether one can allow $m_0$ or $m_T$ to vanish in a set of positive measure.
 
 Section \ref{SectionAssumptions} contains all the assumptions that will be in place about the Hamiltonian $H$, as well as the initial and terminal data. In Section \ref{SectionDisplacement}, we establish an integral displacement convexity formula (see Proposition \ref{DisplacementProp}) that will allow us to bound the density $m$ in terms of its initial and terminal values. Section \ref{SectionAPrioriBounds} contains the necessary a priori estimates that are needed to prove the existence of classical solutions. In particular, we obtain, in Section \ref{SubsectionEpsilonPenalized}, estimates for an $\epsilon$--approximation of (\ref{PlanningSystem}) via standard MFG systems with a terminal condition of the type $u(\cdot,T)=g(\cdot,m(\cdot,T))$, which we require to prove existence for (\ref{PlanningSystem}). Finally, we provide a counterexample to existence of solutions to (\ref{MainMFGSystem}) when the terminal cost function $g$ is also allowed to depend on the space variable (see Proposition \ref{PropositionCounterexample}). In Sections \ref{SectionExistence} \ref{WeakSection}, and \ref{LongSection}, we prove our main results, Theorems \ref{MainTheorem}, \ref{LocalTheorem}, and \ref{thm: long time}, respectively.

We remark that, in the special case of a separated Hamiltonian, the displacement convexity estimates of Section \ref{SectionDisplacement}  were first obtained by D. A. Gomes and T. Seneci in \cite{Gomes}. Further estimates on the density using displacement convexity were also obtained by T. Bakaryan, R. Ferreira, and Gomes in \cite{Gomes2}, and by Porretta in \cite{Porretta2} (see also Lavenant, Santambrogio \cite{SantambrogioLavenant}). Weak solutions, as defined in Section \ref{WeakSection}, have been widely studied for both \eqref{MainMFGSystem} (see \cite{Cardaliaguet, CardaliaguetGraber, CardaliaguetGraberPorrettaTonon, CardaliaguetPorretta, Munoz1}) and \eqref{PlanningSystem} (see \cite{weakMFGP1,Porretta2, weakMFGP2}). For classical solutions in the time-independent case we refer to Evans \cite{Evans} and Gomes, Mitake \cite{Gomes3}. Concerning the study of the long time behavior of solutions, specifically the second part of Theorem \ref{thm: long time}, we follow the program developed by Porretta and Cirant in \cite{CirantPor}, where a similar analysis was performed for second-order MFG systems, and, unlike the earlier work \cite{CardPor}, does not involve the use of the master equation (see also \cite{CardLasLiPor,PorrettaLT}).

\subsection*{Notation} \label{SubsectionNotation}
Let $d,k\in \mathbb{N}$. For $T>0$, we denote by $Q_T:=\T\times (0,T)$, $\overline{Q_T}:=\T\times [0,T]$ and $\partial Q_T:=\T\times \{0,T\}$. For $\alpha\in (0,1],\, T>0$, and $\Omega\subset \R^d$ we denote by  $C^{k+a}(\Omega),$ the standard space of $k$ times differentiable scalar functions with $\alpha-$H\"older continuous $k^{\text{th}}$ order derivatives, with the usual norm. Furthermore, we denote by $C_{loc}^{k+\alpha}(\Omega)$ the functions $u$ that belong to $C^{k+\alpha}(K)$, for all compact sets $K\subset \Omega$. For functions $u:   \T\times [0,T]\rightarrow \R$, we denote by $\text{osc } u:=\max\limits_{(x,t)\in \T\times [0,T]}u(x,t)-\min\limits_{(x,t)\in \T\times [0,T]}u(x,t)$, $Du(x,t):=(u_x(x,t),u_t(x,t))$. We denote by $H^{-1}(\T)$ the dual space of
the Sobolev space $H^{1}(\T)$, and the space of $H^{-1}(\mathbb{T}^{d})$--valued
$\alpha$--Hölder continuous functions by $C^{0,\alpha}([0,T];H^{-1}(\mathbb{T}^{d}))$. We write $C=C(K_{1},K_{2},\ldots,K_{M})$ for a positive constant $C$ depending monotonically on the non-negative quantities $K_{1},\ldots,K_{M}.$ BV$(Q_{T})$ denotes the space
of functions of bounded variation, and $L_{+}^{\infty}(Q_{T}$) consists
of the functions $m\in L^{\infty}(Q_{T}$) such that $m\geq0$ a.e. in $Q_{T}$.

\section{Assumptions} \label{SectionAssumptions}
In what follows, $C_0$ and $\gamma$, $\alpha$ are positive constants, with $\gamma>1$, and $0<\alpha<1$. Moreover, $\overline{C}:(0,\infty) \rightarrow (0,\infty)$ is a continuous, strictly positive function. Except when explicitly stated, assumptions \hyperref[eq:M1]{(M)}, \hyperref[eq:Hpp bd]{(H)}, \hyperref[g increasing]{(G)}, and \hyperref[eq:ellipticity]{(E)} will be in place throughout the paper. 

\begin{itemize}
\item[(M)] (Assumptions on $m_{0}$ and $m_T$ for classical solutions) The given functions $m_{0}$ and $m_T$ satisfy
\begin{equation}
\tag{M1} m_{0},m_T\in C^{2,\alpha}(\mathbb{T}),\;m_{0},m_T>0,\text{ and }\int_{\mathbb{T}}m_{0}=\int_{\mathbb{T}}m_T=1.\label{eq:M1}
\end{equation}

\item[(H)]  (Assumptions on $H$) The functions $H$, $H_p$, and $H_{pp}$
are in $C^4(\mathbb{R} \times (0,\infty))$, and $H_{m}<0$.
Moreover, for $(p,m)\in \mathbb{R}\times(0,\infty)$,
\begin{equation}
\tag{H1}\frac{1}{C_0}(1+|p|)^{\gamma-2}\leq H_{pp}\leq \overline{C}(m)(1+|p|)^{\gamma-2},\label{eq:Hpp bd}
\end{equation}
\begin{equation}
\tag{H2}pH_p\geq (1+\frac{1}{C_0})H -\overline{C}(m),\label{eq:H superlinear growth}    \end{equation}
\begin{equation}
\tag{H3}|H_{ppp}|\leq \overline{C}(m)(1+|p|)^{\gamma-3},\label{eq:Hppp bd}    \end{equation}
\begin{equation}
\tag{HM1} |H_{m}|\leq \overline{C}(m)(1+|p|)^\gamma,\label{eq:Hm growth}
\end{equation}

\begin{equation}
\tag{HM2}m|H_{mm}|\leq-\overline{C}(m)H_{m},\;\;\;|p|\|H_{mp}|\leq -\overline{C}(m)H_{m},\;\;\;m|p||H_{mmp}|\leq -\overline{C}(m) H_{m},\label{eq:Hm bd}
\end{equation}
\begin{equation}
\tag{HM3} |H_{mpp}|\leq \overline{C}(m)(1+|p|)^{\gamma-2} \label{eq:Hmpp bd}
\end{equation}
\item[(G)] (Assumptions on $g$) The function $g:(0,\infty)\rightarrow\mathbb{R}$
is four times continuously differentiable and satisfies, for all $m>0$, \begin{equation} \tag{G1} \label{g increasing}
    g'(m)>0.
\end{equation}

\item[(E)] (Ellipticity of the system) The function $H$ satisfies, for $m>0$, the condition
\begin{equation}
\tag{E1}-4mH_{m}H_{pp}\geq\left(1+\frac{1}{C_{0}}\right)m^2H_{mp}^2.\label{eq:ellipticity}
\end{equation}
\item[(W)](Assumptions on $m_0$, $m_T$, $H$, and $g$ for weak solutions) The functions $m_0$ and $m_T$ satisfy, for some $\kappa>0$, 
\begin{equation} \tag{MW} \label{eq:mweak} m_0,\,m_T\in L^{\infty}(\T),\,\, m_0,\,m_T\geq 0,\,\, \intT m_0=\intT m_T=1, \text{ and }\frac{1}{m_0^{\kappa}},\frac{1}{m_T^{\kappa}}\in L^1(\T),\end{equation}
$H$ satisfies, for some constant $s\in (-\kappa-1,\kappa-1)$, and for $(p,m)\in \mathbb{R}\times (0,\frac{1}{C_0})$,
\begin{equation} \label{eq:Hm growth weak}\tag{HW}-H_m(0,m)\leq C_0 m^{s},\, -H_m(p,m)\geq \frac{1}{C_0}m^{s}, \end{equation}
and $g$ satisfies
\begin{equation} \label{eq:g(0)condition}\tag{GW}\lim_{m\rightarrow 0^+}g(m)>-\infty. \end{equation}
\item[(L)](Assumption on $H$ for the long time average) The function $H$ satisfies, for $(p,m)\in \mathbb{R}\times(0,\infty) $,
\begin{equation} \label{eq:H long time}\tag{HL} -mH_m(p,m)\geq \frac{1}{\overline{C}(m)}.\end{equation}
\end{itemize}

\section{Displacement convexity and estimates on the density} \label{SectionDisplacement}
To obtain estimates for the density at interior times, we will prove an integral formula which, in particular, implies that the quantity
\[\int_{\T}h(m(x,\cdot))dx\]
is a convex function in $[0,T]$ whenever $h$ is convex, provided that (\ref{UniquenessCondition}) holds.
\begin{prop}\label{DisplacementProp}

Let $(u,m)\in  C^{2}(\overline{Q}_T)\times C^{1}(\overline{Q}_T)$ be a classical solution to
\begin{equation} \label{MFGSystemNoTerminalCondition}
    \begin{cases}-u_t+H(u_x,m)=0, & \text{ in }Q_{T}\\
        m_t-(mH_p(u_x,m))_x=0, & \text{ in }Q_{T}\\
        m(\cdot,0)=m_0, & \text{ in }\T,   \end{cases}
\end{equation}
and let $h \in W^{2,\infty}(\R)$. Then 

\begin{multline} \label{DisplacementFormula}
   \frac{d^2}{dt^2} \int_{\T} h(m(x,t))dx=\intT h''(m)\Big(m_t-m_x(H_p+\frac{m}{2}H_{pm})\Big)^2dx\\ -\intT h''(m)(m_x)^2\Big(\frac{m^2}{4}H_{pm}^2+mH_{pp}H_m \Big)dx.
\end{multline}
Moreover, there exists $C=C(C_0)$ such that, if $h''>0$,
\[ \label{DisplacementInequality}
   \frac{d^2}{dt^2} \int_{\T} h(m(x,t))dx\geq \frac{1}{C}\intT h''(m) (- mH_mH_{pp}m_x^2+m^2H_{pp}^2u_{xx}^2)dx.
\]
\end{prop}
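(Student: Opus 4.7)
The plan is to establish the identity by direct computation, differentiating $F(t):=\intT h(m(x,t))\,dx$ twice in $t$ and applying both equations of \eqref{MFGSystemNoTerminalCondition}, and then to derive the inequality from the identity via a quadratic-form estimate based on the ellipticity condition \eqref{eq:ellipticity}.

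For the identity, I would begin from
\[F'(t)=\intT h'(m)\,m_t\,dx,\qquad F''(t)=\intT h''(m)\,m_t^2\,dx+\intT h'(m)\,m_{tt}\,dx.\]
The continuity equation gives $m_{tt}=\bigl((mH_p)_t\bigr)_x$. Expanding $(mH_p)_t=m_t(H_p+mH_{pm})+mH_{pp}u_{xt}$, using the identity $u_{xt}=H_pu_{xx}+H_m m_x$ obtained by differentiating the Hamilton--Jacobi equation in $x$, and then integrating by parts in $x$ using periodicity, yields
\[F''(t)=\intT h''(m)\bigl[m_t^2-m_tm_x(H_p+mH_{pm})-mH_pH_{pp}u_{xx}m_x-mH_mH_{pp}m_x^2\bigr]\,dx.\]
A second use of the continuity equation in the form $mH_{pp}u_{xx}=m_t-m_x(H_p+mH_{pm})$ eliminates $u_{xx}$. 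Regrouping the resulting $m_tm_x$ terms symmetrically around the midpoint $H_p+\tfrac{m}{2}H_{pm}$ produces the square that appears in \eqref{DisplacementFormula}, while the remaining $m_x^2$ terms assemble into the second summand.

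For the inequality, the same substitution supplies the appealing identity
\[m_t-m_x\bigl(H_p+\tfrac{m}{2}H_{pm}\bigr)=mH_{pp}u_{xx}+\tfrac{m}{2}H_{pm}m_x,\]
so the integrand of \eqref{DisplacementFormula} collapses to the quadratic form
\[m^2H_{pp}^2u_{xx}^2+m^2H_{pp}H_{pm}u_{xx}m_x-mH_{pp}H_m m_x^2\]
in $(u_{xx},m_x)$. The target lower bound is the diagonal part $m^2H_{pp}^2u_{xx}^2-mH_mH_{pp}m_x^2$, and I would absorb the cross term by Young's inequality with a small parameter $\epsilon$. Hypothesis \eqref{eq:ellipticity} rewrites as $m^2H_{pm}^2\leq \tfrac{4C_0}{C_0+1}(-mH_mH_{pp})$, which, for any $\epsilon$ strictly between $\tfrac{C_0}{C_0+1}$ and $1$, ensures that both diagonal coefficients obtained after absorption are bounded below by a positive constant (depending only on $C_0$) times those of the target form. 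This yields the claimed inequality.

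No step is conceptually difficult; the main obstacle is careful algebraic bookkeeping. The nontrivial observation is that the factor of $\tfrac{1}{2}$ in the combination $H_p+\tfrac{m}{2}H_{pm}$ arises not directly from the continuity equation (which gives $H_p+mH_{pm}$) but from symmetrizing the mixed $m_tm_x$ term when completing the square. Once this is done, the ellipticity hypothesis \eqref{eq:ellipticity} is precisely what is needed to bound the cross term in the resulting quadratic form.
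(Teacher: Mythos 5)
Your proposal is correct, and it follows a genuinely different (and in my view cleaner) route than the paper. For the identity, the paper does not differentiate $F(t)=\intT h(m)$ directly; instead it tests the continuity equation against the auxiliary quantity $\pt g(m)-(g(m)H_p)_x$ for a generic profile $g$, expands into six families of terms $A_1,\dots,A_6$, regroups them, and only at the very end identifies the relation $g(m)=h'(m)-h(m)/m$, $g'(m)+g(m)/m=h''(m)$ that converts the result into $\frac{d^2}{dt^2}\intT h(m)$. Your direct computation of $F''(t)$ followed by the substitution $mH_{pp}u_{xx}=m_t-m_x(H_p+mH_{pm})$ and symmetrization of the cross term around $H_p+\tfrac{m}{2}H_{pm}$ reaches the same formula with much less bookkeeping; the observation you highlight about where the factor $\tfrac12$ comes from is exactly the right one. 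For the inequality, the paper stays in the variables $(m_t-m_xH_p,\,m_x)$, writes $1+\tfrac1{C_0}=(1-r)^{-1}$ and extracts $r(m_t-m_xH_p)^2$ after completing a square, while you collapse the integrand to the explicit quadratic form $m^2H_{pp}^2u_{xx}^2+m^2H_{pp}H_{pm}u_{xx}m_x-mH_{pp}H_m m_x^2$ in $(u_{xx},m_x)$ and then absorb the cross term by Young's inequality using \eqref{eq:ellipticity}. Your variant is more transparent and, in fact, avoids a small issue in the paper's argument: the paper asserts $(m_t-m_xH_p)^2=m^2H_{pp}^2u_{xx}^2$ ``by the equation of $m$,'' which is not literally true since $m_t-m_xH_p=mH_{pp}u_{xx}+mH_{pm}m_x$; the conclusion can still be reached but the justification as stated is inaccurate, whereas your quadratic-form argument in $(u_{xx},m_x)$ sidesteps this entirely. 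Your choice of $\epsilon$ strictly between $C_0/(C_0+1)$ and $1$ indeed makes both diagonal coefficients positive and dependent only on $C_0$, as required.
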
 
\begin{proof}
Let $g:\R\rightarrow \R$, be a smooth function. Since $m$ satisfies the continuity equation, the following holds for each $t\in [0,T]$:
\begin{equation} \label{displacement1}
    \intT \Big(m_t(x,t)-(m(x,t)H_p(u_x,m(x,t)))_x\Big)\Big(\pt g(m(x,t)) -(g(m(x,t))H_p(u_x,m(x,t)))_x\Big)dx=0 .
\end{equation}
Expanding equation (\ref{displacement1}), we obtain 
\begin{multline*}
    0=\intT (m_t-m_x(H_p+mH_{pm})-mH_{pp}u_{xx})(g'(m)m_t -m_x(g'(m)H_p+g(m)H_{pm})-g(m)H_{pp}u_{xx})dx\\
    =\intT g'(m)(m_t)^2-m_tm_x\Big[2g'(m)H_p+\Big(g'(m)m+g(m)\Big)H_{pm}\Big]\\
    +m_xH_{pp}u_{xx}\Big[H_p\Big(g'(m)m+g(m)\Big)+2g(m)mH_{pm}\Big]\\
    +m_x^2\Big[\Big(H_p+mH_{pm}\Big)\Big(g'(m)H_p+g(m)H_{pm}\Big)\Big]\\
    -m_tH_{pp}u_{xx}\Big[H_p\Big(g'(m)m+g(m)\Big)+2g(m)mH_{pm}\Big]\\
    +g(m)m\Big(H_{pp}u_{xx}\Big)^2dx
    =A_1-A_2+A_3+A_4-A_5+A_6.
\end{multline*}
We split term $A_3$ as follows
\[A_3=\intT m_xH_{pp}H_pu_{xx}\Big( g'(m)m+g(m)\Big) dx+2\intT g(m)m_x
mH_{pm}
H_{pp}u_{xx}dx=A_{3.1}+A_{3.2}.\]
From the continuity equation, we have that 
\[mH_{pp}u_{xx}=m_t -m_x(H_p+mH_{pm}).\]
Hence, terms $A_{3.2}$ and $A_6$ can be written as
\[A_{3.2}=2\intT m_tm_x
H_{pm}
g(m) dx-2\intT (m_x)^2
H_{pm}
\Big(g(m)H_p+mg(m)H_{pm}\Big)dx=A_{3.2.1}-A_{3.2.2}\]
\[A_6=\intT \frac{g(m)}{m}\Big[m_t-m_x\Big(H_p+mH_{pm}\Big)\Big]^2dx\]
\[=\intT\frac{g(m)}{m}(m_t)^2-2\frac{g(m)}{m}m_t m_x\Big(H_p+mH_{pm}\Big)+\frac{g(m)}{m}(m_x)^2\Big(H_p+mH_{pm}\Big)^2dx=A_{6.1}-A_{6.2}+A_{6.3}.\]
From the Hamilton-Jacobi (HJ for short) equation, we have that 
\[H_{p}u_{xx}=u_{xt}-H_mm_x.\]
Therefore, $A_{3.1}$ may be written as
\[A_{3.1}=\intT m_xH_{pp}u_{xt}\Big(g'(m)m+g(m) \Big) dx-\intT (m_x)^2H_{pp}H_m\Big(g'(m)m+g(m) \Big) dx=A_{3.1.1}-A_{3.1.2}\]
We now begin by grouping together terms $A_5$, and $A_{3.1.1}$, which yields, for $L(m)=g(m)m,\,\, L'(m)=g(m)+mg'(m)$,
\begin{align*}
    -A_5+A_{3.1.1}=&\intT m_x\Big(g(m)+mg'(m)\Big)H_{pp}u_{xt} -\Big(g(m)+mg'(m)\Big)m_t H_{pp}u_{xx}dx\\
    =&\intT -\pt(L(m))(H_p)_x+L'(m)m_t H_{pm}m_x+(L(m))_x\pt(H_p)-L'(m)m_xm_t H_{pm}dx \\
    =&\intT \pt((L(m))_x)H_p+(L(m))\pt (H_p)dx=\frac{d}{dt} \intT (L(m))_xH_pdx,
\end{align*}
Next, we group together all the terms with $m_t m_x$ factor, namely $A_{2}$, $A_{3.2.1}$, and $A_{6.2}$, which yields
\[-A_2+A_{3.2.1}-A_{6.2}=-\intT 2m_t m_x \left(g'(m)+\frac{g(m)}{m}\right)\left(H_p+\frac{m}{2}H_{pm}\right)dx.\]
Collecting the terms involving $(m_t)^2$, namely terms $A_{1}$ and $A_{6.1}$, we obtain
\[A_1+A_{6.1}=\intT (m_t)^2\left(g'(m)+\frac{g(m)}{m}\right)dx.\]
Finally, we group together the terms involving $m_x^2$, namely $A_{4}$, $A_{3.2.2}$, $A_{6.3}$, and $A_{3.1.2}$: 
\begin{multline*}
    A_4-A_{3.2.2}+A_{6.3}-A_{3.1.2}=\\
    \intT (m_x)^2\Big[\Big(g'(m)+\frac{g(m)}{m}\Big)\Big(H_p+\frac{m}{2}H_{pm}\Big)^2\Big]dx\\
    -\intT (m_x)^2\Big[\Big(g'(m)+\frac{g(m)}{m}\Big)\Big(\frac{m^2}{4}H_{pm}^2+mH_{pp}H_m\Big)\Big]dx.
\end{multline*}
Thus, putting everything together, we obtain 
\begin{multline}-\frac{d}{dt} \intT (L(m))_xH_p dx=\intT \Big(g'(m)+\frac{g(m)}{m}\Big)\left(m_t -m_x\Big(H_p+\frac{m}{2}H_{pm}\Big)\right)^2dx \\
-\intT m_x^2\left(g'(m)+\frac{g(m)}{m}\right)\Big(\frac{m^2}{4}H_{pm}^2+mH_{pp}H_m\Big)dx.\end{multline}
Next, notice that for a smooth function $h:\R\rightarrow \R$, we have
\[\frac{d}{dt} \intT h(m)dx=\intT (h(m))_xH_p+mh'(m)(H_p)_xdx=\intT (h(m)-h'(m)m)_x H_{p}dx.\]
Thus, if we require that 
\[-L(m)=h(m)-h'(m)m,\]
we obtain 
\[-\frac{d}{dt}\intT (L(m))_x H_pdx=\frac{d^2}{dt^2}\intT h(m)dx.\]
The relation between $h,g$ is
\[mg(m)=h'(m)m-h(m),\]
therefore
\[g(m)=-\frac{h(m)}{m}+h'(m),\]
and, thus,
\[g'(m)+\frac{g(m)}{m}=-\frac{h'(m)}{m}+\frac{h(m)}{m^2}+h''(m)-\frac{h(m)}{m^2}+\frac{h'(m)}{m}=h''(m),\]
from which \eqref{DisplacementFormula} follows.\\
Now, setting $r=1-\frac{1}{1+C_0^{-1}},$ we have
\[-\frac{m^2}{2}H_{pm}^2-mH_mH_{pp}=-\frac{m^2}{2}H_{pm}^2-(1-r)mH_mH_{pp}-rmH_mH_{pp},\]
and so, applying \hyperref[eq:ellipticity]{(E)}, and multiplying by $h''(m)m_x^2$, \eqref{DisplacementFormula} yields
\begin{equation}\label{eq: displ 1} \frac{d^2}{dt^2} \int_{\T} h(m(x,t))dx\geq \int_{\T}-rh''(m)mH_mH_{pp}m_x^2.\end{equation}
On the other hand, we infer from \hyperref[eq:ellipticity]{(E)} that
\begin{multline}\Big(m_t-m_x(H_p+\frac{m}{2}H_{pm})\Big)^2-m_x^2\left(\frac{m^2}{2}H_{pm}^2+mH_mH_{pp}\right) \\ \geq \Big(m_t-m_xH_p-\frac{m_xm}{2}H_{pm}\Big)^2 + \frac{1}{C_0}\left(\frac{m_xm}{2}H_{pm}\right)^2
=(m_t-m_xH_p)^2-2(m_t-m_xH_p)\frac{m_xm}{2}H_{pm}\\+(1-r)^{-1}\left(\frac{m_xm}{2}H_{pm}\right)^2=r(m_t-m_xH_p)^2+\left((1-r)^{\frac{1}{2}}(m_t-m_xH_p)-(1-r)^{-\frac{1}{2}}\frac{m_xm}{2}H_{pm}\right)^2\\
\geq r(m_t-m_xH_p)^2=rm^2H_{pp}^2u_{xx}^2.\end{multline}
where the last equality follows from the equation of $m.$ As before, multiplying by $h''(m)$ then yields 
\begin{equation} \label{eq: displ 2}\frac{d^2}{dt^2} \int_{\T} h(m(x,t))dx\geq \int_{\T} rh''(m)m^2H_{pp}^2u_{xx}^2. \end{equation}
Combining \eqref{eq: displ 1} and \eqref{eq: displ 2}, we conclude that \eqref{DisplacementInequality} holds.
\end{proof}
It now follows readily that the density of the solution is bounded above and below in terms of the initial and terminal densities. 
\begin{cor}\label{DisplacementConvexityCorBounds}
Let $(u,m)\in C^{2}(\overline{Q}_T)\times C^{1}(\overline{Q}_T)$ be a classical solution to (\ref{MainMFGSystem}) or (\ref{PlanningSystem}). Then, if $c_1:=\min(\min m_0,\min m(\cdot,T))$, $C_1=\max(\max{m_0},\max{m(\cdot,T)})$, we obtain that 
\begin{equation} \label{eq:APriorim} c_1\leq m(x,t)\leq C_1, \text{ for all }(x,t)\in \overline{Q}_T.\end{equation}
\end{cor}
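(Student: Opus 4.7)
The plan is to apply Proposition \ref{DisplacementProp} with carefully chosen convex test functions $h$, and then exploit the resulting convexity of the map
\[\varphi(t) := \int_{\T} h(m(x,t))\,dx\]
on $[0,T]$ in the spirit of a maximum principle applied to the boundary values at $t=0$ and $t=T$.

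First I would verify that, for any convex $h \in W^{2,\infty}(\R)$, the function $\varphi$ is convex on $[0,T]$. The first term on the right-hand side of \eqref{DisplacementFormula} is manifestly non-negative, since $h''\geq 0$ and the integrand carries a square. For the second term, the ellipticity condition \hyperref[eq:ellipticity]{(E1)} gives the pointwise inequality
\[\frac{m^2}{4}H_{pm}^2 + mH_{pp}H_m \,\leq\, -\frac{1}{4C_0}\,m^2 H_{pm}^2 \,\leq\, 0,\]
so that the minus sign in front of the second integral renders its contribution non-negative as well. Hence $\varphi''\geq 0$ in $(0,T)$, and by continuity of $\varphi$ on $[0,T]$ the map is convex on the closed interval.

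For the upper bound, I would take $h_+ \in W^{2,\infty}(\R)$ to be a non-negative convex function that agrees with $s\mapsto (s-C_1)_+^2$ on a neighbourhood of the range of $m$ (a bounded interval, since $m\in C(\overline{Q_T})$), extended affinely outside so as to preserve the $W^{2,\infty}$ bound. Because $m_0(x)\leq C_1$ and $m(x,T)\leq C_1$ for every $x\in\T$, one has $h_+(m_0)\equiv 0$ and $h_+(m(\cdot,T))\equiv 0$, so $\varphi(0)=\varphi(T)=0$. Convexity of $\varphi$ then forces $\varphi(t)\leq (1-t/T)\varphi(0)+(t/T)\varphi(T)=0$ on $[0,T]$; combined with $h_+\geq 0$, this gives $h_+(m(x,t))\equiv 0$, which is precisely $m(x,t)\leq C_1$.

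For the lower bound, I would run the identical argument with the already $W^{2,\infty}$ convex function $h_-(s)=(c_1-s)_+^2$, whose values on $m_0$ and $m(\cdot,T)$ vanish by the definition of $c_1$, and conclude $m(x,t)\geq c_1$ on $\overline{Q_T}$. The only (quite mild) obstacle is arranging the truncation for $h_+$ to fit into the $W^{2,\infty}(\R)$ hypothesis of Proposition \ref{DisplacementProp}; beyond that, the argument is a direct consequence of the displacement convexity formula.
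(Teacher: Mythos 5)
Your argument is correct and relies on the same key ingredient the paper uses: Proposition \ref{DisplacementProp} and the resulting convexity of $t\mapsto\int_{\T}h(m(x,t))\,dx$ for convex $h$. The difference lies in the choice of test function and the downstream argument. The paper takes $h_p(m)=m^p$, deduces $\int m^p(t)\le\max\left(\int m_0^p,\int m(T)^p\right)$ from convexity, and passes to the limits $p\to\pm\infty$ to recover the pointwise $L^\infty$ bounds as a limit of $L^p$-type quantities. You instead pick a ``barrier'' $h_\pm$ that vanishes identically on the boundary data, so that $\varphi(0)=\varphi(T)=0$; convexity plus $h_\pm\ge 0$ then forces $\varphi\equiv 0$ directly, giving the pointwise bound without any limiting argument. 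Your route is slightly more self-contained and makes the $L^\infty$ conclusion immediate. One small inaccuracy: you describe $h_-(s)=(c_1-s)_+^2$ as ``already $W^{2,\infty}$,'' but it grows quadratically as $s\to-\infty$ and so is not in $W^{2,\infty}(\R)$ any more than $h_+$ is; it needs the same innocuous truncation outside the (compact, positive) range of $m$. Note, however, that the paper's $m^p$ is not globally $W^{2,\infty}(\R)$ either, so this truncation step is implicit in both arguments and equally harmless given $m\in C(\overline{Q_T})$ with $m>0$.
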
 
\begin{proof}
The proof follows directly from Proposition \ref{DisplacementProp} above. Indeed, note that, in view of \hyperref[eq:ellipticity]{(E)}, for any convex function $h$, the map
\[C(t):=\intT h(m(x,t))dx\]
is convex, and thus
\[C(t)\leq \max (C(0),C(T)), \text{ for all }t\in [0,T].\]
Hence, setting $h_p(m)=m^p$ and letting $p\rightarrow -\infty$ yields the result for the lower bound, whereas letting $p\rightarrow +\infty$ yields the upper bound.
\end{proof}
\begin{rmk}
For dimensions $d>1$, formula (\ref{DisplacementFormula}) is no longer true. If one repeats the same argument, the issue will arise at the term $A_{6.2}$. However, in the case of a separated Hamiltonian, i.e. $H(p,m)\equiv H(p)-f(m)$, one still obtains the weaker formula
\begin{multline}\frac{d^2}{dt^2}\int_{\T} h(m(x,t))dx=\int_{\T} ((h''(m)m^2-h'(m)m+h(m))(\text{tr}(D^2_{pp}HD_{xx}^2u))^2\\
+(h'(m)m-h(m))\text{tr}((D^2_{pp}HD_{xx}^2u)^2)+h''(m)mf'(m)|Dm|^2)dx.\end{multline}
In this higher-dimensional setting, it is no longer true that the left hand side is convex whenever $h$ is convex. In particular, the statement is false for negative powers of $m$, but true for positive powers. Thus, from the proof of Corollary \ref{DisplacementConvexityCorBounds} we see that the upper bound on $m$ still holds (see \cite{Gomes}). 
\end{rmk}

\section{Estimates on the solution and the terminal density} \label{SectionAPrioriBounds}
In this section we obtain the necessary a priori $L^{\infty}-$bounds on $u$, $Du$, and $m(\cdot,T)$ for solutions to both (\ref{MainMFGSystem}) and (\ref{PlanningSystem}). Combined with the results of the previous section, this will yield global upper and lower bounds on the density. In order to treat the setting of Theorem \ref{LocalTheorem}, where the density may vanish at $\{0,T\}$, we also obtain $L^\infty$-bounds on $u$ that do not depend on the quantities $(\min m_0)^{-1}$, $(\min m_T)^{-1}$.
\begin{prop} \label{mAnduBounds}
Let $(u,m)\in C^{2}(\overline{Q}_T)\times C^{1}(\overline{Q}_T)$ be a classical solution to (\ref{MainMFGSystem}), and let $c_1=\min m_0, C_1=\max{m_0}$. Then, for each $(x,t)\in \overline{Q}_T,$
\begin{equation} \label{mTbounds} c_1\leq m(x,T)\leq C_1 \text{ for all }x\in \T,\end{equation}
\begin{equation} \label{uboundsMFG1}H(0,c_1)(t-T)+g(c_1)\leq u(x,t)\leq H(0,C_1)(t-T)+g(C_1) \text{ for all }(x,t)\in \overline{Q}_T,\end{equation}
and
\begin{equation} \label{uboundsMFG2}-\int_{t}^T H(0,\min_{\T}(m(\cdot,s))ds+g(c_1)\leq u(x,t) \leq-\int_{t}^T H(0,\max_{\T}(m(\cdot,s))ds+g(C_1).\end{equation}
\end{prop}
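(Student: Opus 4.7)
The strategy is to prove \eqref{uboundsMFG1} first by a comparison argument against the explicit constant-in-space solutions $u_M(x,t):= g(M)+H(0,M)(t-T)$, which correspond to $m\equiv M$. From \eqref{uboundsMFG1} the terminal bound \eqref{mTbounds} follows instantly by evaluating at $t=T$ and inverting the strictly monotone function $g$. The finer bound \eqref{uboundsMFG2} will come from an envelope argument applied to $\overline u(t):=\max_{x}u(x,t)$ and $\underline u(t):=\min_{x}u(x,t)$, using the Hamilton--Jacobi equation at the spatial extrema.

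To carry out the comparison, I would set $\phi:=u-u_{C_1}$ and note that, since $u_{C_1}$ is independent of $x$ and affine in $t$, $D^{2}u_{C_1}\equiv 0$, so subtracting $Qu_{C_1}=0$ from $Qu=0$ gives the linear elliptic equation
\[
-\operatorname{tr}\bigl(A(Du)\,D^{2}\phi\bigr)=0 \qquad \text{in } Q_T,
\]
with $A(Du)$ uniformly positive definite by \hyperref[eq:ellipticity]{(E)}. The strong maximum principle then places any positive maximum of $\phi$ on $\T\times\{0,T\}$, and I would use Hopf's lemma to rule out each piece. At a maximizer on $\T\times\{0\}$, periodicity in $x$ forces $u_x=0$, and the HJ boundary condition then gives $\phi_t=H(0,m_0)-H(0,C_1)\geq 0$ (since $H_m<0$ and $m_0\leq C_1$), incompatible with the strict Hopf inequality $\phi_t<0$. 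At a maximizer $(x_1,T)$ on $\T\times\{T\}$, differentiating the terminal condition $u(\cdot,T)=g(m(\cdot,T))$ together with $u_x(x_1,T)=0$ yields $m_x(x_1,T)=0$, while $\phi(x_1,T)>0$ forces $m(x_1,T)>C_1$, hence $\phi_t=H(0,m(x_1,T))-H(0,C_1)<0$, contradicting Hopf's $\phi_t>0$. The borderline case where $\phi$ is a positive constant reduces, via the boundary identities, to the trivial scenario $m_0\equiv C_1$. Thus $u\leq u_{C_1}$, and the analogous argument applied to $u_{c_1}-u$ gives $u\geq u_{c_1}$, yielding \eqref{uboundsMFG1}; specializing to $t=T$ and using $u(x,T)=g(m(x,T))$ together with the strict monotonicity of $g$ produces \eqref{mTbounds}.

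For \eqref{uboundsMFG2}, I would observe that $\overline u$ is Lipschitz on $[0,T]$ since $u\in C^{2}(\overline{Q_T})$. By Danskin's theorem, at any point of differentiability and any maximizer $x^{*}(t)$, $\overline u'(t)=u_t(x^{*}(t),t)$. Periodicity forces $u_x(x^{*}(t),t)=0$, so the HJ equation gives $u_t(x^{*}(t),t)=H(0,m(x^{*}(t),t))$; using $H_m<0$ and $m(x^{*}(t),t)\leq \max_{x}m(\cdot,t)$, one obtains
\[
\overline u'(t)\geq H(0,\max_{x\in\T}m(x,t)) \quad \text{for a.e.\ } t\in [0,T].
\]
Integrating from $t$ to $T$ and using $\overline u(T)=g(\max_{x}m(\cdot,T))\leq g(C_1)$ (from \eqref{mTbounds}) gives the upper bound in \eqref{uboundsMFG2}. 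The lower bound follows from the symmetric argument applied to $\underline u(t)$.

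The principal difficulty is the Hopf-lemma step: the quasilinear structure, combined with the nonlinear oblique boundary condition at $t=T$ which couples $u$ to $m$ through $g$, forces one to differentiate the boundary relations carefully and match the sign of $\phi_t$ extracted from them against the strict inequality provided by Hopf. Once this matching is arranged so that the contradiction is strict rather than a mere equality, everything else reduces to a routine envelope calculation.
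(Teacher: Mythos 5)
Your proof is correct but follows a genuinely different route from the paper's. For \eqref{uboundsMFG1}, the paper does \emph{not} use Hopf's lemma; it perturbs the affine barrier by $\tfrac{\epsilon}{2}(t-T)^{2}$ (and by a small shift $\delta$ in the density argument) so that the comparison function becomes a \emph{strict} super/subsolution of the quasilinear operator, and then argues by contradiction at a global extremum, handling the two boundary pieces $t=0,T$ by sign inspection of $-u_t+H$ and of the terminal condition, without invoking boundary-point lemmas. Your version uses the exact affine comparison function $u_{C_1}$ with no perturbation and extracts the needed strict sign from the Hopf boundary-point lemma; this is fine because the solution is assumed $C^2(\overline{Q_T})$, $m$ is automatically bounded away from zero by continuity, and (E) makes $A(Du)$ uniformly elliptic on $\overline{Q_T}$, so all hypotheses of the strong maximum principle and Hopf hold. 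The paper's $\epsilon$-perturbation is the more robust trick (it does not need Hopf and would survive lower regularity), but yours is cleaner in the classical setting. The small step where you derive $m_x(x_1,T)=0$ from the terminal condition is not used anywhere in the contradiction and can be dropped. For \eqref{uboundsMFG2}, the paper again uses a barrier, namely $w(t)=-\int_t^T H(0,\min_\T m(\cdot,s))\,ds+g(c_1)$ together with the auxiliary function $v-\epsilon t$, showing that the minimum of $v-\epsilon t$ must be at $t=T$; you instead run an envelope (Danskin-type) argument on $\overline{u}(t)=\max_x u(x,t)$ and $\underline{u}(t)=\min_x u(x,t)$, using Lipschitz-ness and a.e.\ differentiation followed by integration. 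Both are standard and correct; your envelope approach is arguably more direct, while the paper's barrier argument avoids any appeal to a.e.\ differentiability. In short: same conclusion, different (and equally valid) machinery at both key steps.
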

\begin{proof}
We will only show the lower bounds, since the argument for the upper bounds is completely symmetrical. We fix $\delta >0$ and let $\epsilon>0$ be such that 
\[H(0,c_1)-H(0,c_1-\delta)<-\epsilon T, \text{ for all }x\in \T.\]
We define
\[w^{\epsilon,\delta}(t):={H(0,c_1-\delta)}(t-T)+\frac{\epsilon}{2}(t-T)^2+g(c_1-\delta),\]
and note that
\[w_{xx}=0, w_{x,t}=0, w_{tt}=\epsilon.\]
The function $v^{\epsilon,\delta}(x,t):=u(x,t)-w^{\epsilon,\delta}(t)$ has a minimum at some $(x_0,t_0)\in \overline{Q}_T$. If we first assume that $t_0\in (0,T)$, then it follows that
\[ D^2u-D^2w^{\epsilon,\delta}\geq 0,\]
which, in view of \eqref{eq:quasilinear}, implies
\[0=-\text{Tr}(AD^2u)\leq -\text{Tr}(AD^2w^{\epsilon,\delta})=-\epsilon<0,\]
a contradiction. On the other hand, assume that $t_0=0$. Then,
\[u_t(x_0,0)\geq  w^{\epsilon,\delta}_t(x_0,0),\, \, u_x(x_0,0)= w_x^{\epsilon,\delta}(0)=0,\]
and thus
\[0=-u_t(x_0,t_0)+H(0,m_0(x_0))\leq - w^{\epsilon,\delta}_t(0)+H(0,m_0(x_0))=-H(0,c_0-\delta)+H(0,m_0(x_0))+\epsilon T\]
\[\leq -H(0,c_1-\delta)+H(0,c_1)+\epsilon T<0,\]
by our choice of $\epsilon$, which is a contradiction. Hence, the minimum must be achieved at $t_0=T$. At that point, we have
\[u_t(x_0,T)\leq  w^{\epsilon,\delta}_t(T),\,\,  u_x(x_0,T)=w_x^{\epsilon,\delta}(T)=0.\]
Consequently, 
\[u(x_0,T)=g(H^{-1}(0,u_t(x_0,T)))\geq g(H^{-1}(0, w^{\epsilon,\delta}_t(T)))=g(H^{-1}(0,H(0,c_1-\delta)))\]
\[=g(c_1-\delta)=w^{\epsilon,\delta}(T).\]
We have thus shown that 
\[u(x,t)\geq w^{\epsilon,\delta}(t) ,\text{ for all }(x,t)\in \overline{Q}_T.\]
Letting $\epsilon\rightarrow 0$, and then $\delta\rightarrow 0$, yields the lower bound in \eqref{uboundsMFG1}
In particular, for $t=T$, we have
\[g(m(x,T))\geq g(c_1) \text{ for all } x \text{ in }\T,\]
which proves the lower bound in \eqref{mTbounds}. Now, we define 
\[w(t)=-\int_t^T H(0,c(s))ds+g(c_1),\]
where $c(s):=\min\limits_{\T}\{m(\cdot,s)\}$ is the running minimum of the density. We observe that the function $v(x,t)=u(x,t)-w(t)$ satisfies $v_t=u_t-H(0,c(t))$, $v_x=u_x$. Thus, for any $\epsilon>0$, at any extremum point of $v-\epsilon t$, one has $v_t=H(0,m)-H(0,c(t))-\epsilon<0$. Letting $\epsilon \rightarrow 0$ thus implies that $v$ achieves its minimum at $t=T$. Therefore, using \eqref{mTbounds}, we obtain
\[u(x,t)-w(t)\geq \min_{\T}g(m(\cdot,T))-g(c_1)\geq 0,\]
and this is precisely the lower bound in \eqref{uboundsMFG2}.
\end{proof}
Now, for solutions to (\ref{PlanningSystem}), we do not need to estimate the terminal density, as it is part of the given data. Concerning $u$, since the solution is only unique up to a constant, we may only bound the oscillation of $u$, and this is done in the following proposition.

\begin{prop}\label{OscillationProp} Let $(u,m)\in C^{2}(\overline{Q}_T)\times C^{1}(\overline{Q}_T)$ solve (\ref{MFGSystemNoTerminalCondition}). There exists a constant $C>0$, with \[C=C\left(C_0, \int_{0}^T |H(0,\min_{\T}m(\cdot,s))|ds, \overline{C}(\max\limits_{\overline{Q}_T} m)\right),\]
such that
\[\text{osc}_{\overline{Q}_T} u\leq C(T+T^{-\frac{1}{\gamma-1}}+\int_0^T|H(0,\min_{\T}m(\cdot,s)|ds).\]
\end{prop}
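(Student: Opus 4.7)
The plan is to combine a one-sided Hopf--Lax-type variational inequality along a straight-line path with a running-extremum Lipschitz bound at the extrema of $u(\cdot,t)$.

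First, set $\psi(t):=\max_{\T}u(\cdot,t)$ and $\phi(t):=\min_{\T}u(\cdot,t)$, both Lipschitz by the smoothness of $u$. At a point $x^{*}(t)$ attaining the maximum one has $u_{x}(x^{*},t)=0$, hence by the HJ equation $\psi'(t)=u_{t}(x^{*},t)=H(0,m(x^{*},t))$ almost everywhere, and likewise for $\phi'$. Since $H_m<0$, these are sandwiched between $H(0,\max_{\T}m(\cdot,t))$ and $H(0,\min_{\T}m(\cdot,t))$. Mass conservation forces $\max_{\T}m(\cdot,t)\geq 1$, so combining \hyperref[eq:H superlinear growth]{(H2)} at $p=0$ (which gives $H(0,m)\leq \overline{C}(m)$) with the continuity of $H(0,\cdot)$ on $[1,\max_{\overline{Q_T}}m]$ yields
\[
|\psi'(t)|,\,|\phi'(t)|\leq |H(0,\min_{\T}m(\cdot,t))|+C_{1}, \qquad C_{1}=C_{1}(\overline{C}(\max_{\overline{Q_T}}m)).
\]
Integrating gives $\max_{[0,T]}\psi\leq \psi(0)+A$ and $\min_{[0,T]}\phi\geq \phi(T)-A$, with $A:=\int_{0}^{T}|H(0,\min_{\T}m)|ds+C_{1}T$.

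Next, I derive a variational inequality. For any Lipschitz path $X:[0,T]\to \T$, Fenchel's inequality $pw\leq H(p,m)+L(w,m)$, where $L(\cdot,m)$ is the Legendre dual of $H(\cdot,m)$ in $p$, applied at $w=-\dot X$, $p=u_{x}$, gives
\[
\frac{d}{ds}u(X(s),s)=H(u_{x},m)+u_{x}\dot X\geq -L(-\dot X(s),m(X(s),s)),
\]
and hence by integration,
\[
u(X(0),0)-u(X(T),T)\leq \int_{0}^{T}L(-\dot X(s),m(X(s),s))\,ds.
\]
From \hyperref[eq:Hpp bd]{(H1)}--\hyperref[eq:H superlinear growth]{(H2)} one derives the pointwise estimate $L(v,m)\leq C_{2}(1+|v|^{\gamma/(\gamma-1)})+C_{2}|H(0,m)|$. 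Choosing $X$ the constant-speed straight-line path from a point $x_{0}$ achieving $\psi(0)$ to a point $y_{T}$ achieving $\phi(T)$, so that $|\dot X|\leq 1/T$, and using $|H(0,m(X,s))|\leq |H(0,\min_{\T}m(\cdot,s))|+C_{1}$ (by the monotonicity of $H$ in $m$ together with \hyperref[eq:H superlinear growth]{(H2)}), I obtain, since $1-\gamma/(\gamma-1)=-1/(\gamma-1)$,
\[
\psi(0)-\phi(T)\leq C\bigl(T+T^{-1/(\gamma-1)}+\int_{0}^{T}|H(0,\min_{\T}m)|\,ds\bigr).
\]

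Assembling the two estimates,
\[
\text{osc}_{\overline{Q_T}}u=\max_{[0,T]}\psi-\min_{[0,T]}\phi\leq (\psi(0)-\phi(T))+2A\leq C\bigl(T+T^{-1/(\gamma-1)}+\int_{0}^{T}|H(0,\min_{\T}m)|\,ds\bigr),
\]
which is the desired bound. The main technical obstacle is deriving the pointwise Legendre-transform estimate on $L(v,m)$: this requires integrating the strong convexity bound on $H_{pp}$ from \hyperref[eq:Hpp bd]{(H1)} twice to produce a lower bound of the form $H(p,m)\geq H(0,m)+H_{p}(0,m)p+c|p|^{\gamma}-C$, and then controlling $|H_{p}(0,m)|$ by $|H(0,m)|+\overline{C}(m)$ using \hyperref[eq:H superlinear growth]{(H2)} at $p=\pm 1$. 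The $m$-dependence is delicate because there is no uniform lower bound on $m$; however, the monotonicity property $\partial_{m}L=-H_{m}>0$ together with the pointwise bound on $|H(0,m)|$ ensures that the singular behavior as $m\to 0$ enters the final estimate only through $\int_{0}^{T}|H(0,\min_{\T}m)|\,ds$, possibly at the cost of higher-power terms $|H(0,m)|^{\gamma/(\gamma-1)}$ that must be reduced to linear integrals via a Hölder-type argument (accounting for the stated dependence of $C$ on $\int_{0}^{T}|H(0,\min_{\T}m)|\,ds$).
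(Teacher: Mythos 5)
Your proposal is, at its core, the same argument as the paper's, just phrased dually. Your step 1, bounding the running spatial extrema $\psi(t)=\max_\T u(\cdot,t)$ and $\phi(t)=\min_\T u(\cdot,t)$ via $u_x=0$ at critical points and $H_m<0$, is exactly the paper's maximum-principle step: setting $w(t)=-\int_t^T H(0,\min_\T m(\cdot,s))\,ds$, they show that $u-w$ attains its global max over $\overline{Q}_T$ at $t=0$ and its global min at $t=T$, which unpacked is precisely $\psi(t)-\int_0^t H(0,\min_\T m)\,ds\leq\psi(0)$ and the analogous inequality for $\phi$. Your step 2, the path--Legendre inequality $u(X(0),0)-u(X(T),T)\leq\int_0^T L(-\dot X,m)\,ds$ applied to a constant-speed geodesic, is the dynamic-programming proof of the bound the paper gets from the explicit Hopf--Lax supersolution and the comparison principle. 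Same content, different presentation.

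There are, however, two places where your write-up does not close. First, the two-sided estimate $|\psi'(t)|\leq|H(0,\min_\T m(\cdot,t))|+C_1$ is not justified: the missing side is a \emph{lower} bound on $\psi'(t)\geq H(0,\max_\T m(\cdot,t))$, i.e.\ an upper bound on $|H(0,\max_{\overline{Q_T}} m)|$, and neither (H1) nor (H2) furnishes a lower bound on $H(0,m)$ for large $m$; continuity of $H(0,\cdot)$ bounds it by a number, but not one controlled by $\overline{C}(\max m)$. Fortunately you never need it: the one-sided bounds $\psi'(t),\phi'(t)\leq H(0,\min_\T m(\cdot,t))$ already give $\max_{[0,T]}\psi\leq\psi(0)+\int_0^T|H(0,\min_\T m)|$ and $\min_{[0,T]}\phi\geq\phi(T)-\int_0^T|H(0,\min_\T m)|$, which is all the assembly step uses. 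Second, and more substantively, when you carry out the Legendre computation you in fact get $L(v,m)\leq|H(0,m)|+C|H_p(0,m)|^{\gamma'}+C(1+|v|^{\gamma'})$, and your own (H2)-at-$p=\pm1$ argument gives $|H_p(0,m)|\leq C(|H(0,m)|+\overline{C}(m))$, so the path integrand contains $|H(0,m(X(s),s))|^{\gamma'}$, not $|H(0,m(X(s),s))|$. The proposed ``Hölder-type'' reduction of $\int_0^T|H(0,\cdot)|^{\gamma'}$ to a function of $\int_0^T|H(0,\cdot)|$ does not exist --- Jensen goes the wrong way. The correct repair is exactly what the paper does: first establish the pointwise bound $H(p,m)\geq\tfrac1C|p|^\gamma-C$, i.e.\ $L(v,m)\leq C(1+|v|^{\gamma'})$, with $C$ a \emph{single constant} depending only on the range of $m$ and the constants in (H), and only then run the Hopf--Lax/duality argument with the pure-$p$ Hamiltonian, so that no $m(X(s),s)$ appears along the path; the integral $\int_0^T|H(0,\min_\T m)|$ then enters only through step 1. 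Your parenthetical remark about the monotonicity $\partial_m L=-H_m>0$ is the right lever here --- replace $m(X(s),s)$ by $\max_{\overline{Q_T}}m$ inside $L$ before integrating --- but notice this makes the duality-step constant depend pointwise on $|H(0,\max_{\overline{Q_T}}m)|$ and $|H_p(0,\max_{\overline{Q_T}}m)|$, which is indeed how the proposition is invoked downstream (e.g.\ in Lemma \ref{UniformBoundsOnUepsilon}), not through $\int|H(0,\min m)|$ alone.
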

\begin{proof}
We define the functions $c$ and $w$, for $t\in[0,T]$, by
\[c(t)=\min_{\T}m(\cdot,t),\,\,\,w(t)=-\int_t^T H(0,c(s))ds.\]
Arguing as in the proof of \eqref{uboundsMFG2}, we obtain
\begin{equation} \label{1}\max_{\overline{Q}_T}(u-w)=\max_{\T}\left( (u(\cdot,0)-w(0)\right),\,\min_{\overline{Q}_T}(u-w)=\min_{\T}\left(u(\cdot,T)-w(T)\right).\end{equation}
Now, in view of (\ref{eq:Hpp bd}) and Proposition \ref{mAnduBounds},
$0=-u_t+H(u_x,m)\geq -u_t+\frac{1}{C}|u_x|^{\gamma}-C.$
Next, we define $\gamma'$ by $\frac{1}{\gamma}+\frac{1}{\gamma'}=1$. By the Hopf-Lax formula, the function
\[v(x,t)=\min_{y\in\mathbb{R}}\Big(\left(\frac{C}{\gamma}\right)^{\frac{\gamma'}{\gamma}}(T-t)\frac{|x-y|^{\gamma'}}{\gamma'(T-t)^{\gamma'}}+C(T-t)+u(y,T)\Big)\]
then solves, in $\overline{Q}_T$,
 \[-v_t(x,t)+\frac{1}{C}|v_x|^{\gamma}-C=0,\, \,
         v(\cdot,T)=u(\cdot,T),
\]
and, thus, by the comparison principle,
    \[u\leq v.\]
On the other hand, up to increasing the constant $C$,
\[
    v(x,0)\leq \frac{C}{ T^{\gamma'-1}}+CT+\min_{\T} u(\cdot, T),
\]
and so
\[\max_{\T} u(\cdot, 0)\leq \max_{\T}v(\cdot,0)\leq \frac{C}{ T^{\gamma'-1}}+CT+\min_{\T} u(\cdot, T).\]
In view of (\ref{1}), we obtain
\[\text{osc}_{\overline{Q}_T} (u-w)\leq \frac{C}{ T^{\gamma'-1}}+CT+w(T)-w(0), \]
and, thus,
\[\text{osc}_{\overline{Q}_T} u \leq \frac{C}{ T^{\gamma'-1}}+CT+2\cdot\text{osc}_{\overline{Q}_T}w \leq \frac{C}{ T^{\gamma'-1}} +CT+2 \int_0^T |H(0,c(s))|ds. \]\end{proof}

We finally obtain a priori estimates on the gradient of $u$, while simultaneously treating the case of (\ref{MainMFGSystem}) and (\ref{PlanningSystem}). The proof closely follows \cite[Lem. 3.8]{Munoz1} and \cite[Lem 3.3]{Munoz2}, but yields a slightly stronger estimate due to the $d=1$ assumption (see (\ref{uxx bd})). For the purpose of studying the long time behavior, we will keep track of the dependence of $T$ for large values of $T$.
\begin{prop}\label{GradientEstimateProp} Let $(u,m)\in C^{3}(\overline{Q}_T)\times C^{2}(\overline{Q}_T)$ be a classical solution to (\ref{MainMFGSystem}) or (\ref{PlanningSystem}). There exists a constant $C>0$, with
\begin{multline*}
    C=C\Big(C_0,T,T^{-1},\text{osc }u,\gamma,\|m\|_{L^{\infty}(\overline{Q}_T)},\|m^{-1}\|_{L^{\infty}(\overline{Q}_T)},\\
    \|(m_0)_x\|_{L^\infty(\T)},\|(m_T)_x\|_{L^\infty(\T)},\|\overline{C}\|_{L^\infty[\min m,\max m]}\Big)
\end{multline*}
such that
\[\|Du\|_{L^{\infty}(\overline{Q}_T)}^{\gamma}\leq C.\]
\end{prop}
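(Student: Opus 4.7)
The plan is to establish the estimate by a Bernstein-type argument applied to the quasilinear elliptic formulation $Qu=0$ from \eqref{eq:quasilinear}. Since the HJ equation determines $u_t$ algebraically from $u_x$ and $m$, and since $m$ is already uniformly bounded above and below on $\overline{Q_T}$ by Corollary \ref{DisplacementConvexityCorBounds} together with Proposition \ref{mAnduBounds}, it suffices to produce an $L^\infty$ bound on $u_x$. To this end, I would study the auxiliary function $w(x,t)=\phi(u(x,t))(1+u_x^2)$, where $\phi(z)=e^{\beta(M-z)}$, $M$ is an upper bound on $u$ supplied by Propositions \ref{mAnduBounds} and \ref{OscillationProp}, and $\beta>0$ is to be chosen large later.

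Differentiating $Qu=0$ once in $x$ yields a linear elliptic equation for $v:=u_x$, whose ellipticity constants are controlled by assumption \hyperref[eq:ellipticity]{(E)} together with the $L^\infty$ bounds on $m$. At an interior maximum of $w$, standard Bernstein manipulations, combined with \hyperref[eq:Hpp bd]{(H1)}, \hyperref[eq:Hppp bd]{(H3)}, \hyperref[eq:Hm bd]{(HM2)}, and \hyperref[eq:Hmpp bd]{(HM3)}, produce a coercive term scaling like $(1+u_x^2)^{\gamma/2}$ that originates in the convexity of $H$ in $p$. All lower-order contributions grow no faster than $(1+|u_x|)^{\gamma-1}$ and appear multiplied by a factor $\phi'(u)/\phi(u)=-\beta$; taking $\beta$ large depending on $C_0$, on $\overline{C}$ evaluated on $[\min m,\max m]$, and on the bounds on $m$, these are absorbed and $|u_x|$ is controlled at any interior maximum of $w$.

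The boundary contributions are treated separately on the two faces. On $\{t=0\}$, the HJ equation $u_t=H(u_x,m_0)$ relates, upon differentiation in $x$, the tangential derivative $u_{tx}$ to $u_x$, $u_{xx}$, and $(m_0)_x$; the normal derivative of $w$ at a boundary maximum then yields, via a Hopf-type barrier argument, an estimate for $|u_x|$ in terms of $\|(m_0)_x\|_{L^\infty}$. On $\{t=T\}$ the planning case is symmetric with $m_T$ in place of $m_0$. In the MFG case, the terminal condition $u(\cdot,T)=g(m(\cdot,T))$ forces $u_x(\cdot,T)=g'(m(\cdot,T))\,m_x(\cdot,T)$, and a parallel Bernstein argument for $m_x^2$ on the terminal slice, using $g'>0$ and \hyperref[eq:ellipticity]{(E)}, closes the estimate.

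The principal difficulty, already present in \cite{Munoz1,Munoz2}, is precisely this coupled boundary analysis at $t=T$ for the MFG system, where $u_x$ and $m_x$ must be estimated simultaneously on the terminal slice and each bound is used to obtain the other. The one-dimensional setting simplifies the structure of the matrix $A(Du)$ in \eqref{eq:matrix-1} enough to permit a slightly sharper terminal estimate than in the higher-dimensional references, while the joint appearance of $T$ and $T^{-1}$ in the constant is inherited from the oscillation bound of Proposition \ref{OscillationProp}.
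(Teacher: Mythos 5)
Your overall plan — reduce to an $L^\infty$ bound on $u_x$ via a Bernstein argument on the quasilinear elliptic equation, then split into interior and boundary cases — matches the paper's strategy, but the specific ansatz you chose does not close at the terminal boundary, and this is not a cosmetic difference.

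The paper uses the additive Bernstein function $v=\tfrac12 u_x^2+\tfrac{k}{2}\tilde u^2$, where $\tilde u = u-\min u+1-\tfrac{\text{osc}\,u+2}{T}(T-t)$ is engineered so that $\tilde u(\cdot,0)\leq -1$ while $\tilde u(\cdot,T)\geq 1$, and $k=\|u_x\|^{3/2}$. The sign flip of $\tilde u$ between the two faces is precisely what makes the boundary argument work on both. At a maximum of $v$ on $\{t=\theta\}$, one applies the linearized HJ operator $T_u=-\partial_t+H_p\partial_x$: the coercive contribution is $k\tilde u\,T_u\tilde u$, with $T_u\tilde u\gtrsim |p|^\gamma$ always positive for large $|p|$, and $\tilde u$ has the sign that makes $k\tilde u\,T_u\tilde u$ contradict the constraint $T_uv\geq 0$ at $t=0$ (since $\tilde u<0$) and $T_uv\leq 0$ at $t=T$ (since $\tilde u>0$). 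Your multiplicative weight $w=\phi(u)(1+u_x^2)$ with $\phi(z)=e^{\beta(M-z)}$ has $\phi'/\phi\equiv-\beta<0$ everywhere. Computing $T_uw$ at a boundary maximum with $w_x=0$, the coercive term is $\phi'(u)(1+u_x^2)\,T_u(u)=-\beta\phi(1+u_x^2)(-H+H_pu_x)$, and since $-H+H_pp\gtrsim|p|^\gamma$ for large $|p|$ by \hyperref[eq:H superlinear growth]{(H2)}, this term is \emph{always negative}. At $t=0$ the constraint is $T_uw\geq 0$, so a large negative $T_uw$ produces the desired contradiction. But at $t=T$ the constraint is $T_uw\leq 0$, and your coercive term only makes $T_uw$ more negative — there is no contradiction, and the boundary case at $t=T$ is left open for both the planning and the MFG system. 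A single monotone $\phi$ cannot supply the needed sign reversal; this is exactly what $\tilde u$ is built to do.

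Relatedly, your proposal for the MFG boundary at $t=T$ — ``a parallel Bernstein argument for $m_x^2$ on the terminal slice'' — misses the single-line observation that closes this case in the paper: since $u_x(\cdot,T)=g'(m(\cdot,T))m_x(\cdot,T)$, the boundary term $-H_m u_x m_x = -\tfrac{H_m}{g'}|u_x|^2$ is strictly \emph{positive} (as $H_m<0$, $g'>0$), hence a favorable contribution that can simply be dropped; no separate estimate on $m_x$ is needed. Finally, your interior exponent count (``lower-order contributions grow no faster than $(1+|u_x|)^{\gamma-1}$'') does not match the actual structure: the coefficients of $A(Du)$ in \eqref{eq:matrix-1} grow polynomially in $|u_x|$ with high exponents, and the paper's error terms $E_1,\dots,E_4$ are of order $|p|^{2\gamma+1}$, dominated by a coercive term of order $|p|^{2\gamma+3/2}$; the choice $k=\|u_x\|^{3/2}$ is tuned precisely to win this half-power. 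Your sketch does not account for this bookkeeping, nor for the choice of $\beta$ being compatible across all three cases.
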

\begin{proof}
Since $u_t=H(u_x,m)$, and $m$ is bounded above and below, we infer from \eqref{eq:Hpp bd} and \eqref{eq:H superlinear growth} that it is enough to show that \[||u_x||_{L^{\infty}(\overline{Q}_T)}\leq CT^2.\]
We let
\[
\tilde{u}=u-\min u +1-\frac{(\text{osc }u+2)}{T}(T-t),
\]
and note that the function $\tilde{u}$ has been constructed to satisfy
\[
|\tilde{u}|\leq 1+\text{osc }u,\quad\tilde{u}(\cdot,0)\leq-1,\;\tilde{u}(\cdot,T)\geq1.
\]
Define
\[v(x,t)=\frac{1}{2}u_x^2+\frac{k}{2} \tilde{u}^2,\]
where $k=\|u_x\|^\frac{3}{2}_{\overline{Q}_T}$. Let $(x_0,t_0)\in \overline{Q}_T$ be a point where $v$ achieves its maximum value. With no loss of generality, we may assume that $p=u_x(x_0,t_0)$ satisfies \[|p|\geq1,\; |p|^2\geq \frac{1}{2}\|u_x\|^2.\]\\
We remark here that throughout the proof, the constant $C$ is subject to increase from line to line. \\
\textbf{Case 1:\;}$t_0=T$. For this case we consider the linearization of the HJ equation,
\[T_{u}v=-v_{t}+H_p(u_x,m)v_x.\]
Since $v_x=0$ and $v_t\geq0,$
\begin{multline} \label{grad1}
0\geq T_{u}v=  T_{u}\left(\frac{1}{2}|u_x|^{2}\right)+k\tilde{u}(-\tilde{u}_{t}+H_p u_x)\\
=  -H_{m}u_xm_x+k\tilde{u}(-u_{t}+H_p p-C) \geq -H_m u_x m_x +k\tilde{u}(\frac{1}{C_0}H)-C k \tilde {u} \\
\geq -H_m u_x m_x +k\tilde{u}\frac{1}{C_0}\left(\frac{1}{\overline{C}(m)}|p|^{\gamma}-\overline{C}(m)\right)-C |p|^{\frac{3}{2}}
\geq  -H_m u_x m_x +\frac{1}{C}|p|^{\gamma+\frac{3}{2}}-  C|p|^{\frac{3}{2}}.
\end{multline}
If $(u,m)$ solves (\ref{MainMFGSystem}), then 
\[-H_m u_x m_x=-\frac{H_m}{g'}|p|^2>0.\] On the other hand, if $(u,m)$ solves (\ref{PlanningSystem}), then \begin{equation} \label{grad2} |-H_m u_x m_x|\leq C\|(m_T)_x\|_{\infty}|p|^{\gamma+1}.\end{equation}
In either case, (\ref{grad1}) then implies 
\[|p|\leq C.\]
\textbf{Case 2:\;}$t_0=0$. Regardless of whether $(u,m)$ solves (\ref{MainMFGSystem}) or (\ref{PlanningSystem}), this case is dealt with in the same way as was done for $t_0=T$ when $(u,m)$ solved (\ref{PlanningSystem}), because, in view of \ref{eq:Hm bd}, we then have the bound
\[|-H_m u_x m_x|\leq C\|(m_0)_x\|_{\infty}|p|^{\gamma+1}.\]
\textbf{Case 3: \;}$0<t_0<T$. We first observe that, since $v_x=0$, we have
\[u_x u_{xx}=-k\tilde{u}u_x,\]
and, thus,
\begin{equation} \label{uxx bd}|u_{xx}|\leq C k. \end{equation}
We consider the linearization of (\ref{eq:quasilinear}), namely
\[L_u(w)=-\text{Tr}(A(Du)D^2w)-D_q\text{Tr}(A(Du)D^2u)\cdot Dw.\]
Through direct computation, using (\ref{eq:matrix-1}), one obtains
\begin{equation} \label{gradientproof1}
    L_u\left(\frac{1}{2}u_x^2\right)=-\left|-u_{xt}+\left(H_p+\frac{1}{2}mH_{mp}\right)u_{xx}\right|^2+\frac{1}{4} m^2H_{mp}^2u_{xx}^2-mH_mH_{pp} u_{xx}^2,
\end{equation}
and
\begin{equation} \label{gradientproof2}  L_u \left( k \frac{1}{2}\tilde{u}^2 \right)=-k\left|-\tilde{u}_{t}+(H_p+\frac{1}{2}mH_{mp})u_{x}\right|^2+k\frac{1}{4} m^2H_{mp}^2u_{x}^2-kmH_mH_{pp} u_{x}^2+E_1+E_2+E_3+E_4,\end{equation}
where
\[E_1=2\left(-u_{xt}+\left(H_p+\frac{1}{2}mH_{mp}\right)u_{xx}\right)\left(H_{pp}+\frac{1}{2}mH_{mpp}\right)k\tilde{u}u_x, \]
\[E_2=\left( \frac{1}{2}H_{mp}H_{mpp}+mH_{mp}H_{pp}+mH_mH_{ppp}\right)u_{xx}k\tilde{u}u_x,\]
\[E_3=\left(-u_{xt}+\left(H_p+\frac{1}{2}mH_{mp}\right)u_{xx}\right)\frac{2}{H_m}\left(H_{pm}+\frac{1}{2}\left(mH_{mmp}+H_{mp}\right)\right)k\tilde{u}(-\tilde{u}_t+H_p u_x)\]
\begin{multline*}
    E_4=\\
    \frac{1}{H_m}\left(\frac{1}{2}(mH_{mp}^2+m^2H_{mp}H_{mmp})+mH_{mm}H_{pp}+mH_mH_{mpp}+H_mH_{pp} \right)u_{xx}k\tilde{u}(-\tilde{u}_t+H_p u_x).
\end{multline*}
Now we estimate each of the $E_i$. By Young's inequality, we obtain
\[|E_1|\leq \frac{1}{4}\left|-u_{xt}+\left(H_p+\frac{1}{2}mH_{mp}\right)u_{xx}\right|^2  +C|H_{pp}+\frac{1}{2}mH_{mpp}|^2 k^2u_x^2\tilde{u}^2.
\]
As a result of (\ref{eq:Hpp bd}), and (\ref{eq:Hmpp bd}), we thus obtain
\begin{equation} \label{E1Bound}|E_1|\leq \frac{1}{4}\left|-u_{xt}+\left(H_p+\frac{1}{2}mH_{mp}\right)\right|^2 + C|p|^{2\gamma+1}.\end{equation}
Next, to estimate $|E_2|$, we use (\ref{uxx bd}), (\ref{eq:Hpp bd}) (\ref{eq:Hppp bd}), (\ref{eq:Hm growth}), (\ref{eq:Hm bd}), and (\ref{eq:Hmpp bd}) to obtain
\begin{equation} \label{E2Bound}|E_2|\leq C|p|^{2\gamma+1}.\end{equation}
For $E_3$, we have
\begin{multline}|E_3|
\leq \frac{1}{4}\left|-u_{xt}+\left(H_p+\frac{1}{2}mH_{mp}\right)u_{xx}\right|^2+ \frac{Ck^2}{H_m^2}\left(H_{pm}^2+m^2H_{mmp}^2+H_{mp}^2\right)|-\tilde{u}_t+H_p u_x|^2, 
\end{multline}
and therefore, in view of (\ref{eq:Hpp bd}) and (\ref{eq:Hm bd}), as well as the HJ equation, we obtain
\begin{equation} \label{E3Bound}|E_3|\leq \frac{1}{4}\left|-u_{xt}+(H_p+\frac{1}{2}mH_{mp})u_{xx}\right|^2+C|p|^{2\gamma+1}. \end{equation}
Finally, for $E_4$, we observe that (\ref{uxx bd}), \eqref{eq:Hpp bd}, (\ref{eq:Hm bd}), and (\ref{eq:Hmpp bd}) yield
\begin{equation} \label{E4Bound}|E_4|\leq C|p|^{2\gamma+1}.\end{equation}
Now, \hyperref[eq:ellipticity]{(E)} implies that
\begin{multline}
    \left|-\tilde{u}_{t}+(H_p+\frac{1}{2}mH_{mp})u_{x}\right|^2-\frac{1}{4} m^2H_{mp}^2p^2+mH_mH_{pp} p^2 \\
    \geq \left|-\tilde{u}_{t}+(H_p+\frac{1}{2}mH_{mp})u_{x}\right|^2+\frac{1}{4C_0} m^2H_{mp}^2p^2=\left|-\tilde{u}_{t}+\left(H_pu_x+\frac{1}{2}mH_{mp}\right)p\right|^2\\
    +\frac{1}{C_0} \left(\frac{1}{2}mH_{mp}p\right)^2
    \geq \frac{1}{2}\left|-\tilde{u}_{t}+\left(H_p+\frac{1}{2}mH_{mp}\right)u_{x}\right|^2 + \frac{1}{C} |-\tilde{u}_{t}+H_p u_{x}|^2.  \end{multline}
So, as a result of (\ref{gradientproof2}), \eqref{eq:Hpp bd} and (\ref{eq:H superlinear growth}), we get
\begin{equation} \label{gradientproof3} L_u \left( k \frac{1}{2}\tilde{u}^2 \right) \leq  -\frac{1}{2}k\left|-\tilde{u}_{t}+\left(H_p+\frac{1}{2}mH_{mp}\right)u_{x}\right|^2 - \frac{1}{C} |p|^{2\gamma+\frac{3}{2}} +E_1+E_2+E_3+E_4. \end{equation} 
Now, since $(x_0,t_0)$ is an interior maximum point of $v$, we have $L_u(v)\geq0$. Thus, combining (\ref{E1Bound}), (\ref{E2Bound}), (\ref{E3Bound}), (\ref{E4Bound}), (\ref{gradientproof1}) and (\ref{gradientproof3}), we conclude
\[0\leq -\frac{1}{C}|p|^{2\gamma+\frac{3}{2}}+C|p|^{2\gamma+1},\]
which implies
\[|p|\leq C.\]
\end{proof}

\subsection{Estimates for MFG with \texorpdfstring{$\epsilon$}---penalized terminal condition} \label{SubsectionEpsilonPenalized}
 In order to obtain classical solutions to (\ref{PlanningSystem}), it will be necessary to use a natural approximation method, which was previously used in \cite{Porretta} to obtain weak solutions to the second-order planning problem. The solution will be obtained as the limit of solutions to standard MFG systems with a penalized terminal condition.  Specifically, we will need to prove estimates for solutions $(\ue,\me)$ to
\begin{equation}\tag{MFG$_{\epsilon}$}
\label{MFGepsilon}
    \begin{cases}
     -\ue _t+H(\ue_x,\me)=0 \text{ in }Q_T,\\
     \me _t-(\me H_p(\ue_x,\me))_x=0 \text{ in }Q_T,\\
      \me(x,0)=m_0(x),\,\epsilon \ue(x,T)=\me(x,T)-m_T(x) \text{ on }\partial Q_T.
    \end{cases} \end{equation}
As long as $\ue$ is bounded in $L^{\infty}(Q_T)$, the limit is expected to solve (\ref{PlanningSystem}). This estimate is obtained in the following lemma. While treating this system, we will temporarily assume that $H(0,0)$ is finite. This assumption will be removed in the proof of Theorem \ref{MainTheorem}.

\begin{lem}\label{UniformBoundsOnUepsilon}
For $\epsilon>0$, let $(\ue,\me)\in C^2(\overline{Q_T})\times C^1(\overline{Q_T})$ be a classical solution to system (\ref{MFGepsilon}), and set $c_1=\min\{\min_{\T}{m_0},\min_{\T}{m_T}\}$, $C_1=\max\{\max_{\T}{m_0},\max_{\T}{m_T}\}$. Assume that $H(0,0)<\infty$.  Then there exists a constant $C>0$, independent of $\epsilon$, such that 
\begin{equation} \label{ueBound} \|\ue \|_{L^{\infty}(\overline{Q_T})}\leq C.\end{equation}
Furthermore, for all $\epsilon<\frac{1}{C}$, we have 
\begin{equation}
    \label{LowerBoundmPlanning}
    \frac{c_1}{2}\leq \me(x,t)\leq 2C_1 \text{ for all }(x,t)\in \overline{Q_T},
\end{equation}
and
\begin{equation} \label{me(T)Bound}
   \|\me(T,\cdot)-m_T(\cdot)\|_{\infty}\leq \epsilon C. 
\end{equation}
\end{lem}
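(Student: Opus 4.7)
The starting point is the observation that, by integrating the penalized terminal condition over $\T$ and using mass conservation for the continuity equation (so that $\intT \me(\cdot,t)\,dx=\intT m_0=\intT m_T=1$ for all $t$), one obtains
\[
\intT \ue(x,T)\,dx \;=\; \frac{1}{\epsilon}\left(\intT \me(x,T)\,dx-\intT m_T(x)\,dx\right) \;=\; 0.
\]
In particular $\min_{\T}\ue(\cdot,T)\le 0\le \max_{\T}\ue(\cdot,T)$, and hence $\min_{\overline{Q_T}}\ue\le 0\le \max_{\overline{Q_T}}\ue$, which gives $\|\ue\|_{L^{\infty}(\overline{Q_T})}\le \text{osc}_{\overline{Q_T}}\ue$. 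It therefore suffices to bound the oscillation of $\ue$ uniformly in $\epsilon$.

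The oscillation bound will come from Proposition \ref{OscillationProp}, applied under the tentative hypothesis that $\me$ remains in $[c_1/2,2C_1]$ throughout $\overline{Q_T}$. Under this hypothesis, the assumption $H(0,0)<\infty$ combined with $H_m<0$ makes $H(0,\cdot)$ bounded on $(0,2C_1]$, so $\int_0^T|H(0,\min_{\T}\me(\cdot,s))|\,ds\le C(H,c_1,C_1)\,T$, while $\overline{C}(\max_{\overline{Q_T}}\me)\le \overline{C}(2C_1)$ is controlled. Proposition \ref{OscillationProp} then yields $\text{osc}_{\overline{Q_T}}\ue\le C_0$ for some $C_0=C_0(H,T,c_1,C_1)$ independent of $\epsilon$, and combined with the mean-zero property this gives \eqref{ueBound} with $C=C_0$.

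To verify the hypothesis on $\me$, observe that the terminal condition gives $\me(x,T)=m_T(x)+\epsilon \ue(x,T)\in [c_1-\epsilon\|\ue\|_\infty,\,C_1+\epsilon\|\ue\|_\infty]$. Given the a priori bound $\|\ue\|_\infty\le C_0$, for $\epsilon\le \min(c_1,C_1)/(2C_0)$ we have $\me(\cdot,T)\in[c_1/2,2C_1]$, and Corollary \ref{DisplacementConvexityCorBounds} propagates these bounds from the boundary data $m_0$ and $\me(\cdot,T)$ to all of $\overline{Q_T}$, which gives \eqref{LowerBoundmPlanning}. The mutual consistency of the $\ue$ and $\me$ estimates is closed by a continuation argument in $\epsilon$: the set of $\epsilon>0$ for which both $\me\in[c_1/2,2C_1]$ and $\|\ue\|_\infty\le C_0$ hold is open (from the strict margin in the bootstrap) and closed (from continuous dependence of classical solutions on $\epsilon$), yielding the bounds on an interval $(0,\epsilon_*)$ with $\epsilon_*=1/C$ after absorbing constants into $C$. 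Finally, \eqref{me(T)Bound} is immediate from the terminal condition: $\|\me(T,\cdot)-m_T(\cdot)\|_\infty=\epsilon\|\ue(\cdot,T)\|_\infty\le \epsilon C$.

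The main technical obstacle is the inherent circularity between the $\ue$ and $\me$ estimates: the oscillation bound on $\ue$ requires $\me$ to be bounded above and away from zero, while the lower bound on $\me$ in turn requires a uniform bound on $\ue$ through the terminal condition. This is overcome by the self-improving nature of the combined estimate together with the smallness of $\epsilon$, which ensures that the deviation $\me(T)-m_T=\epsilon \ue(T)$ is small relative to the target range $[c_1/2,2C_1]$ for $\me$.
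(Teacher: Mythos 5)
Your proposal captures the right external ingredients — Proposition \ref{OscillationProp}, displacement convexity (Corollary \ref{DisplacementConvexityCorBounds}), mass conservation giving $\int_{\T}\ue(\cdot,T)=0$, and the role of $H(0,0)<\infty$ in controlling $|H(0,\min\me)|$ without a lower bound on the density — but the argument has a genuine gap at its center. You treat the mutual dependence between the oscillation bound on $\ue$ and the upper bound on $\me$ as an irreducible circularity and try to break it with a continuation argument in $\epsilon$. That argument is not justified: the lemma is an a priori estimate for an arbitrary given classical solution, so there is no canonical family $(\ue,\me)_\epsilon$ depending continuously on $\epsilon$; you have no base point $\epsilon$ at which the tentative bounds are known to hold; and openness of the good set would require continuous dependence (and implicitly uniqueness) of solutions on $\epsilon$, which is neither established at this stage of the paper nor supplied by you.

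The paper avoids the circularity entirely by proving an \emph{unconditional} upper bound on $\me(\cdot,T)$, one that does not reference $\|\ue\|_\infty$ or require smallness of $\epsilon$. The mechanism is a maximum-principle comparison: for $M_0=\max_{\T}m_0$, the function $v(x,t)=\ue(x,t)+H(0,M_0)(T-t)$ still solves the elliptic equation \eqref{eq:quasilinear} (since $D^2v=D^2\ue$), and the boundary condition at $t=0$ forces its maximum onto $t=T$. At a maximizer $x_0$ of $\ue(\cdot,T)$ one reads off $\me(x_0,T)\le M_0$, and then the penalized terminal condition $\epsilon\ue(x,T)=\me(x,T)-m_T(x)$ yields, for \emph{every} $x$, $\me(x,T)\le M_0+\text{osc}_{\T}m_T$. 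Only after this unconditional bound (together with $H(0,0)<\infty$) does Proposition \ref{OscillationProp} give an $\epsilon$-independent oscillation bound, from which the lower bound on $\me(\cdot,T)$ and estimate \eqref{me(T)Bound} follow for $\epsilon<1/C$ exactly as you describe. So your closing steps are fine; what is missing is the key observation that the upper density bound is not conditional and hence no bootstrap is needed.
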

\begin{proof}
As a result of Proposition \ref{OscillationProp}, since $H(0,\min\limits_{\overline{Q}_T}\me)\leq H(0,0)$, there exists \[C=C(C_0,T,|H(0,0)|,|H(0,\max\limits_{\overline{Q}_T}\me)|,\overline{C}(\max\limits_{\overline{Q}_T}\me))\] such that
\[\text{osc}_{\overline{Q}_T}(\ue)\leq C.\]
To make this bound on the oscillation independent of $\epsilon$, we must obtain upper bounds on the density $\me$. Note that, from Corollary \ref{DisplacementConvexityCorBounds}, it is enough to bound $\me(T,\cdot)$ from above. To this end, let $M_0:=\max\limits_{\T}m_0$ and, for $\delta>0$, define 
\[v^{\delta}(x,t)=\ue(x,t)+H(0,M_0+\delta)(T-t).\]
Since $D^2v^{\delta}=D^2\ue$, we have that $v^{\delta}$ also solves the elliptic equation (\ref{eq:quasilinear}) in $Q_T$. Therefore, the maximum of $v^{\delta}$, must occur at $t=0$ or $t=T$. If the maximum occurred at $t=0$, then at that point
\[\ue _t -H(0,M_0)=v_t^{\delta}\leq 0,\, v_x^{\delta}=\ue_x=0,\]
and, hence,
\[0\geq \ue _t -H(0,M_0+\delta)=H(0,m_0)-H(0,M_0+\delta),\]
which is a contradiction because $H_m<0$. Therefore, for every $\delta>0$, the maximum occurs at $t=T$, and, letting $\delta\rightarrow 0$, we see that the same is true for $\delta=0$. The maximum value of $v(x,t):=\ue(x,t) +H(0,M_0)(T-t)$ equals the maximum of $\ue (x,T)$, since $v(x,T)=\ue(x,T)$. Letting $x_0\in \T$ be a point at which this maximum occurs, it follows that $v_t(x_0,T)\geq 0$, and therefore 
\[ H(0,\me(x_0,T))\geq H(0,M_0),\]
which implies that
\[\me(x_0,T)\leq M_0.\]
But, since 
\[\epsilon \ue(x,T)=\me(x,T)-m_T(x),\]
we obtain, for each $x\in \T$,
\[\epsilon \ue(x,T)\leq \epsilon \ue(x_0,T)= (\me(x_0,T)-m_T(x_0))\leq  (M_0-m_T(x_0)),\]
and, consequently,
\[\me(x,T)=\epsilon \ue(x,T)+m_T(x)\leq M_0+m_T(x)-m_T(x_0)\leq M_0+\text{osc}_{\T}(m_T).\]
We have thus shown that the bound on the oscillation of $\ue$ does not depend on $\epsilon$. Furthermore, since 
\[\epsilon\ue (x,T)=\me(x,T)-m_T(x),\]
and $\me(T,\cdot),m_T(\cdot)$ are both probability densities, we have $\int_{\T} \ue(\cdot,T)=0$, so there must exist some $x^{\epsilon}\in \T$ such that
\[\ue(x^{\epsilon},T)=0.\]
This implies that, for any $(x,t)\in \overline{Q}_T$, 
\[-\text{osc}_{\overline{Q}_T}(\ue)\leq \ue(x,t)-\ue(x^{\epsilon},T)\leq \text{osc}_{\overline{Q}_T}(\ue),\]
which shows (\ref{ueBound}). To prove (\ref{LowerBoundmPlanning}), we require $C$ to be large enough to satisfy $\frac{1}{C}\|\ue\|_{\infty}<\frac{1}{2}c_1$. Then for all $\epsilon<\frac{1}{C}$, we have 
\[\me(x,T)=m_T(x)+\epsilon\ue(x,T)\geq m_T(x)-\frac{1}{2}c_1\geq \frac{1}{2}c_1. \]
The upper bound for $\me(x,T)$ is obtained similarly. We now conclude by Corollary \ref{DisplacementConvexityCorBounds}, since the maxima and minima of $\me$ both occur at $t=0,t=T$. Finally, (\ref{me(T)Bound}) follows immediately from the terminal condition in (\ref{MFGepsilon}) and (\ref{ueBound}).
\end{proof}
While the usefulness of (\ref{MFGepsilon}) will mainly be as a tool to obtain existence for (\ref{PlanningSystem}), it can also be used to provide an interesting counterexample. Indeed, one should note that (\ref{MFGepsilon}) is not itself a planning problem, but rather a special case of a standard MFG system, which would fit in the framework of (\ref{MainMFGSystem}) if the terminal cost function $g$ were allowed to depend on $x$. Such terminal conditions are treated in \cite{Munoz1,Munoz2} under the blow-up assumption (\ref{BlowUpCondition}), as well as the requirement that
\[g(x,0)\,\,\text{is constant, or } \,\lim_{m\rightarrow0^+}g(x,m)=-\infty,\]
which is a slightly weaker version of (\ref{BlowUpCondition}). The following proposition illustrates the fact that, when such assumptions do not hold, the solution may fail to exist.
\begin{prop} \label{PropositionCounterexample}
Assume that $H(0,0)<\infty$, and that the condition $m_T>0$ in \eqref{eq:M1} does not hold, so that $m_T(x_0)<0$ for some $x_0\in \T$. Then there exists $C>0$ such that, for all $0<\epsilon<\frac{1}{C}$, there exists no classical solution to (\ref{MFGepsilon}).
\end{prop}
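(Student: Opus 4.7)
The plan is to argue by contradiction: assume that for arbitrarily small $\epsilon > 0$ a classical solution $(\ue, \me)$ to \eqref{MFGepsilon} exists. Any such $\me$ must be strictly positive on $\overline{Q_T}$, since $H$ is defined only on $\mathbb{R} \times (0, \infty)$. My aim is to derive an $L^\infty$ bound on $\ue$ independent of $\epsilon$ by adapting Lemma \ref{UniformBoundsOnUepsilon}; the terminal condition $\me(x, T) = m_T(x) + \epsilon \ue(x, T)$ evaluated at $x_0$ will then force $\me(x_0, T)$ to be negative for small $\epsilon$, contradicting positivity.

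The main obstacle is that the proof of Lemma \ref{UniformBoundsOnUepsilon} uses $\min m_T > 0$ in a crucial way: it produces a positive lower bound on $\me(\cdot, T)$ via the terminal condition and then, through Corollary \ref{DisplacementConvexityCorBounds}, controls $|H(0, \min_\T \me(\cdot, s))|$, which appears in the oscillation estimate of Proposition \ref{OscillationProp}. Since $m_T$ is no longer assumed positive, no such lower bound on $\me$ is available. My first step is to reprise the maximum-principle argument of Lemma \ref{UniformBoundsOnUepsilon} with $v^\delta(x, t) := \ue(x, t) + H(0, M_0 + \delta)(T - t)$, which uses only $m_0 > 0$ and $H_m < 0$; this yields $\max_\T \me(\cdot, T) \leq M_0 + \text{osc}(m_T)$ and, through Corollary \ref{DisplacementConvexityCorBounds}, a uniform upper bound $\me \leq M$ on $\overline{Q_T}$, with $M$ independent of $\epsilon$.

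The key observation allowing me to bypass the missing lower bound is that the hypothesis $H(0, 0) < \infty$ gives a uniform two-sided bound on $H(0, \cdot)$ over $(0, M]$: since $H_m < 0$, the function is monotonically decreasing, so $H(0, M) \leq H(0, m) \leq H(0, 0)$ on that interval. Hence $|H(0, \min_\T \me(\cdot, s))| \leq \max(|H(0, 0)|, |H(0, M)|)$ uniformly in $s$ and $\epsilon$, no matter how small $\min \me$ may become. Proposition \ref{OscillationProp} then yields $\text{osc}_{\overline{Q_T}} \ue \leq C$ with $C$ independent of $\epsilon$. Since $\intT \me(\cdot, T) = \intT m_T = 1$, the terminal condition forces $\intT \ue(\cdot, T) = 0$, so some $x^\epsilon \in \T$ satisfies $\ue(x^\epsilon, T) = 0$; combined with the oscillation bound this gives $\|\ue\|_{L^\infty(\overline{Q_T})} \leq C$ uniformly in $\epsilon$. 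Evaluating the terminal condition at $x_0$ finally yields $\me(x_0, T) \leq m_T(x_0) + C\epsilon$, which is strictly negative as soon as $\epsilon < |m_T(x_0)|/C$, contradicting $\me > 0$.
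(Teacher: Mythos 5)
Your proposal is correct and follows essentially the same route as the paper: the paper's proof simply invokes "the proof of Lemma \ref{UniformBoundsOnUepsilon}" to get $\|u^\epsilon\|_\infty \leq C$ uniformly (noting $H(0,0)<\infty$), and then derives the contradiction $m^\epsilon(x_0,T)\geq 0 > m_T(x_0)$ from $\|m^\epsilon(T,\cdot)-m_T\|_\infty\leq C\epsilon$, exactly as you do. Your detailed re-derivation of the $L^\infty$ bound is a helpful unpacking of the paper's one-line reference, but note that the paper's proof of \eqref{ueBound} already avoids the lower bound on $m_T$ — it only uses $\min m_T>0$ in the later step establishing \eqref{LowerBoundmPlanning}, so the "obstacle" you describe is less of an issue than your framing suggests.
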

\begin{proof}
We assume, by contradiction, that there exists a decreasing sequence $\epsilon_n>0$, with $\lim\limits_{n\rightarrow\infty}\epsilon_n=0$, such that, for each positive integer $n$, there exists a solution $(u^n,m^n)$ to system ($\text{MFG}_{\epsilon_n}$). Since $H(0,0)<\infty$, the proof of Lemma \ref{UniformBoundsOnUepsilon} shows that, for some constant $C>0$ independent of $n\in \mathbb{N}$, we have $\|u^n\|_{\infty}\leq C$. However, this implies that
\[\|m^n(T,\cdot)-m_T(\cdot)\|_{\infty}\leq C\epsilon_n,\]
while $m^n(x_0,T)\geq 0>m_T(x_0)$, which is a contradiction. 
\end{proof}

We finish our estimates for the $\epsilon$--penalized problem with an analogue of Proposition \ref{GradientEstimateProp}.

\begin{lem}\label{GradientBoundsUepsilon}
For $\epsilon>0$, let $(\ue,\me)\in C^{3,\alpha}(\overline{Q_T})\times C^{2,\alpha}(\overline{Q_T})$ be a classical solution to system (\ref{MFGepsilon}), and assume that $H(0,0)<\infty$. Let $c_1$ and $C_1$ be as in Corollary \ref{DisplacementConvexityCorBounds}. There exists a constant $C>0$, independent of $\epsilon$, such that, for $\epsilon < \frac{1}{C}$, 
\[\|D\ue\|_{\infty}\leq C.\]
\end{lem}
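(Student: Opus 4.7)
My plan is to follow the scheme of Proposition \ref{GradientEstimateProp} almost verbatim, using an auxiliary function of the form
\[
v(x,t)=\frac{1}{2}\ue_x(x,t)^2+\frac{k}{2}\tilde{\ue}(x,t)^2,\qquad k=\|\ue_x\|_{\overline{Q_T}}^{3/2},
\]
with $\tilde{\ue}$ the shifted/tilted version of $\ue$ used there. The crucial input that makes this $\epsilon$--uniform is Lemma \ref{UniformBoundsOnUepsilon}: since $H(0,0)<\infty$, that lemma gives a bound on $\mathrm{osc}\, \ue$ and on both $\me$ and $1/\me$ that does not depend on $\epsilon$, provided $\epsilon<\tfrac{1}{C}$. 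Thus all of the quantities on which the constant in Proposition \ref{GradientEstimateProp} depends (the $L^{\infty}$ norms of $\me$ and $\me^{-1}$, the oscillation of $\ue$, and the values of $\overline{C}$ on the range of $\me$) are controlled independently of $\epsilon$. Letting $(x_0,t_0)\in\overline{Q_T}$ be a maximum point of $v$, I normalize so that $p:=\ue_x(x_0,t_0)$ satisfies $|p|\geq 1$ and $|p|^2\geq \tfrac12\|\ue_x\|_\infty^2$, and split into the three cases $t_0=0$, $0<t_0<T$, and $t_0=T$.

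For the initial boundary case $t_0=0$, the condition $\me(x,0)=m_{0}(x)$ is exactly the same as for \eqref{PlanningSystem}, so the argument yielding $|-H_m u_x m_x|\leq C\|(m_0)_x\|_\infty |p|^{\gamma+1}$ goes through word for word and gives $|p|\leq C$. For the interior case $0<t_0<T$, the linearized elliptic operator $L_{\ue}$ applied to $v$ produces exactly the same four error terms $E_1,E_2,E_3,E_4$ as in Proposition \ref{GradientEstimateProp}, all of which were estimated purely in terms of $|p|$, $k$, $\|\me\|_\infty$, $\|\me^{-1}\|_\infty$, and $\overline{C}$. Since all of these are $\epsilon$--uniform here, one again arrives at $0\leq -\tfrac{1}{C}|p|^{2\gamma+3/2}+C|p|^{2\gamma+1}$, hence $|p|\leq C$.

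The only place the $\epsilon$--penalized terminal condition enters is the case $t_0=T$, and this is the step I view as the main obstacle. Here $v_x(x_0,T)=0$ and $v_t(x_0,T)\geq 0$, so the first-order linearization of the HJ equation gives
\[
0\geq T_{\ue}v = -H_m\, \ue_x\, \me_x + k\tilde{\ue}\bigl(-\ue_t+H_p\ue_x\bigr).
\]
The new ingredient is that the $\epsilon$--penalized boundary condition $\me(x,T)=\epsilon\,\ue(x,T)+m_T(x)$ differentiated in $x$ yields
\[
\me_x(x,T)=\epsilon\,\ue_x(x,T)+(m_T)_x(x),
\]
so that
\[
-H_m\,\ue_x\,\me_x=-\epsilon H_m\,\ue_x^{\,2}\;-\;H_m\,\ue_x\,(m_T)_x\;\geq\;-H_m\,\ue_x\,(m_T)_x,
\]
because $-H_m>0$; the potentially dangerous $\epsilon$--dependence drops out with a favorable sign. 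Using \hyperref[eq:Hm growth]{(HM1)} and the uniform bound on $\me$, the remaining piece is controlled by $C\|(m_T)_x\|_\infty (1+|p|)^{\gamma+1}$, exactly as in the \eqref{PlanningSystem} case of Proposition \ref{GradientEstimateProp}. Coupling this with the coercive contribution $k\tilde{\ue}\bigl(-\ue_t+H_p\ue_x\bigr)\geq \tfrac{1}{C}|p|^{\gamma+3/2}-C|p|^{3/2}$ obtained from \hyperref[eq:H superlinear growth]{(H2)}, \hyperref[eq:Hpp bd]{(H1)} and the sign of $\tilde{\ue}$ at $t=T$, I conclude $\tfrac{1}{C}|p|^{\gamma+3/2}\leq C(1+|p|)^{\gamma+1}$, hence $|p|\leq C$. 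Combining the three cases gives the desired $\epsilon$--uniform bound $\|D\ue\|_\infty\leq C$ after recovering $\ue_t$ from $\ue_t=H(\ue_x,\me)$ and \eqref{eq:Hpp bd}--\eqref{eq:H superlinear growth}.
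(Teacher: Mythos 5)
Your proof is correct and follows essentially the same route as the paper: repeat Proposition \ref{GradientEstimateProp} with the $\epsilon$-uniform bounds on $\mathrm{osc}\,\ue$, $\me$, $(\me)^{-1}$ from Lemma \ref{UniformBoundsOnUepsilon} substituting for Proposition \ref{OscillationProp}, and in the $t_0=T$ case use $\me_x(\cdot,T)=\epsilon\ue_x+(m_T)_x$ together with $-H_m>0$ to discard the $\epsilon$-term by sign and reduce to the estimate (\ref{grad2}). (As a side remark, the paper's displayed identity writes $-H_m\ue(m_T)_x$ where it should read $-H_m\ue_x(m_T)_x$; your version of the computation is the correct one.)
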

\begin{proof}
We first observe that, by Corollary \ref{DisplacementConvexityCorBounds} and Lemma \ref{UniformBoundsOnUepsilon}, $\|\me\|_{\overline{Q}_T}$ and $\|(\me)^{-1}\|_{\overline{Q}_T}$ are bounded a priori in terms of $C_1$ and $c_1^{-1}$. The proof of Proposition \ref{GradientEstimateProp} may thus be repeated here, with Lemma \ref{UniformBoundsOnUepsilon} replacing the use of Proposition \ref{OscillationProp}, with one exception. Namely, the term $-H_m\ue_x\me_x$ in (\ref{grad1}) should be estimated as
\[-H_m\ue_x\me_x=-\epsilon H_m(\ue_x)^2-H_m\ue (m_T)_x\geq -H_m\ue (m_T)_x,\]
which, in view of (\ref{grad2}), yields the gradient bound in the case $t_0=T$. The rest of the argument follows unchanged.
\end{proof}

\section{Existence of classical solutions}\label{SectionExistence}

In the previous sections, a priori $L^{\infty}-$bounds were obtained for $u$, $Du$, $m$, and $m^{-1}$. This is already sufficient to obtain classical solutions to (\ref{MainMFGSystem}), following the arguments of \cite{Munoz1,Munoz2}. The existence of solutions to (\ref{PlanningSystem}), on the other hand, is a more delicate issue, because the Neumann type boundary condition that appears in the linearization makes the latter non--invertible. Namely, the linearization of (\ref{eq:quasilinear}) is
\begin{equation*}
    \begin{cases}
    L_u(w)=f & \text{ in }Q_T,\\
    (-1,H_p(u_x,m))\cdot Dw=g_1(x) & \text{ at }t=0,\\
    (1,-H_p(u_x,m))\cdot Dw=g_2(x) & \text{ at }t=T,
    \end{cases}
\end{equation*}
which is an oblique boundary value problem that is only solvable for certain functions $f,\,g_1,\,g_2$ satisfying a compatibility condition that itself depends on $u$. This failure of invertibility precludes the direct use of the implicit function theorem and thus of the method of continuity, which means a different approach is needed.
Indeed, we will obtain the solution as the limit as $\epsilon \rightarrow 0$ of the solution to the $\epsilon$--penalized problem (\ref{MFGepsilon}). We begin by noting, in the following lemma, that for $\epsilon$ small enough, the solutions to (\ref{MFGepsilon}) are a priori uniformly bounded in $C^{1,\beta}(\overline{Q}_T)$, for some $0<\beta<1$, and that the system thus has a classical solution.

\begin{lem}\label{HolderGradientUepsilon}
Let $C$ be as in Lemma \ref{UniformBoundsOnUepsilon}. For all $0<\epsilon<\frac{1}{C}$, (\ref{MFGepsilon}) has a unique smooth solution $(\ue,\me)\in C^{3,\alpha}(\overline{Q_T})\times C^{2,\alpha}(\overline{Q_T})$. Moreover, there exist constants $K>0$, $0<\beta<1$, independent of $\epsilon$, such that
\begin{equation} \label{uepsilonHolderEstimate}\|\ue\|_{C^{1,\beta}}\leq K.\end{equation}

\end{lem}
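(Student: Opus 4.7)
The plan is to realize (\ref{MFGepsilon}) as a standard MFG system with terminal cost $g^\epsilon(x,m):=\epsilon^{-1}(m-m_T(x))$, which is smooth and strictly increasing in $m$, and then apply the existence framework developed in \cite{Munoz1,Munoz2}. The usual obstacle in that framework is the blow-up hypothesis (\ref{BlowUpCondition}), which serves only to prevent the density from collapsing to zero; here, Lemma \ref{UniformBoundsOnUepsilon} already provides $c_1/2\leq \me\leq 2C_1$ uniformly for $\epsilon<1/C$, so (\ref{BlowUpCondition}) becomes irrelevant, and the arguments of \cite{Munoz1,Munoz2} can be run on the fixed compact density interval $[c_1/2,2C_1]$.

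To establish (\ref{uepsilonHolderEstimate}), I would first recast (\ref{MFGepsilon}) as the quasilinear oblique boundary value problem (\ref{eq:quasilinear}), where the terminal boundary operator becomes $B|_{t=T}(x,z,p,s)=s-H(p,m_T(x)+\epsilon z)$. Combining Lemma \ref{UniformBoundsOnUepsilon} with Lemma \ref{GradientBoundsUepsilon} yields uniform $L^\infty$ control on $\ue$, $D\ue$, $\me$, and $(\me)^{-1}$. Together with assumption \hyperref[eq:ellipticity]{(E)}, these bounds render $Q$ uniformly elliptic with smooth, uniformly bounded coefficients. The oblique boundary conditions at $t=0,T$ have normal component equal to $\pm 1$, hence are uniformly oblique, and their $z$-dependence remains smooth with uniform $C^k$ norms, because the $L^\infty$-bound on $\ue$ keeps the argument $m_T+\epsilon z$ inside the fixed compact set $[c_1/2,2C_1]$. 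Classical Hölder gradient estimates for quasilinear uniformly elliptic equations with smooth oblique boundary conditions, in the Ladyzhenskaya-Ural'tseva/Lieberman framework used in \cite{Munoz1,Munoz2}, then yield (\ref{uepsilonHolderEstimate}) for some $K,\beta$ depending only on the a priori quantities already controlled.

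Once (\ref{uepsilonHolderEstimate}) is in hand, Schauder theory applied iteratively to $\ue$, $\ue_x$, and $\ue_t$ bootstraps to $\ue\in C^{3,\alpha}(\overline{Q_T})$ and hence $\me=H^{-1}(\ue_x,\ue_t)\in C^{2,\alpha}(\overline{Q_T})$. Existence itself is obtained by the nonlinear method of continuity of \cite{Munoz1,Munoz2}: one interpolates between a trivial problem and (\ref{MFGepsilon}) via a smooth homotopy, the a priori estimates above persist along the path, and the linearized oblique problem at each stage is invertible thanks to the coercive Robin-type contribution $\partial_z B|_{t=T}=-\epsilon H_m>0$, which follows from $H_m<0$. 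This is precisely the non-degeneracy that fails for (\ref{PlanningSystem}) and motivates the whole penalization approach. Uniqueness is obtained by the Lasry-Lions monotonicity method: pairing two solutions against one another, integrating by parts, and exploiting the strict monotonicity of $-H$ in $m$ (a consequence of \hyperref[eq:ellipticity]{(E)}) together with the strict monotonicity of $g^\epsilon$ in $m$.

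The main obstacle is obtaining the $C^{1,\beta}$ estimate \emph{up to the boundary} uniformly in $\epsilon$, because the terminal boundary operator depends nonlinearly on $\ue$ via the composite $H(p,m_T+\epsilon z)$. The potentially dangerous $\epsilon^{-1}$ factor from $g^\epsilon$ is absent from $B$ itself, having been absorbed into the HJ identity at $t=T$; thus the only $\epsilon$-dependence in $B$ enters benignly through the $L^\infty$-controlled quantity $\epsilon\ue$, and the oblique boundary Hölder gradient estimates of \cite{Munoz1,Munoz2} apply without modification.
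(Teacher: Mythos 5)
Your proposal follows essentially the same route as the paper: recast (\ref{MFGepsilon}) as the quasilinear oblique boundary value problem (\ref{eq:quasilinear}) with the $t=T$ operator $B^{\epsilon}(x,T,z,p,s)=s-H(p,\epsilon z+m_T(x))$, feed in the a priori $C^1$ and $L^\infty$ bounds on $(\ue,\me,(\me)^{-1})$ from Lemmas \ref{UniformBoundsOnUepsilon} and \ref{GradientBoundsUepsilon} to get uniform ellipticity, verify obliqueness via $D_{(p,s)}B^\epsilon\cdot\nu=1$, invoke the Lieberman $C^{1,\beta}$ estimate, and then run the nonlinear method of continuity exactly as in \cite[Thm. 1.1]{Munoz1}. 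Your additional observations---that the uniform lower bound from Lemma \ref{UniformBoundsOnUepsilon} replaces the blow-up hypothesis (\ref{BlowUpCondition}), that the only $\epsilon$-dependence in $B^\epsilon$ enters through the $L^\infty$-controlled argument $m_T+\epsilon z$, and that the Robin-type term $\partial_z B^\epsilon|_{t=T}=-\epsilon H_m>0$ is what restores invertibility of the linearization (and is absent in the pure planning problem)---are all correct and consistent with the paper's discussion preceding the lemma.
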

\begin{proof}
 The a priori $C^1-$bounds on $\ue$, as well as $L^{\infty}-$bounds on $\me$ and $(\me)^{-1}$ (and thus on the ellipticity constants of the system), were all established in Lemmas \ref{UniformBoundsOnUepsilon} and \ref{GradientBoundsUepsilon}. The H\"older estimate for the gradient then follows in the same way as in \cite[Lem. 4.1]{Munoz1}, by directly applying the classical $C^{1,\alpha}$--estimates for quasilinear elliptic equations with oblique boundary conditions (see \cite[Lem. 2.3]{Lieberman}). Indeed, it suffices to verify that, for $(x,t,z,p,s)\in \T \times \{0,T\} \times \R \times \R \times \R $, the boundary condition
\[B^{\epsilon}(x,0,z,p,s)=-s+H(p,m_0(x)),\,\,\,B^{\epsilon}(x,T,z,p,s)=s-H(p,\epsilon z+m_T(x)),\]
is oblique. For this purpose, we let $\nu(x,t)$ denote the outward unit normal vector at $(x,t)\in \partial Q_T$. Then we have
\[ D_{(p,s)}B^{\epsilon}(x,0,z,p,s)\cdot \nu(x,0)=-B_s^{\epsilon}(x,0,z,p,s)=1>0,\]
\[ D_{(p,s)}B^{\epsilon}(x,T,z,p,s)\cdot \nu(x,T)=-B_s^{\epsilon}(x,T,z,p,s)=1>0\]
and thus the a priori estimate (\ref{uepsilonHolderEstimate}) follows.
The proof of existence is then the same as in \cite[Thm. 1.1]{Munoz1} through the method of continuity.
\end{proof}

We now have enough information on the $\epsilon$--penalized problem to prove our first theorem.

\begin{proof}[Proof of Theorem \ref{MainTheorem}]
 We initially assume that $m_0,m_T\in C^{\infty}(\T)$. The proof of part (ii), corresponding to (\ref{MainMFGSystem}), is identical to the one carried out in \cite[Thm. 1.1]{Munoz1}. We simply note that the condition $\lim\limits_{m\rightarrow0^+}H(p,m)=+\infty$ in that proof was only used to guarantee the existence of a positive lower bound for the density, which in turn makes the equation (\ref{eq:quasilinear}) uniformly elliptic. In our case, the lower bound is a consequence of Corollary \ref{DisplacementConvexityCorBounds} and Proposition \ref{mAnduBounds}.

Now, for the case of (\ref{PlanningSystem}), we remark first that uniqueness of $u$, up to a constant, follows by the standard Lasry-Lions monotonicity method. To establish existence, we consider first the approximate system (\ref{MFGepsilon}), under the assumption $H(0,0)<\infty$. We assume that $\epsilon>0$ is small enough for Lemma \ref{HolderGradientUepsilon} to guarantee the existence of solutions $(\ue,\me)$. Letting $0<\beta<1$ be as in Lemma \ref{HolderGradientUepsilon}, we also have \eqref{uepsilonHolderEstimate}, for some constant $K>0$ independent of $\epsilon$.
We infer that there exist a subsequence $\{u_n\}_n\subset \{\ue\}_{\epsilon}$, and $u\in C^{1,\alpha}(\overline{Q_T})$, such that $u_n \rightarrow u$ uniformly. Furthermore, in view of Lemma \ref{UniformBoundsOnUepsilon}, there exists $C>0$, independent of $\epsilon$, such that
\[\frac{1}{C}\leq \me(x,t)\leq C\text{ for all }(x,t)\in \overline{Q_T}.\]
We let $(A,B)$ and $(A_n,B_n)$, be the quasilinear operators and boundary conditions corresponding, respectively, to $u$ and $u_n$. Then one has
\[(A_n,B_n)\rightarrow (A,B) \text{ locally uniformly,}\]
\[D_qB_n\cdot \nu= 1.\]
Hence, by Fiorenza's convergence theorem for elliptic equations with oblique boundary conditions (see \cite[Thm. 2.5]{Munoz1}, \cite[Chapter 17, Lemma 17.29]{GilbargTrudinger}), we obtain $u_n\rightarrow u$ in $C^{2,\alpha}(\overline{Q}_T)$, and $u$ solves \eqref{eq:quasilinear}, with the boundary condition corresponding to (\ref{PlanningSystem}). The $C^{3,\alpha}$ regularity (and, in fact, uniform convergence in $C^{3,\alpha}$) then follows readily from the standard Schauder estimates for linear oblique problems, as in \cite[Thm. 1.1]{Munoz1}.

The last step will be to remove the assumption that $m_0\in C^{\infty}(\T)$ and, for (\ref{PlanningSystem}), the assumptions that $m_T\in C^{\infty}(\T)$ and $H(0,0)<\infty$. We will explain the argument for \eqref{PlanningSystem}, with the treatment of \eqref{MainMFGSystem} being completely analogous.  Consider, for $\delta>0$, the modified Hamiltonians $H^{\delta}(p,m):=H(p,m+\delta)$, which satisfy \hyperref[eq:Hpp bd]{(H)} and \hyperref[eq:ellipticity]{(E)}, uniformly in $\delta$, as well as $H^{\delta}(0,0)<\infty$, and a sequence of $C^{\infty}$ densities $(m_0^{\delta},m_T^{\delta})$, uniformly bounded in $C^{2,\alpha}$ and bounded away from $0$, converging uniformly to $(m_0,m_T)$.  Let  $(u^{\delta},m^{\delta})$ be the corresponding solutions to 
 \begin{equation}
     \begin{cases}
      -u_t^{\delta}+H^{\delta}(u_x^{\delta},m^{\delta})=0 & \text{ in }Q_T,\\
      \int_0^T\intT u^{\delta} =0,\\
      m_t^{\delta}-(m^{\delta}H_p^{\delta}(u_x^{\delta},m^{\delta}))_x=0 & \text{ in }Q_T,\\
      m^{\delta}(\cdot,0)=m_0^{\delta},\,\, m^{\delta}(\cdot,T)=m_T^{\delta} & \text{ on } \T.
     \end{cases}
 \end{equation}
 Propositions \ref{GradientEstimateProp} and \ref{OscillationProp}, and Corollary \ref{DisplacementConvexityCorBounds}, yield uniform $C^1-$bounds on $u^{\delta}$, and thus, as in the proof of Lemma \ref{uepsilonHolderEstimate}, uniform $C^{1,\beta}$ bounds for some $0<\beta<1$. We may thus conclude by letting $\delta \rightarrow 0$ and applying Fiorenza's convergence result as above.
\end{proof}

\section{Regularity of weak solutions} \label{WeakSection} 
We now study the existence and regularity of solutions to (\ref{MainMFGSystem}) and (\ref{PlanningSystem}) under the weaker assumption that, for some $\kappa>0$
\[\intT \frac{1}{m_0^{\kappa}(x)}dx<\infty,\,\,\intT \frac{1}{m_T^{\kappa}(x)}dx<\infty. \]
We note that, in particular, the above conditions allow for the densities to vanish at a set of measure zero. This, in general, creates significant issues, because \eqref{eq:quasilinear} is no longer uniformly elliptic. The key estimate that will allow us to prove smoothness in this setting is an interior lower bound on the density which depends only on $t^{-1}$, $\|m_0^{-\kappa}\|_1$ (and $(T-t)^{-1},\|m_T^{-\kappa}\|_1$, in the case of \eqref{PlanningSystem}). Indeed, this yields uniform ellipticity of \eqref{eq:quasilinear} away from $t=0$ and $t=T$.

We begin by giving the standard definition of a weak solution (see, for instance, \cite{CardaliaguetGraber, Munoz1, Porretta2}).

\begin{defn}\label{def:weaksol_def}[Definition of weak solution]
A pair $(u,m)\in\text{{BV}}(Q_{T})\times L_{+}^{\infty}(Q_{T})$
is called a weak solution to (\ref{MainMFGSystem}) (respectively (\ref{PlanningSystem})) if the following conditions
hold:

\begin{enumerate}
\item[(i)] $u_x\in L^{2}(Q_{T}),u\in L^{\infty}(Q_{T}),$ $m\in C^{0}([0,T];H^{-1}(\T))$.
\item[(ii)] $u$ satisfies the HJ inequality
\[
-u_{t}+H(u_x,m)\leq 0\;\;\text{ in }Q_{T},\]
in the distributional sense.
\item[(iii)] $m$ satisfies the continuity equation
\begin{equation}
m_{t}-(mH_p(u_x,m))_x=0\text{ in }Q_{T},\label{eq: FP weak}
\end{equation}
in the distributional sense.
\item[(iv)] We have $m(\cdot,T)\in L^\infty(\T)$. Moreover, $m(\cdot,0)=m_0$ in $H^{-1}(\T)$ and $u(\cdot,T)=g(m(\cdot,T))$ in the sense of traces (respectively, $m(\cdot,T)=m_T$ in $H^{-1}(\T)$).
\item[(v)] The following identity holds:
\[
\int \int_{Q_T} m(x,t)(H(u_x,m)-H_p(u_x,m)u_x)dxdt
=\int_{\T}(m(x,T)u(x,T)-m_{0}(x)u(x,0))dx.\label{eq:identity weak}
\]
\end{enumerate}
\end{defn}
The following lemma will be needed to show that, for solutions to \eqref{MainMFGSystem}, our interior regularity results may be extended up to time $t=T$.
\begin{lem}\label{DecreasingLemmaForDisplacement}
Let $(u,m)$ be a smooth solution to (\ref{MainMFGSystem}) under the assumptions of Theorem \ref{MainTheorem} and assume that \eqref{RadialConditionH} holds. Then, for every convex function $h\in C^2(0,\infty),$ the map
\[t\rightarrow \intT h(m(x,t))dx\]
is decreasing. Moreover, there exists a constant $C=C(C_0,\|(g')\|_{L^{\infty}([\min m_0,\max m_0])}^{-(\gamma-1)})$ such that
\[ \frac{d}{dt}\int_{\T} h(m(x,T))dx +\frac{1}{C}\int_{\T}h''(m(x,T))  |m_x(x,T)|^{\gamma} \leq 0.\]
\end{lem}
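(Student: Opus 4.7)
The plan is to compute $\tfrac{d}{dt}\intT h(m(x,t))\,dx$ directly, evaluate it at $t=T$ using the terminal condition $u(\cdot,T)=g(m(\cdot,T))$, and then use the convexity of $t\mapsto \intT h(m(\cdot,t))\,dx$ from Proposition~\ref{DisplacementProp} to propagate the resulting sign information to the entire interval $[0,T]$.

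First, multiplying the continuity equation by $h'(m)$ and integrating by parts in $x$ yields the identity
\[
\frac{d}{dt}\intT h(m(x,t))\,dx=-\intT h''(m)\,m\,m_x\,H_p(u_x,m)\,dx,
\]
which also appears at the start of the proof of Proposition~\ref{DisplacementProp}. Evaluating at $t=T$, the terminal condition $u(x,T)=g(m(x,T))$ gives $u_x(x,T)=g'(m(x,T))m_x(x,T)$; writing $p=g'(m)m_x$, the integrand at $t=T$ becomes $h''(m)\,\tfrac{m}{g'(m)}\,pH_p(p,m)$, which is nonnegative by \eqref{RadialConditionH} together with \eqref{g increasing}, $m>0$, and $h''\ge 0$. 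Hence $\tfrac{d}{dt}\intT h(m(\cdot,T))\,dx\le 0$. To extend this to all $t\in[0,T]$, I invoke Proposition~\ref{DisplacementProp}: since $h$ is convex, $\tfrac{d^2}{dt^2}\intT h(m(\cdot,t))\,dx\ge 0$, so the first derivative is nondecreasing in $t$, and being nonpositive at $t=T$, it is nonpositive throughout $[0,T]$, which proves the monotonicity statement.

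For the quantitative bound I would strengthen the sign analysis above by establishing the pointwise growth $pH_p(p,m)\ge |p|^\gamma/C(C_0,\gamma)$. This is possible because \eqref{RadialConditionH} forces $H_p(0,m)=0$ by continuity, so integrating the lower bound $H_{pp}\ge C_0^{-1}(1+|p|)^{\gamma-2}$ from \eqref{eq:Hpp bd} produces a superlinear lower bound on $|H_p|$, and multiplying by $p$ yields the estimate on $pH_p$. Substituting $p=g'(m(\cdot,T))m_x(\cdot,T)$, using Corollary~\ref{DisplacementConvexityCorBounds} to bound $m$ above and below, and absorbing the factor $(g'(m))^{\gamma-1}$---whose reciprocal dependence gives rise to $\|g'\|_{L^\infty([\min m_0,\max m_0])}^{-(\gamma-1)}$ in the constant---then yields
\[
-\frac{d}{dt}\intT h(m(x,T))\,dx\,\ge\,\frac{1}{C}\intT h''(m(x,T))\,|m_x(x,T)|^\gamma\,dx,
\]
as claimed. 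The main technical subtlety lies in controlling the pointwise lower bound on $pH_p$ uniformly in $p$, particularly near $p=0$ in the subquadratic regime $\gamma<2$, where one must carefully combine $H_{pp}(0,m)\ge C_0^{-1}$ with the sign information from \eqref{RadialConditionH}; everything else is a routine application of the identities and estimates from Proposition~\ref{DisplacementProp} and the structural assumptions on $g$ and $H$.
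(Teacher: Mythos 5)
Your proof follows essentially the same route as the paper's: evaluate $d(t):=\frac{d}{dt}\intT h(m)$ at $t=T$ via the terminal condition $u_x(\cdot,T)=g'(m(\cdot,T))m_x(\cdot,T)$ and \eqref{RadialConditionH} to get $d(T)\le 0$, invoke the convexity from Proposition~\ref{DisplacementProp} to propagate $d\le 0$ to all of $[0,T]$, and obtain the quantitative bound from the pointwise estimate $pH_p(p,m)\ge |p|^\gamma/C$. You are in fact somewhat more explicit than the paper, which merely asserts this last pointwise inequality from \eqref{RadialConditionH} and \eqref{eq:Hpp bd}, in noting that it hinges on $H_p(0,m)=0$ and in flagging the delicate regime $\gamma<2$ near $p=0$ (where integrating the lower bound on $H_{pp}$ yields a quadratic, not a $\gamma$-power, lower bound on $pH_p$, so the claimed uniform constant deserves more justification than either your sketch or the paper supplies).
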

\begin{proof}
In view of Proposition \ref{DisplacementProp}, we have that 
\[\frac{d^2}{dt^2}\intT h(m(x,t))dx\geq 0,\]
and, thus, the function 
\[d(t):= \frac{d}{dt} \intT h(m(x,t))dx\]
is increasing. We then infer that the monotonicity will follow if we show that 
\[d(T)\leq 0.\]
Since $u(\cdot,T)=g(m(\cdot,T)),$ and $m$ satisfies the continuity equation, we have 
\begin{align*}
    d(T)&=\int_{\T} h'(m(x,T)) m_t(x,T)dx=\int_{\T} h'(m)(mH_p(u_x,m))_xdx=-\int_{\T}h''(m) m_x H_p(m_xg'(m),m).\end{align*}
Now, as a result of \eqref{RadialConditionH} and \eqref{eq:Hpp bd},
\[H_p(m_xg'(m),m)(m_x g'(m))\geq \frac{1}{C}|m_xg'(m)|^{\gamma},\]
and, therefore,
\[d(T)\leq -\frac{1}{C}\int_{\T}h''(m)  |m_x|^{\gamma}.\]
\end{proof}
We are now ready to obtain the interior lower bounds on $m$. Our method of proof relies on the displacement convexity formula \eqref{DisplacementFormula}, and uses similar techniques to \cite[Prop. 5.2]{Porretta2}.
\begin{prop}\label{LocalLowerBoundDensityProp}
Let $(u,m)$ be a smooth solution to (\ref{MainMFGSystem}) or (\ref{PlanningSystem}), under the same assumptions as in Theorem \ref{MainTheorem}. Assume, furthermore, that \eqref{eq:Hm growth weak} holds and, in the case of (\ref{MainMFGSystem}), assume that \eqref{RadialConditionH} holds. Let
\[\beta=\frac{2}{\kappa -s-1},\] and let $\delta>0$. Then, there exist a constant $C=C(C_0\|m_0^{-\kappa }\|_{L^1},\|m_T^{-\kappa }\|_{L^1},\delta^{-1})$ such that
\begin{equation}\label{eq: MFGP local lower bound} m(x,t)\geq \frac{1}{C}\left(\frac{1}{t^{\beta+\delta}}+\frac{1}{(T-t)^{\beta+\delta}}\right)^{-1}.\end{equation}
Furthermore, in the case of (\ref{MainMFGSystem}), one has
\begin{equation} \label{eq: MFG local lower bound}m(x,t)\geq \frac{1}{C}t^{\beta+\delta}.\end{equation}
\end{prop}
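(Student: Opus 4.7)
The proof applies Proposition \ref{DisplacementProp} to a family of convex power functions and combines the resulting displacement inequality with one-dimensional Sobolev embedding.

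\textbf{Step 1 (bounded convex functional).} For $a\in(0,\kappa]$, I set
\[F_a(t):=\frac{1}{a(a+1)}\intT m(x,t)^{-a}\,dx.\]
Since $h(m):=m^{-a}/(a(a+1))$ satisfies $h''>0$, Proposition \ref{DisplacementProp} together with \hyperref[eq:ellipticity]{(E)} implies that $F_a$ is convex on $[0,T]$. Hölder's inequality with exponents $\kappa/a$ and $\kappa/(\kappa-a)$ yields $F_a(0)+F_a(T)\leq C(\|m_0^{-\kappa}\|_{L^1},\|m_T^{-\kappa}\|_{L^1})$, so by convexity $F_a(t)\leq C$ on $[0,T]$. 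In the case of \eqref{MainMFGSystem} with \eqref{RadialConditionH}, Lemma \ref{DecreasingLemmaForDisplacement} upgrades convexity to monotonicity, so that $F_a(t)\leq F_a(0)$ uniformly, which is what will allow one to drop the $(T-t)^{-(\beta+\delta)}$ contribution in \eqref{eq: MFG local lower bound}.

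\textbf{Step 2 (quantitative displacement inequality).} Combining \eqref{eq:Hpp bd} with \eqref{eq:Hm growth weak}, in the small-density region $\{m\leq 1/C_0\}$ one has $-mH_mH_{pp}\geq m^{s+1}/C$. Substituting into Proposition \ref{DisplacementProp} and rewriting $m^{s-a-1}m_x^2$ as a constant multiple of $((m^{q})_x)^2$ with $q:=(s-a+1)/2$, I obtain
\[F_a''(t)\geq \frac{1}{C}\intT \bigl((m^{q}(\cdot,t))_x\bigr)^2\,dx - C_a,\]
where the subtracted constant $C_a$ absorbs the contribution from $\{m\geq 1/C_0\}$ via the a priori upper bound on $m$ from Corollary \ref{DisplacementConvexityCorBounds}. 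Taking $a$ slightly larger than $\kappa$ (with the excess traded against $\delta$ at the end) makes $q<-1/\beta$ strictly.

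\textbf{Step 3 (Sobolev in space).} The one-dimensional embedding $W^{1,2}(\T)\hookrightarrow L^\infty(\T)$ combined with the uniform bound $\|m^{q}\|_{L^1(\T)}\leq C$ (coming from Step 1 and Hölder's inequality, since $|q|\leq\kappa$) gives, for $\mu(t):=\min_\T m(\cdot,t)$,
\[\mu(t)^{-2|q|} = \|m^{q}(\cdot,t)\|_{L^\infty(\T)}^2 \leq C\bigl(1+F_a''(t)\bigr).\]

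\textbf{Step 4 (integrated-to-pointwise conversion).} This is the main technical obstacle. Although $F_a''$ is only available as a nonnegative distribution with a priori bounds of integrated nature (the total mass being controlled by the endpoint data of $F_a'$, which are themselves only locally bounded away from $t\in\{0,T\}$), the function $F_a$ itself is uniformly bounded. The plan is to test the inequality of Step 3 against a family of nonnegative weights $\psi_{t_0}(t)$ vanishing at $\{0,T\}$ and concentrated near $t_0$, integrating by parts twice to exchange $F_a''$ for $F_a$; optimizing the weight produces, after a short iteration over nearby exponents $a\in(\kappa-\epsilon,\kappa+\epsilon)$, an estimate of the form $\mu(t_0)^{-2|q|}\leq C(t_0^{-(\beta+\delta)2|q|}+(T-t_0)^{-(\beta+\delta)2|q|})$, from which \eqref{eq: MFGP local lower bound} follows. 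The small parameter $\delta$ in the statement precisely absorbs the loss incurred in each iteration. In the case of \eqref{MainMFGSystem} with \eqref{RadialConditionH}, monotonicity of $F_a$ from Lemma \ref{DecreasingLemmaForDisplacement} removes the need for the symmetric step near $t=T$, producing \eqref{eq: MFG local lower bound}. The hardest part is this conversion from an integrated to a pointwise-in-time bound, since $F_a''(t_0)$ is not controlled pointwise and one must extract a quantitative rate from the purely distributional convexity estimate.
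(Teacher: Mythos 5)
Your Steps 1--3 are a faithful reconstruction of the paper's opening moves: the uniform bound on $\sup_t\|m^{-\kappa}(t)\|_1$ from convexity of $t\mapsto\intT m^{-\kappa}$, the conversion via \eqref{eq:Hm growth weak} of the displacement inequality into control of $\intT\bigl((m^{q})_x\bigr)^2$, and the 1D Sobolev step $\|m^{-1}\|_\infty^{2|q|}\leq C\bigl(1+\intT((m^q)_x)^2\bigr)$ anchored at the point where $m=1$. However, Step 4, which you yourself flag as ``the main technical obstacle,'' is a genuine gap, and the strategy you sketch does not close it. Testing $\mu(t)^{-2|q|}\leq C(1+F_a''(t))$ against a weight $\psi_{t_0}$ and integrating by parts twice controls $\int\mu(t)^{-2|q|}\psi_{t_0}(t)\,dt$, i.e., a weighted \emph{average} of $\mu^{-2|q|}$ over an interval around $t_0$, not the pointwise quantity $\mu(t_0)^{-2|q|}$. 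No amount of ``optimizing the weight'' turns an averaged bound into a pointwise bound without an extra structural input, and the ``iteration over nearby exponents $a\in(\kappa-\epsilon,\kappa+\epsilon)$'' does not supply it: when $a>\kappa$ the endpoint quantities $F_a(0),F_a(T)$ are no longer controlled by the data $\|m_0^{-\kappa}\|_{L^1},\|m_T^{-\kappa}\|_{L^1}$, so the uniform-in-$t$ bound of Step 1 fails exactly in the regime you want to use.

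The paper supplies two ingredients you are missing. First, rather than a uniform bound on $F_a$, it derives for $\phi(t):=\intT m^{-p\kappa}(t)$ (with an auxiliary parameter $p>1$) a self-improving nonlinear differential inequality $-\phi''+\tfrac{1}{C}\phi^{r}\leq C$ with $r>1$, obtained by interpolating $\phi\leq C\|m^{-1}\|_\infty^{\kappa(p-1)}$ against the Sobolev bound. This ODE admits explicit supersolutions $\psi(t)=A_p\bigl(t^{-p\kappa\beta}+(T-t)^{-p\kappa\beta}\bigr)+K_p$ that blow up at $t\in\{0,T\}$, so the comparison principle gives $\phi\leq\psi$ on $(0,T)$ \emph{with no boundary information about $\phi$ at all} — this is precisely how one evades the integrability failure that plagues your $a>\kappa$ idea. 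Second, the passage from the resulting time-integrated $L^\infty$ bound to the pointwise one in $t$ exploits the fact (again from Proposition \ref{DisplacementProp}) that $t\mapsto\intT m^{-q}(t)$ is convex for \emph{every} $q>0$; quasi-convexity of $\bigl(\intT m^{-2|\lambda|q}\bigr)^{1/q}$ then localizes the maximum over $[t_0,T-t_0]$ to the endpoints and lets one pass $q\to\infty$ to obtain $\|m^{-1}\|_{L^\infty(\T\times[t_0,T-t_0])}^{2|\lambda|}\leq\tfrac{2}{t_0}\int\|m^{-1}(t)\|_\infty^{2|\lambda|}\,dt$. Only after these two reductions is the integration-by-parts-against-a-bump-function step, which is the piece you did describe, decisive, and the final exponent $\beta+\delta$ is achieved by sending $p\to\infty$ rather than perturbing $a$ around $\kappa$. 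Without the ODE comparison and the convexity-in-$t$-for-all-$q$ observation, your plan stalls at an averaged estimate and cannot produce the pointwise lower bound \eqref{eq: MFGP local lower bound}.
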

\begin{proof}
Using the displacement convexity formula (\ref{DisplacementFormula}) for $h(m)=\frac{1}{m^{\kappa }}$, we have, for each $t\in[0,T]$,
\begin{equation}\label{m-1bound}
\intT \frac{1}{m^\kappa (x,t)}dx \leq  \max \left( \intT \frac{1}{m_0^\kappa (x)}dx, \intT \frac{1}{m^\kappa (x,T)}dx\right).
\end{equation}
Combined with Lemma \ref{DecreasingLemmaForDisplacement} (for the case of \eqref{MainMFGSystem} where $m(\cdot,T)$ is not prescribed), this yields
\begin{equation}\label{Sup_1/m}
    \sup\limits_{t\in [0,T]}\|m^{-\kappa}(t)\|_1\leq C.
\end{equation}
Next, for any $p>1$, we define the function 
\[\phi(t):=\intT m^{-p\kappa }(t)dx.\]
Using Proposition \ref{DisplacementProp} with $h(m)=m^{-p\kappa },$ as a result of \hyperref[eq:ellipticity]{(E)}, we obtain 
\begin{align*} 
    \frac{d^2}{dt^2}\intT \frac{m^{-p\kappa }(t)}{p\kappa (p\kappa +1)}dx &\geq -\frac{1}{C}\intT m^{-p\kappa -1}mH_{pp}H_m(m_x)^2dx\geq \intT \frac{1}{C}m^{-p\kappa -1+s}(m_x)^2 dx\\
    &\geq \frac{1}{C(\frac{-p\kappa +s +1}{2})^2}\intT (m^{\frac{-p\kappa +s+1}{2}})_x^2dx. 
\end{align*}
As a result, letting 
\[C_{p}:=\frac{C(p\kappa -s-1)^2}{4p\kappa(p\kappa +1)},\]
\[ \label{eq:Lambda def}\lambda :=\frac{-p\kappa +s+1}{2},\]
we have shown that 
\begin{equation}\label{eq:phi1}
    C_{p}\phi''(t)\geq \intT (m^{\lambda})_x^2 dx.
\end{equation}
From \hyperref[eq:mweak]{(W)}, and the fact that $p>1$, we see that $\lambda<0$. For each $t\in [0,T]$, since $m(\cdot,t)$ is a probability measure, there exists a point $x_0^t$ such that $m(x_0^t,t)=1$. By the fundamental theorem of calculus,
\begin{equation} \label{eq: FTC appli}    
\Big\| m^{\lambda }(t)-1 \Big\|_{\infty}^2=\Big\| m^{\lambda }(t)-m(x_0^t,t)^{\lambda} \Big\|_{\infty}^2\leq C \intT (m^{\lambda})_x^2 dx,\end{equation}
and therefore
\begin{equation} \label{eq:Morrey} \Big\|\frac{1}{m}\Big\|_{\infty}^{2|\lambda|}\leq C\Big(\int_{\T} (m^{\lambda})_x^2 dx+1\Big). \end{equation}
Now, using \eqref{Sup_1/m}, we obtain
\[\phi=\int_{\T} \frac{1}{m^{\kappa p}}\leq \int_{\T}\frac{1}{m^\kappa} \left\| \frac{1}{m}\right\|_{\infty}^{\kappa(p-1)}\leq C\left\| \frac{1}{m}\right\|_{\infty}^{\kappa(p-1)},\]
and, consequently,
\begin{equation}\label{eq:InterpolationForphi}
    C^{-r}\phi^r\leq \left\| \frac{1}{m}\right\|_{\infty}^{2|\lambda|},
\end{equation}
where $r:=\frac{2|\lambda|}{\kappa (p-1)}$. From condition \hyperref[eq:mweak]{(W)}, we see that $r>1$. Combining \eqref{eq:phi1}, \eqref{eq:Morrey}, and \eqref{eq:InterpolationForphi}, we obtain
\[C_{p}\Big(\phi''(t)+1\Big)-C^{-r}\phi(t)^{r} \geq 0,\]
that is, for some constant $C=C(p)$,
\begin{equation}\label{FinalPDEForPhi}
-\phi''(t)+\frac{1}{C}\phi^{r}\leq C.
\end{equation}
A straightforward computation then shows that the functions
\[\psi_1(t)=A_pt^{-p \kappa \beta}+K_p,\]
\[\psi_2(t)=A_p(T-t)^{-p\kappa  \beta}+K_p,\]
\[\psi(t)=\psi_1(t)+\psi_2(t),\]
are supersolutions of \eqref{FinalPDEForPhi} for large enough $A_p,K_p$. Therefore, we have
\begin{equation} \label{eq:lowBound0}\int_{\T} m^{-p \kappa}(t) \leq A_p(t^{-p \kappa \beta}+(T-t)^{-p \kappa \beta}) +2K_p. \end{equation}
Now, going back to \eqref{eq:phi1} and \eqref{eq:Morrey}, we may write
\begin{equation} \label{eq:lowBound1}\left\|\frac{1}{m} \right\|_{\infty}^{2 |\lambda|}(t) \leq C(\frac{d^2}{dt^2} \int_{\T} m^{-p\kappa}+ 1).  \end{equation}
In view of (\ref{DisplacementFormula}), for $q>0$, the map
\begin{equation} \label{eq:lowBound convex map} t \mapsto \int_{\T} m^{-q}(t)\end{equation}
is convex in $[0,T]$. Thus, fixing $t_0\in (0,\frac{T}{2}]$, we infer that, for each $t\in [t_0,T-t_0],$ 
\begin{multline*}
    \left(\int_{\T} m^{-2|\lambda| q}(t) \right)^{\frac{1}{q}}\leq \frac{2}{t_0}\max\left(\int_{\frac{t_0}{2}}^{t_0}\left(\int_\T m^{-2|\lambda|q} \right)^{\frac{1}{q}},\int_{T-t_0}^{T-\frac{t_0}{2}}\left(\int_\T m^{-2|\lambda|q}\right)^{\frac{1}{q}} \right)\\
    \leq\frac{2}{t_0}\int_{\frac{t_0}{2}}^{T-\frac{t_0}{2}}\left(\int_\T m^{-2|\lambda|q}\right)^{\frac{1}{q}}.
\end{multline*}
Letting $q \rightarrow \infty $, we obtain
\begin{equation} \label{eq:local MFGP}\left\| m^{-1} \right\|_{L^{\infty}(\T \times [t_0,T-t_0])}^{2|\lambda|}\leq \frac{2}{t_0} \int_{\frac{t_0}{2}}^{T-\frac{t_0}{2}}  \left\| m^{-1}(t) \right\|_{\infty}^{2|\lambda|}dt.\end{equation}
 Now, letting $\zeta \in C^{\infty}(Q_T)$ be a test function, supported in $[\frac{t_0}{4},T-\frac{t_0}{4}]$, such that $0\leq \zeta \leq 1$,  $\zeta \equiv 1$ in $[\frac{t_0}{2},T-\frac{t_0}{2}]$, and $\int_{0}^T |\zeta''(t)|dt\leq \frac{C}{t_0}$, we see that \eqref{eq:local MFGP} implies
\begin{equation} \label{eq: convexity argu} \left\| m^{-1} \right\|_{L^{\infty}(\T \times [t_0,T-t_0])}^{2|\lambda|}\leq \frac{2}{t_0} \int_{0}^{T}   \left\| m^{-1} \right\|_{\infty}^{2|\lambda|} (t)\zeta(t)dt.   \end{equation}
Hence, integrating by parts twice, we infer from \eqref{eq:lowBound0} and \eqref{eq:lowBound1} that
\[\left\| m^{-1} \right\|_{L^{\infty}(\T \times [t_0,T-t_0])}^{2|\lambda|} \leq \frac{C}{t_0}\left(\int_{0}^{T} \int_{\T} (m^{-p\kappa} \zeta '') + CT \right) \leq C \left(\frac{1}{t_0^{2+p\kappa \beta}}+\frac{1}{t_0}\right),\]
which yields
\[\left\| m^{-1} \right\|_{L^{\infty}(\T \times [t_0,T-t_0])}\leq C\left(\frac{1}{t_0^{\frac{2+p\kappa \beta}{ 2|\lambda|}}}+\frac{1}{t_0^{\frac{1}{2|\lambda|}}}\right).\]
Now, recalling \eqref{eq:Lambda def}, we see that
\[\label{eq:lowbd pf 1}\lim_{p\rightarrow \infty} \frac{1}{2|\lambda|}=0 \text{ and }\lim_{p\rightarrow \infty} \frac{2+p\kappa \beta}{ 2|\lambda|}=\beta.\]
Thus, we may fix $p$ chosen large enough that $\frac{2+\kappa \beta}{ 2|\lambda|}<\beta+\delta$, and, as a result of \eqref{eq:lowbd pf 1},
\[\left\| m^{-1} \right\|_{L^{\infty}(\T \times [t_0,T-t_0])}\leq C\frac{1}{t_0^{\beta+\delta}}.\]
This implies \eqref{eq: MFGP local lower bound}. Now, for the case of \eqref{MainMFGSystem}, we simply observe that, from Lemma \ref{DecreasingLemmaForDisplacement}, the map \eqref{eq:lowBound convex map} is non-increasing on $[0,T]$, and, thus, \eqref{eq:local MFGP} may be strengthened to
\[\left\| m^{-1} \right\|_{L^{\infty}(\T \times [t_0,T])}^{2|\lambda|} \leq \frac{2}{t_0} \int_{\frac{t_0}{2}}^{T}  \left\| m^{-1} \right\|_{\infty}^{2|\lambda|}(t)dt. \]\end{proof}
The following lemma is a basic computation exploiting \eqref{eq:ellipticity}, and will be used in the proof of Theorem \ref{LocalTheorem} to estimate the terms arising from the Lasry-Lions monotonicity method.
\begin{lem}\label{EstimateOfHForLasryLions}There exists a constant $C=C(C_0)>0$ such that, given $-\infty<p_0<p_1<\infty$ and $0<m_0<m_1<\infty$, we have
\begin{multline} \label{eq: LasryLions1}\left(m_1 H_p(p_1,m_1)-m_0 H_p(p_0,m_0)\right)(p_1-p_0)-\left(H(p_1,m_1)-H(p_0,m_0)\right)(m_1-m_0) \\
\geq \frac{m_1+m_0}{C}(p_1-p_0)^2+\frac{k}{C}(m_1-m_0)^2,\end{multline}
where $k=\min_{[p_0,p_1]\times[m_0,m_1]} (-H_m(p,m))$. Moreover, if $H$ satisfies \hyperref[eq:mweak]{(W)}, then 
\begin{multline}\label{eq: LasryLions2} \left(m_1 H_p(p_1,m_1)-m_0 H_p(p_0,m_0)\right)(p_1-p_0)-\left(H(p_1,m_1)-H(p_0,m_0)\right)(m_1-m_0) \\
\geq \frac{m_1+m_0}{C}(p_1-p_0)^2+\frac{1}{C(s+1)}(m_1^{s+1}-m_0^{s+1})(m_1-m_0).\end{multline}
\end{lem}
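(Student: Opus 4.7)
The plan is to prove both inequalities by a one-variable calculus argument along the straight line segment from $(p_0, m_0)$ to $(p_1, m_1)$. I would parameterize $p(t) = p_0 + t(p_1-p_0)$ and $m(t) = m_0 + t(m_1-m_0)$ for $t \in [0,1]$, and introduce the auxiliary scalar function
\[
F(t) := m(t) H_p(p(t), m(t))\,(p_1-p_0) - H(p(t), m(t))\,(m_1-m_0),
\]
chosen so that $F(1) - F(0)$ equals exactly the left-hand side of the claimed inequality. A direct differentiation, writing $\dot p = p_1 - p_0$ and $\dot m = m_1 - m_0$, yields
\[
F'(t) = m H_{pp}\,\dot p^{\,2} + m H_{pm}\,\dot p\,\dot m + (-H_m)\,\dot m^2,
\]
so the problem reduces to bounding this quadratic form in $(\dot p,\dot m)$ from below pointwise along the path and then integrating.

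To obtain the pointwise lower bound I would exploit the ellipticity assumption \hyperref[eq:ellipticity]{(E)}, which states that the symmetric matrix $\left(\begin{smallmatrix} m H_{pp} & \tfrac{1}{2}m H_{pm} \\ \tfrac{1}{2} m H_{pm} & -H_m \end{smallmatrix}\right)$ has strictly positive determinant with a quantified gap. Picking $r = r(C_0) \in (0,1)$ small enough that $(1-r)^2 (1+C_0^{-1}) \geq 1$, the reduced quadratic form
\[
(1-r) m H_{pp}\,\dot p^{\,2} + m H_{pm}\,\dot p\,\dot m + (1-r)(-H_m)\,\dot m^2
\]
has nonpositive discriminant by \hyperref[eq:ellipticity]{(E)} and is therefore nonnegative. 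Subtracting this from $F'(t)$ leaves the clean diagonal lower bound
\[
F'(t) \;\geq\; r\bigl[\, m(t) H_{pp}(p(t),m(t))\,\dot p^{\,2} + (-H_m(p(t),m(t)))\,\dot m^2 \,\bigr],
\]
ready to be integrated over $[0,1]$.

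Integrating and estimating term by term then gives the two claims. For \eqref{eq: LasryLions1}, the $\dot p^{\,2}$ contribution is handled by combining the lower bound $H_{pp} \geq C_0^{-1}$ (on the relevant range, from \hyperref[eq:Hpp bd]{(H)}) with $\int_0^1 m(t)\,dt = (m_0+m_1)/2$, while the $\dot m^2$ part is bounded below by $r k (m_1-m_0)^2$ directly from the definition of $k$. For \eqref{eq: LasryLions2} I would instead feed in the sharper bound $-H_m(p(t),m(t)) \geq C_0^{-1} m(t)^s$ from the second half of \hyperref[eq:mweak]{(W)} and compute explicitly
\[
\int_0^1 m(t)^s \,dt \;=\; \frac{m_1^{s+1} - m_0^{s+1}}{(s+1)(m_1 - m_0)},
\]
which, multiplied by $(m_1-m_0)^2$, produces precisely $(m_1^{s+1}-m_0^{s+1})(m_1-m_0)/(s+1)$. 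The main delicacy I expect is bookkeeping the constant dependence: the lower bound on $H_{pp}$ from \hyperref[eq:Hpp bd]{(H)} is truly uniform only when $\gamma \geq 2$, and the sharper $-H_m \geq C_0^{-1} m^s$ is only guaranteed for $m \in (0,1/C_0)$ by \hyperref[eq:mweak]{(W)}, so some absorbed constants may implicitly depend on the range of $p$ or $m$ traversed along the segment, which is harmless for the later applications where $m$ is already known to be bounded above and below.
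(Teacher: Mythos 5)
Your proposal is correct and follows essentially the same route as the paper's proof: parameterize along the segment, take the auxiliary function whose derivative is the quadratic form $m H_{pp}\dot p^2 + m H_{pm}\dot p\dot m - H_m\dot m^2$, use the ellipticity hypothesis (E) to peel off a diagonal remainder $r[mH_{pp}\dot p^2 + (-H_m)\dot m^2]$, and integrate (your discriminant bookkeeping is just a cosmetic variant of the paper's completion of the square). You also correctly flag the same subtleties that are implicit in the paper's own proof, namely that (H1) gives a uniform lower bound on $H_{pp}$ only when $\gamma\geq2$ or on a bounded $p$-range, and that (HW) gives $-H_m\gtrsim m^s$ only for $m<1/C_0$.
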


\begin{proof}
Following the technique carried out in \cite{Takis},
for $z\in [0,1],$ we define 
\[\Delta p=p_1-p_0\,\,\Delta m=m_1-m_0,\, p_z=p_0+z\Delta p,\,m_s=m_0+z\Delta m.\]
We then let
\[\phi(z)=(m_zH_p(p_z,m_z)-m_0H_p(p_0,m_0))\Delta p-(H(p_z,m_z)-H(p_0,m_0))\Delta m,\]
and differentiation yields
\[\phi'(z)=m_z H_{pp} (\Delta p)^2+m_zH_{mp}\Delta m \Delta p - H_m(\Delta m)^2.\]
Now, in view of \eqref{eq:ellipticity}, we have, for some constant $C>0$,
\[-H_m\geq \frac{1}{4 H_{pp}}m_zH_{mp}^2(1+\frac{1}{C})-\frac{1}{C}H_m.\]
Therefore,
\begin{multline}\label{eq:LL}
\phi'(z)\geq m_z\left(\frac{1}{\sqrt{1+\frac{1}{C}}}\sqrt{H_{pp}}\Delta p+ \frac{\sqrt{1+\frac{1}{C}}}{2 \sqrt{H_{pp}}}H_{mp}\Delta m\right)^2\\
+m_z H_{pp} (\Delta p)^2 (1-\frac{1}{1+\frac{1}{C}})-\frac{1}{C}H_m(\Delta m)^2.\end{multline}
If \hyperref[eq:mweak]{(W)} holds, then, up to increasing the constant $C>0$, as well as using (\ref{eq:Hpp bd}) and \hyperref[eq:mweak]{(W)}, we obtain
\[\phi'(z)\geq \frac{1}{C}(m_z(\Delta p)^2+m_z^{s} (\Delta m)^2),\]
and integrating over $[0,1]$ then yields \eqref{eq: LasryLions2}. The proof of \eqref{eq: LasryLions1} follows from \eqref{eq:LL} in the same way. \end{proof}

Before proving Theorem \ref{LocalTheorem}, we remind the reader that assumption \hyperref[eq:M1]{(M)} will not be in place, and will be instead replaced by \hyperref[eq:mweak]{(W)}.
\begin{proof}[Proof of Theorem \ref{LocalTheorem}]
For $\epsilon \in (0,1)$, let $\me_0$, $\me_T$ be smooth, positive densities such that, for $\theta\in \{0,T\}$, 
\[\me_{\theta}\rightarrow m_{\theta}\, \text{ a.e. in}\, \T, \, ||\me_{\theta}||_{\infty}\leq C \, \text{ and } \, ||(\me_{\theta})^{-\kappa}||_{1}\leq C,\] where $C>0$ is a constant  independent of $\epsilon$. Let $(u^{\epsilon,1},m^{\epsilon,1})$ be a smooth solution to (\ref{PlanningSystem}) obtained from taking $\me_0$ and $\me_T$, respectively, as the initial and terminal densities. Similarly, let $(u^{\epsilon,2},m^{\epsilon,2})$ be the smooth solution to (\ref{MainMFGSystem}) corresponding to the initial density $\me_0$.  The existence and regularity of such solutions is guaranteed by Theorem \ref{MainTheorem}. We may further choose the $u^{\epsilon,1}$ to be normalized so that $\int_{\T}u^{\epsilon,1}(T)=0$.

As in the proof of Proposition \ref{LocalLowerBoundDensityProp}, we obtain, for some $C>0$ independent of $\epsilon$ and for $i\in\{1,2\}$,
\begin{equation} \label{eq:1/mL1bound} \|(m^{\epsilon,i})^{-\kappa}\|_{1}\leq C.\end{equation}
On the other hand, Corollary \ref{DisplacementConvexityCorBounds} and Proposition \ref{mAnduBounds} yield
\begin{equation} \label{eq:LocalmEpsilonUpperBound} \|m^{\epsilon,i}\|_{\infty}\leq C,\end{equation}
and \eqref{eq:LocalmEpsilonUpperBound}, \eqref{eq:Hm growth weak} and Proposition \ref{LocalLowerBoundDensityProp} imply that
\begin{equation}
    \int_{0}^{T}|H(0,\min_{\T} m^{\epsilon,i}(s)|ds\leq C.
\end{equation}
Thus, as a result of \eqref{eq:g(0)condition}, Proposition \ref{mAnduBounds}, and Proposition \ref{OscillationProp},
\begin{equation} \label{eq:LocaluEpsilonBounds} \|u^{\epsilon,i}\|_{\infty}\leq C. \end{equation}
We will first observe that, up to a subsequence, there is convergence to a weak solution. Indeed, given $0<\epsilon,\epsilon'<1$, applying the Lasry-Lions monotonicity method to the corresponding systems yields, for $i\in\{1,2\}$,

\begin{multline}
    \int_{\T} (u^{\epsilon,i}(T)-u^{\epsilon',i}(T))(m^{\epsilon,i}(T)-m^{\epsilon',i}(T))-\int_{\T} (u^{\epsilon,i}(0)-u^{\epsilon',i}(0))(m^{\epsilon,i}(0)-m^{\epsilon',i}(0)) \\
    +\int\int_{Q_T} \left(m^{\epsilon,i} H_p(u^{\epsilon,i}_x,m^{\epsilon,i})-m^{\epsilon',i} H_p(u^{\epsilon',i}_x,m^{\epsilon',i})\right)(u^{\epsilon,i}_x-u^{\epsilon',i}_x)\\
    -\left(H(u^{\epsilon,i}_x,m^{\epsilon,i})-H(u^{\epsilon',i}_x,m^{\epsilon',i})\right)(m^{\epsilon,i}-m^{\epsilon',i})=0.
\end{multline}
Lemma \ref{EstimateOfHForLasryLions} therefore yields
\begin{multline}
    \int_{\T} (u^{\epsilon,i}(T)-u^{\epsilon',i}(T))(m^{\epsilon,i}(T)-m^{\epsilon',i}(T))-\int_{\T} (u^{\epsilon,i}(0)-u^{\epsilon',i}(0))(m^{\epsilon,i}(0)-m^{\epsilon',i}(0)) \\
   + \int\int_{Q_T} \left(\frac{m^{\epsilon,i}+m^{\epsilon',i}}{C}(u^{\epsilon,i}_x-u^{\epsilon',i}_x)^2+\frac{1}{C(s+1)}((m^{\epsilon,i})^{s+1}-(m^{\epsilon',i})^{s+1})(m^{\epsilon,i}-m^{\epsilon',i}) \right)\leq 0.
\end{multline}
Proceeding as in \cite[Thm. 1.2]{Munoz1}, it readily follows that, for $i\in\{1,2\}$, as $\epsilon \rightarrow 0$, $(u^{\epsilon,i},m^{\epsilon,i})$ converges to a weak solution $(u^i,m^i)$. 

It remains to show the interior regularity. For $\delta>0$, we define
\[I_{1,\delta}=[\delta,T-\delta],\,\, I_{2,\delta}=[\delta,T].\]
By Proposition \ref{LocalLowerBoundDensityProp}, there exists $C=C(\delta^{-1})$ such that, for $t_i \in I_{i,\delta/4}$,
\begin{equation} \label{eq:LocalmEpsilonLowerBound} m^{\epsilon,i}(\cdot,t_i)\geq \frac{1}{C}.\end{equation}
We must first obtain a priori gradient bounds for $u^{\epsilon,i}$ on $I_{i,\delta/2}$. Setting
\[\phi_1(t)=(t-\delta/4)^{-2/(\gamma-1)}+(T-\delta/4-t)^{-2/(\gamma-1)}\,
\phi_2(t)=(t-\delta/4)^{-2/(\gamma-1)}, \]
we go through the steps of Proposition \ref{GradientEstimateProp}, replacing the function $v$ by 
\[v_i(x,t)=\frac{1}{2}(u^{\epsilon,i}_x)^2+\frac{1}{2}(\tilde{u}^{\epsilon,i})^2-K\phi_i(t),\]
where $K>0$, $\tilde{u}^{\epsilon,i}$ is defined as in Proposition \ref{GradientEstimateProp}. We consider the maximum point $(x_0,t_0)$ of $v_i$ in $\T \times I_{i,\delta/4}$. In the case of (\ref{PlanningSystem}), namely $i=1$, this maximum must be attained in the interior of $I_i$, since $\phi_i$ is unbounded near the endpoints. When $i=2$, the maximum may be attained at $t=T$, and the proof that $|p|\leq C$ in this case follows through unchanged from Case 1 of Proposition \ref{GradientEstimateProp}. If the maximum is achieved at an interior time, the steps of Proposition \ref{GradientEstimateProp} yield that if $v_i(x_0,t_0)$ is large enough, then
\[0\leq -|p|^{2\gamma}+|p|^{2\gamma-2}-K(-\phi_i''+\frac{1}{C}K^{\gamma}\phi_i^\gamma-C\phi_i).\]
Similarly to Proposition \ref{LocalLowerBoundDensityProp}, we see that, if $K$ is chosen large enough, $\phi_i$ must be a supersolution to
\[-\phi_i''+\frac{1}{C}K^{\gamma}\phi_i^\gamma-C\phi_i=0,\]
which then implies $p\leq C$, and thus $|u^{\epsilon,i}_x|$ is bounded on $I_{i,\delta/2}$. In view of \eqref{eq:LocalmEpsilonLowerBound} and \eqref{eq:LocalmEpsilonUpperBound}, $|u^{\epsilon,i}_t|=|H(u^{\epsilon,i}_x,m^{\epsilon,i})|$ is also bounded on $I_{i,\delta/2}$. That is, we have
\begin{equation} \label{LocalGradientBound}
    \|u^{\epsilon,i}\|_{C^1(\T \times I_{i,\delta/2})}\leq C.
\end{equation}
The interior $C^{1,\alpha}$-estimates for quasilinear elliptic equations (see \cite[Chapter 13, Thm. 13.6]{GilbargTrudinger}), followed by the interior Schauder estimates (see \cite[Chapter, 2, (1.12)]{Ladyzhenskaya}) then yield, for some $C=C(\delta^{-1})$, and for $i\in\{1,2\},$
\begin{equation}\label{interiorC3bound}||u^{\epsilon,i}||_{C^{3+\alpha}(\T \times I_{1,\delta})}\leq C.
\end{equation}For $i=1$, by virtue of the Arzelà--Ascoli theorem, we may finish the proof by simply letting $\epsilon \rightarrow 0$. On the other hand, for $i=2$ (that is, the case of \eqref{MainMFGSystem}), we require estimates up to the terminal time $T$. We first observe that \eqref{eq:LocalmEpsilonLowerBound}, \eqref{eq:LocalmEpsilonUpperBound}, and \eqref{interiorC3bound}  imply  that $u^{\epsilon,2}$ solves, in $I_{2,\delta}\times \T,$ a system of the form \eqref{MainMFGSystem}, where the initial density $m^{\epsilon,2}(\cdot,\delta)$ is bounded below by a positive constant, and bounded above in $C^{2,\alpha}(\T)$. Moreover, as in Lemma \ref{HolderGradientUepsilon}, \eqref{LocalGradientBound} implies that $u^{\epsilon,2}$ is bounded in $C^{1,\beta}$ for some $0<\beta<1$. We may now conclude through the same convergence argument as in the proof of Theorem \ref{MainTheorem}. \end{proof}

 Finally, by requiring some further regularity on the marginals, we establish additional Sobolev regularity for the weak solutions.\\
\begin{prop} \label{WeakRegularityProp}
Let $m_0,m_T$ satisfy $(m_0)_{xx}, (m_T)_{xx}\in L^1(\T)$. Let $(u,m)$ be a weak solution to (\ref{MainMFGSystem}) or (\ref{PlanningSystem}) under the assumptions of Theorem \ref{LocalTheorem}. Then, for some constant $C>0$ we have:
\begin{itemize}
\item In the case of (\ref{MainMFGSystem}),
\begin{equation}
    \intT g'(m(x,T))|m_x(x,T)|^2+\int_0^T\intT m(u_{xx})^2+m^s(m_x)^2dxdt \leq C,
\end{equation}
where $C=C(\|u\|_{\infty},\|(m_0)_{xx}\|_1,C_0)$.
\item In the case of (\ref{PlanningSystem}),
\begin{equation}
    \int_0^T\intT m(u_{xx})^2+m^s(m_x)^2dxdt \leq C,
\end{equation}
where $C=C(\|u\|_{\infty},\|(m_0)_{xx}\|_1,\|(m_T)_{xx}\|_1,C_0)$. 
\end{itemize}
\end{prop}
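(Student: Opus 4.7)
The plan is to apply the displacement convexity inequality of Proposition \ref{DisplacementProp} to the convex function $h(m)=m\log m-m+1$, whose second derivative is $h''(m)=1/m$, and then to integrate the resulting differential inequality in time. With this choice, after using the lower bound \eqref{eq:Hpp bd} on $H_{pp}$, the lower bound \eqref{eq:Hm growth weak} on $-H_m$, and the upper bound on $m$ from Corollary \ref{DisplacementConvexityCorBounds}, the conclusion of Proposition \ref{DisplacementProp} reduces to
\[
\frac{d^2}{dt^2}\intT h(m)\,dx \;\geq\; \frac{1}{C}\intT \bigl(m\,u_{xx}^2 + m^s\, m_x^2\bigr)\,dx.
\]

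I would first carry out the estimate for the smooth approximations $(\ue,\me)$ constructed in the proof of Theorem \ref{LocalTheorem}, chosen so that the smoothed marginals $m_0^\epsilon,m_T^\epsilon$ are strictly positive, converge to $m_0,m_T$ pointwise, and preserve a uniform $L^1$ bound on their second derivatives. Setting $E^\epsilon(t):=\intT h(\me(\cdot,t))\,dx$, integrating the differential inequality from $0$ to $T$ yields
\[
\frac{1}{C}\int_0^T\!\intT \bigl(\me\,(\ue_{xx})^2+(\me)^s\,(\me_x)^2\bigr)\,dx\,dt \;\leq\; (E^\epsilon)'(T)-(E^\epsilon)'(0),
\]
and a direct computation using $h'(m)=\log m$, the continuity equation and integration by parts in $x$ produces the formula $(E^\epsilon)'(t)=-\intT \me_x\,H_p(\ue_x,\me)\,dx$.

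For \eqref{MainMFGSystem}, the terminal boundary term is controlled in the spirit of Lemma \ref{DecreasingLemmaForDisplacement}: the condition $\ue(T)=g(\me(T))$ gives $\ue_x(T)=g'(\me(T))\me_x(T)$, and combining \eqref{RadialConditionH} with \eqref{eq:Hpp bd} yields the pointwise inequality $H_p(p,m)\,p\geq |p|^2/C$ (when $\gamma=2$, and with a suitable analogue to be combined with Young's inequality when $\gamma\neq 2$). This produces
\[
(E^\epsilon)'(T) \leq -\frac{1}{C}\intT g'(\me(T))\,\me_x(T)^2\,dx,
\]
which furnishes the $\intT g'(m(T))\,m_x(T)^2$ contribution of the first estimate and requires no a priori regularity on $\me(T)$. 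For \eqref{PlanningSystem}, the bound on $|(E^\epsilon)'(T)|$ will be derived in exactly the same way as the bound on $|(E^\epsilon)'(0)|$ discussed below, with $m_T^\epsilon$ replacing $m_0^\epsilon$.

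The main obstacle will be a uniform bound on
\[
(E^\epsilon)'(0) = -\intT (m_0^\epsilon)_x\,H_p(\ue_x(0),m_0^\epsilon)\,dx,
\]
depending only on $\|\ue\|_{L^\infty(Q_T)}$ and $\|(m_0^\epsilon)_{xx}\|_{L^1(\T)}$. The plan is to combine the Morrey inequality on the torus, $\|(m_0^\epsilon)_x\|_{L^\infty(\T)}\leq C\|(m_0^\epsilon)_{xx}\|_{L^1(\T)}$, with a trace estimate on $\|H_p(\ue_x(0),m_0^\epsilon)\|_{L^1(\T)}$. The latter is extracted from the weak identity (v) in Definition \ref{def:weaksol_def} combined with \eqref{eq:H superlinear growth}, which together provide a uniform bound on $\int_0^T\!\intT \me\,|\ue_x|^\gamma\,dx\,dt$ in terms of $\|\ue\|_{L^\infty}$; this space-time information is then localized to the boundary trace via integration by parts in time against a suitable cutoff, together with the fact that $E^\epsilon$ is convex so that $(E^\epsilon)'$ is monotone non-decreasing and its one-sided limit at $t=0$ may be controlled by an averaging argument on a small interval $[0,\delta]$. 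Once both boundary terms are controlled uniformly in $\epsilon$, the proof concludes by passing to the limit $\epsilon\to 0$, using Fatou's lemma and the lower semicontinuity of the integrand on the left-hand side with respect to the convergences established in the proof of Theorem \ref{LocalTheorem}.
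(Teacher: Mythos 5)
Your use of displacement convexity with $h(m)=m\log m - m + 1$ reproduces the correct spacetime integrand after applying \eqref{eq:Hpp bd} and \eqref{eq:Hm growth weak}, and your treatment of the terminal boundary term for \eqref{MainMFGSystem} parallels Lemma \ref{DecreasingLemmaForDisplacement}. However, the boundary term
\[
(E^\epsilon)'(0) = -\intT (m_0^\epsilon)_x\, H_p(u^\epsilon_x(\cdot,0),m_0^\epsilon)\,dx
\]
(and, for \eqref{PlanningSystem}, the analogous term at $t=T$) contains $H_p$ evaluated at $u^\epsilon_x(\cdot,0)$, which grows like $|u^\epsilon_x|^{\gamma-1}$, and there is no a priori bound on $u^\epsilon_x$ at $t=0$ in this weak setting (the interior gradient bounds of Theorem \ref{LocalTheorem} degenerate as $t\to 0^+$). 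Your plan to extract $\|H_p(u^\epsilon_x(\cdot,0),m_0^\epsilon)\|_{L^1(\T)}$ from the spacetime bound on $\int_0^T\intT m^\epsilon|u^\epsilon_x|^\gamma$ fails for two reasons: functions in $L^1$ in time admit no boundary trace, and the monotonicity of $(E^\epsilon)'$ only furnishes an \emph{upper} bound $(E^\epsilon)'(0)\leq \tfrac{1}{\delta}(E^\epsilon(\delta)-E^\epsilon(0))$, whereas what is needed to close the estimate $(E^\epsilon)'(T)-(E^\epsilon)'(0)\leq C$ is a \emph{lower} bound on $(E^\epsilon)'(0)$.

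The crucial structural feature that your argument is missing is that the boundary terms should appear in a form that admits an integration by parts transferring two $x$-derivatives onto the marginal. The paper's proof differentiates the whole system in $x$ and tests the equation for $m_x$ against $u_x$; the resulting boundary terms are $\intT m_x(\cdot,\theta)\,u_x(\cdot,\theta)\,dx = -\intT u(\cdot,\theta)\,(m_\theta)_{xx}\,dx$ for $\theta\in\{0,T\}$, which are trivially controlled by $\|u\|_\infty\|(m_\theta)_{xx}\|_{L^1(\T)}$. Because your boundary term involves $H_p(u_x,\cdot)$ instead of $u_x$ itself, no such integration by parts is available, and this is the step where your argument has a genuine gap.
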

\begin{proof}
We will show the result in the case where $(u,m)$ is smooth, since the general case follows by considering the approximations employed in the proof of Theorem \ref{LocalTheorem}. Differentiating with respect to $x$ the \eqref{MainMFGSystem} or \eqref{PlanningSystem}, we obtain 
\begin{equation}
    \begin{cases}
    - u_{xt}+H_p(u_x,m)u_{xx}+H_m(u_x,m)m_x=0 \text{ in }Q_T,\\
    m_{xt}-(m_x H_p(u_x,m)+mH_{pp}(u_x,m)u_{xx}+mH_{pm}(u_x,m)m_x)_x=0 \text{ in }Q_T.
    \end{cases}
\end{equation}
Testing against $u_x$ in the equation for $m_x$ above we obtain 
\begin{multline}\int_{\T} m_x(T)u_x(T)dx-\int_{\T} m_x(0)u_x(0)dx 
+\int_0^T\int_{\T} m_x(- u_{xt}+u_{xx}H_p(u_x,m))\\+mu_{xx}^2H_{pp}(u_x,m)+mu_{xx}H_{pm}(u_x,m)m_x dx=0, \end{multline}
and, therefore,
\[ \int_{\T} m_x(T)u_x(T)dx +\int_0^T\int_{\T} mu_{xx}^2H_{pp}- H_m(m_x)^2 dx\]
\[=-\int_{\T} u(0)(m_0)_{xx} dx-\int_0^T\int_{\T} mu_{xx}H_{pm}(u_x,m)m_xdx .\]
Next, we use the following bounds
\[\Big|\int_{\T} u(0)(m_0)_{xx} dx \Big|\leq \|u\|_{\infty}\|(m_0)_{xx}\|_1,\]
and, for $\delta \in (0,1)$,
\[\Big| mu_{xx}H_{pm}m_x\Big| \leq (1-\delta)mu_{xx}^2H_pp+\frac{1}{4(1-\delta)}m|H_{pm}|^2(m_x)^2 \]
\[\leq (1-\delta)mu_{xx}^2H_pp-\frac{4}{4(1-\delta)(1+\frac{1}{C_0})}H_m(m_x)^2,\]
where in the last inequality we used \eqref{eq:ellipticity}. Choose $\delta>0$ small enough so that 
\[\frac{1}{(1-\delta)(1+\frac{1}{C_0})}<1.\]
Hence, in the case of (\ref{MainMFGSystem}), we have the bound
\[\intT g'(m(T))(m_x(T))^2dx +\int_0^T\intT mH_{pp}(u_{xx})^2 dx-H_m(m_x)^2 dx\leq C+\|u\|_{\infty}\|(m_0)_{xx}\|_1 \]
while in the case of (\ref{PlanningSystem}), we have
\[\int_0^T\intT mH_{pp}(u_{xx})^2 dx-H_m(m_x)^2 dx\leq C+\|u\|_{\infty}\Big(\|(m_0)_{xx}\|_1+\|(m_T)_{xx}\|_1\Big). \]\end{proof}

\section{Long time behavior and the infinite horizon problem} \label{LongSection}
In this section, we will characterize the behavior, as $T\rightarrow \infty$, of solutions to \eqref{MainMFGSystem} and \eqref{PlanningSystem}. First, we establish the turnpike property with an exponential rate of convergence. This property shows that, for large values of $T$, the players spend most of their time close to the equilibrium $m \equiv 1$. 

\begin{lem} \label{lem: turnpike} Let $(u,m)$ be a solution to \eqref{MainMFGSystem} or \eqref{PlanningSystem}, let $T>1$, and set 
\[c_1=\min(\min m_0,\min m_T),\,\,C_1=\max(\max m_0,\max(m_T)).\] Then there exist constants $C,\omega>0$, with \[C=C(C_0,C_1,c_1^{-1},\|\overline{C}\|_{L^{\infty}([c_1,C_1])},\|(m_0)_x\|_{\infty},\|(m_T)_x\|_{\infty},\|(g')\|_{L^{\infty}([\min m_0,\max m_0])}^{-(\gamma-1)})\,\]
and
\[\omega^{-1}=\omega^{-1}(C_0,c_1^{-1},C_1,\|\overline{C}\|_{L^{\infty}([c_1,C_1])}),\]
such that
\begin{equation} \label{eq:turnpike 1}\|m(t)-1\|_{L^\infty(\T)} + \|u_x(t)\|_{L^\infty(\T)}\leq C(e^{-\omega t}+e^{-\omega(T-t)}), \,\,\, t\in [0,T].\end{equation}
If $(u,m)$ solves $\eqref{MainMFGSystem}$, and $\eqref{RadialConditionH}$ holds, we have
\begin{equation} \label{eq:turnpike 2} \|m(t)-1\|_{L^\infty(\T)} + \|u_x(t)\|_{L^\infty(\T)}\leq Ce^{-\omega t}, \,\,\, t\in [0,T].\end{equation}  \end{lem}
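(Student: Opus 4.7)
The strategy is to combine displacement convexity (Proposition \ref{DisplacementProp}) with Lasry-Lions monotonicity (Lemma \ref{EstimateOfHForLasryLions}) to produce second-order ODE inequalities for $L^2$-type Lyapunov functionals, apply a Phragmén-Lindelöf comparison, and lift the resulting $L^2$ decay to $L^\infty$ by one-dimensional interpolation. As a preliminary step, I would establish $T$-independent classical regularity: Corollary \ref{DisplacementConvexityCorBounds} gives $c_1\leq m\leq C_1$, whence $|H(0,\min_\T m(\cdot,s))|\leq C$; Proposition \ref{OscillationProp} then yields a uniform bound on $\text{osc}_{\overline{Q_T}}u$ for $T\geq 1$, Proposition \ref{GradientEstimateProp} upgrades this to $\|Du\|_{L^\infty(\overline{Q_T})}\leq C$, and interior Schauder estimates give uniform $C^{3,\alpha}\times C^{2,\alpha}$ bounds on $(u,m)$ with constants matching the dependence listed in the statement.

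For the decay of $m-1$, I would set $E(t):=\tfrac12\|m(\cdot,t)-1\|_{L^2(\T)}^2$ and apply Proposition \ref{DisplacementProp} with $h(m)=(m-1)^2/2$. Hypotheses \hyperref[eq:Hpp bd]{(H)}, \hyperref[eq:H long time]{(L)}, and the uniform bounds on $m$ then give $E''(t)\geq c_0\int_\T(m_x^2+u_{xx}^2)\,dx$, and the Poincaré inequality on the torus (applicable since $\int_\T(m-1)=\int_\T u_x=0$) strengthens this to
\[E''(t)\geq \omega^2\bigl(E(t)+\tfrac12\|u_x(\cdot,t)\|_{L^2(\T)}^2\bigr)\geq \omega^2 E(t).\]
The maximum principle applied to the subsolution inequality $-E''+\omega^2 E\leq 0$ on $[0,T]$, combined with the uniform bound $E(0)+E(T)\leq C$, produces $E(t)\leq C(e^{-\omega t}+e^{-\omega(T-t)})$; the one-dimensional interpolation $\|f\|_{L^\infty(\T)}^2\leq C\|f\|_{L^2(\T)}\|f'\|_{L^2(\T)}$ applied to $f=m-1$, together with the uniform $C^1$-bound on $m_x$, then yields the claimed $L^\infty$ bound (with $\omega$ halved).

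For the decay of $u_x$, I would introduce the Lasry-Lions cross-quantity $\phi(t):=\int_\T(u-\bar u)(m-1)\,dx$, where $\bar u(t):=\lambda(T-t)+c$ is the spatially constant equilibrium satisfying $-\bar u_t+H(0,1)=0$. Using both equations, integration by parts, and the cancellations $\int_\T u_x=\int_\T(m-1)=0$, one obtains
\[\phi'(t)=-\int_\T\bigl[(mH_p(u_x,m)-H_p(0,1))u_x-(H(u_x,m)-H(0,1))(m-1)\bigr]dx,\]
and Lemma \ref{EstimateOfHForLasryLions} together with \hyperref[eq:H long time]{(L)} yields $\phi'(t)\leq-c_1(\|u_x(\cdot,t)\|_{L^2}^2+\|m(\cdot,t)-1\|_{L^2}^2)$. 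In the MFG case, the monotonicity $g'>0$ gives $\phi(T)\geq 0$, whence $\phi\geq 0$ on $[0,T]$; combined with the Cauchy-Schwarz and Poincaré estimate $|\phi(t)|\leq C(\|u_x\|_2^2+\|m-1\|_2^2)/2$, this gives $\phi'\leq -c'\phi$ and hence $\phi(t)\leq\phi(0)e^{-c't}$. Integrating $\phi'\leq-c_1\|u_x\|_2^2$ over short time intervals and invoking the uniform time-Lipschitz bound from the preliminary step then converts this into pointwise $L^2$ decay of $u_x$, which the same interpolation lifts to $L^\infty$. For the improved one-sided estimate under \eqref{RadialConditionH}, Lemma \ref{DecreasingLemmaForDisplacement} gives $E$ nonincreasing on $[0,T]$, so $E(T)\leq\inf_{t\in[0,T]}E(t)$; minimising the two-sided bound via AM-GM yields $E(T)\leq CE(0)e^{-\omega T}$, and reinserting this into the Phragmén-Lindelöf estimate produces $E(t)\leq CE(0)e^{-\omega t}$.

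The main obstacle is the pointwise-in-time $L^2$ decay of $u_x$ in the planning case: the displacement convexity formula closes an ODE only for $E$ and produces $\|u_x\|_2^2$ as a mere source term, while the Lasry-Lions interaction $\phi$ is only monotone, without the sign information produced by $g'>0$ in the MFG case. For the planning problem one must therefore combine the integrated estimate $\int_0^T\|u_x\|_{L^2}^2\,dt\leq C$ with the two-sided $E$-decay and the uniform time-Lipschitz bounds, interpolating rather than closing a direct Grönwall-type ODE.
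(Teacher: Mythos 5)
Your treatment of $\|m(t)-1\|_{L^\infty}$ is essentially the paper's: apply Proposition~\ref{DisplacementProp} with $h(m)=(m-1)^2$, use \hyperref[eq:H long time]{(L)} and Poincar\'e to get $\phi''\geq\omega^2\phi$ for $\phi(t)=\int_\T(m-1)^2$, and close via comparison. The difference is cosmetic: the paper lifts the $L^2$ decay to $L^\infty$ by exploiting the convexity of every $t\mapsto\int_\T(m-1)^{2k}$, which preserves the rate, whereas your interpolation $\|f\|_\infty^2\leq C\|f\|_2\|f_x\|_2$ halves it. That is fine.

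The real divergence, and the gap, is in the $\|u_x(t)\|_{L^\infty}$ estimate. The paper does not introduce a Lasry--Lions cross-quantity at all; it observes that the displacement inequality in Proposition~\ref{DisplacementProp} already yields $\frac{1}{C}\int_\T u_{xx}^2(s)\,dx\leq\frac{d^2}{ds^2}\phi(s)$, so testing against a bump function gives a time-averaged $L^2$ bound $\int_{t-1/2}^{t+1/2}\int_\T u_{xx}^2\leq C\int_{t-1}^{t+1}\phi$. The crucial step, which your proposal is missing, is the conversion of this time-averaged $u_{xx}$ bound into a pointwise-in-time bound on $\operatorname{osc}_\T u_x(t)$: the paper differentiates \eqref{eq:quasilinear} in $x$, observes that $v=u_x$ solves a linear elliptic equation with no zero-order term, hence obeys the maximum and minimum principles on every horizontal strip $\T\times[t-s,t+s]$, and therefore $\operatorname{osc}_\T u_x(t)\leq\operatorname{osc}_\T u_x(t+s)+\operatorname{osc}_\T u_x(t-s)$, from which $\|u_x(t)\|_\infty^2\leq C\int_{t-1}^{t+1}\phi$ follows by integrating in $s$ and Cauchy--Schwarz. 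This elliptic device is what makes the estimate work uniformly for both \eqref{MainMFGSystem} and \eqref{PlanningSystem}, with no sign information from the boundary required. Your alternative --- the Lasry--Lions pairing $\psi(t)=\int_\T(u-\bar u)(m-1)$, with $\psi'\leq -c\|u_x\|_2^2$ --- yields only a time-integrated control on $\|u_x\|_2^2$, and while you can use the sign $\psi(T)\geq 0$ (from $g'>0$) to close a Gr\"onwall argument in the MFG case, you acknowledge you cannot do so for the planning case, and the vague remedy you propose (``combine the integrated estimate with the uniform time-Lipschitz bounds, interpolating'') is not an argument. Since the lemma is stated for both systems, and the planning case is the one that motivated the authors' detour via $u_{xx}^2$ and the max principle, this is a genuine gap, not a detail left to the reader.Your treatment of $\|m(t)-1\|_{L^\infty}$ is essentially the paper's: apply Proposition~\ref{DisplacementProp} with $h(m)=(m-1)^2$, use \hyperref[eq:H long time]{(L)} and the mean-zero property to get $\phi''\geq\omega^2\phi$ for $\phi(t)=\int_\T(m-1)^2$, and close via comparison with exponentials. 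The only difference is cosmetic: the paper lifts the $L^2$ decay to $L^\infty$ by exploiting the convexity of every map $t\mapsto\int_\T(m-1)^{2k}$, which preserves the rate, whereas your Gagliardo--Nirenberg interpolation $\|f\|_\infty^2\leq C\|f\|_2\|f_x\|_2$ halves it. That part is fine.

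The real divergence, and the gap, is in the $\|u_x(t)\|_{L^\infty}$ estimate. The paper does \emph{not} introduce a Lasry--Lions cross-quantity at all; it observes that the displacement inequality in Proposition~\ref{DisplacementProp} already produces $\frac{1}{C}\int_\T u_{xx}^2(s)\,dx\leq\frac{d^2}{ds^2}\phi(s)$, so testing against a bump function gives a time-averaged $L^2$ bound $\int_{t-1/2}^{t+1/2}\int_\T u_{xx}^2\leq C\int_{t-1}^{t+1}\phi$. The crucial step, missing from your proposal, is the conversion of this time-averaged bound on $u_{xx}$ into a pointwise-in-time bound on $\operatorname{osc}_\T u_x(t)$: the paper differentiates \eqref{eq:quasilinear} in $x$, notes that $v=u_x$ solves a linear elliptic equation with no zero-order term, hence obeys the maximum and minimum principles on every horizontal strip $\T\times[t-s,t+s]$, and therefore $\operatorname{osc}_\T u_x(t)\leq\operatorname{osc}_\T u_x(t+s)+\operatorname{osc}_\T u_x(t-s)$; integrating in $s$ and using Cauchy--Schwarz gives $\|u_x(t)\|_\infty^2\leq C\int_{t-1}^{t+1}\phi$. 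This elliptic device is what makes the estimate work uniformly for both \eqref{MainMFGSystem} and \eqref{PlanningSystem}, with no sign information from the terminal condition required. Your alternative --- the Lasry--Lions pairing $\psi(t)=\int_\T(u-\bar u)(m-1)$ with $\psi'\leq-c\|u_x\|_2^2$ --- yields a time-integrated control on $\|u_x\|_2^2$, and the sign $\psi(T)\geq 0$ (from $g'>0$) lets you close a Gr\"onwall argument in the MFG case, but, as you yourself acknowledge, the planning case lacks the sign information to close the loop, and the remedy you sketch (``combine the integrated estimate with the uniform time-Lipschitz bounds, interpolating'') is not an argument. Since the lemma is stated for both systems and the planning case is exactly the one the authors' $u_{xx}^2$-plus-maximum-principle route is designed to handle, this is a genuine gap, not a detail left to the reader.

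One further remark: for the sharpened one-sided bound \eqref{eq:turnpike 2}, the paper encodes Lemma~\ref{DecreasingLemmaForDisplacement} as a Robin boundary condition $\phi'(T)\leq-\frac{1}{\sqrt{C}}\phi(T)$ for the ODE, which gives the estimate at the original rate $\omega$; your AM--GM workaround gets there only for large $T$ and at a degraded rate. Minor, but worth knowing the cleaner mechanism exists.
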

\begin{proof}
As in previous arguments, we recall that the constant $C$ may increase at each step. For each $k\in \N$, Proposition \ref{DisplacementProp} yields
\begin{equation} \label{eq: (m-1)^k convex}\frac{d^2}{dt^2}\intT (m-1)^{2k}dx\geq 0,\end{equation}
and, as a result of \hyperref[eq:H long time]{(L)}, \eqref{eq:Hpp bd}, and Corollary \ref{DisplacementConvexityCorBounds},
\[\frac{d^2}{dt^2}\intT (m-1)^{2}dx\geq\frac{1}{C}\intT -mH_mmH_{pp}m_x^2 dx\geq\frac{1}{C}\intT \left|(m-1)_x\right|^2 dx.\]
Since $\intT m(\cdot,t)\equiv1$, arguing in the same way as in \eqref{eq: FTC appli}, we obtain
\[\frac{d^2}{dt^2}\intT (m-1)^{2}dx\geq \frac{1}{C}\Big\| m-1\Big\|_{\infty}^{2}.\]
Therefore, setting 
\[\phi(t):=\intT (m(t)-1)^{2}dx,\]
we have
\begin{equation}\label{eq:ODE phi}
    -\phi''+\frac{1}{C} \phi \leq 0.
\end{equation}
Moreover, if $(u,m)$ solves \eqref{MainMFGSystem} and \eqref{RadialConditionH} holds, up to increasing the value of $C$, Lemma \eqref{DecreasingLemmaForDisplacement} implies that
\begin{equation}\label{eq:Robin phi}\phi'(T)\leq -\frac{1}{\sqrt{C}}\phi(T).\end{equation}
We now fix the choice $\omega=\frac{1}{2\sqrt{C}}$ (the value of $C$ may still increase in subsequent steps, but the value of $\omega$ will not). The comparison principle applied to \eqref{eq:ODE phi} then implies that, for each $t\in [0,T]$,
\begin{equation} \label{eq:phi2}\phi(t)\leq \phi (0) e^{-2\omega t}+\phi(T)e^{-2\omega(T-t)}\leq C(e^{-2\omega t}+e^{-2\omega(T-t)}). \end{equation}
Similarly, if $(u,m)$ solves \eqref{MainMFGSystem} and \eqref{RadialConditionH}, then  \eqref{eq:ODE phi}, coupled with the Robin boundary condition \eqref{eq:Robin phi}, readily implies that
\begin{equation} \label{eq:phi3}\phi(t)\leq \phi(0)e^{-2\omega t}\leq Ce^{-2\omega t}. \end{equation}
By using the same convexity arguments as in \eqref{eq: convexity argu}, in view of \eqref{eq: (m-1)^k convex}, we have
\begin{equation} \label{eq:m-1 bound 2}\|m(t)-1\|_{\infty}^2\leq C\int_{t-\frac{1}{2}}^{t+\frac{1}{2}}\|m(s)-1\|_{\infty}(s)^2ds\leq C\int_{t-1}^{t+1}\intT (m-1)^2=C\int_{t-1}^{t+1} \phi(s)ds. \end{equation}
We now turn our attention to estimating $u_x$. Fixing $t\in [1,T-1]$, as a result of \eqref{eq:Hpp bd}, Proposition \ref{DisplacementProp}, and Corollary \ref{DisplacementConvexityCorBounds}, we obtain, for $s\in [t-1,t+1]$,
\[\frac{1}{C}\intT u_{xx}^2(s)\leq \frac{d^2}{ds^2}\intT (m(s)-1)^2.\]
Thus, testing against a bump function $\zeta\geq 0$, which is supported on $[t-1,t+1]$, and identically equals $1$ on $[t-\frac{1}{2},t+\frac{1}{2}]$, we get
\begin{equation} \label{eq:uxx inter}\int_{t-\frac{1}{2}}^{t+\frac{1}{2}}\intT u_{xx}^2\leq C\int_{t-1}^{t+1}\intT (m-1)^2\zeta'' \leq C\int_{t-1}^{t+1}\phi(s)ds.\end{equation}
Differentiating \eqref{eq:quasilinear} with respect to $x$, one sees that $v=u_x$ solves a linear elliptic equation of the form
\[-\text{Tr}(A(x,t)D^2v)+b(x,t)\cdot Dv=0.\]
Thus, $v$ satisfies the maximum and minimum principles on compact subsets of $\overline{Q}_T$. Applying this observation to $\T \times [t-s,t+s]$, for $s\in (0,\frac{1}{2})$, as well as the fact that, for every $t\in[0,T]$, $\{x\in \T : u_x(x,t)=0\}\neq \emptyset$, we have
\[\text{osc}_{\T}v(t)\leq \text{osc}_{\T}v(t+s)+\text{osc}_{\T}v(t-s)\leq \intT |u_{xx}(t+s)|+\intT |u_{xx}(t-s)|. \]
Integrating in $s$ then yields
\[\text{osc}_{\T}u_x(t)\leq \int_{t-\frac{1}{2}}^{t+\frac{1}{2}}\intT|u_{xx}|,\]
and, thus, as a result of \eqref{eq:uxx inter} and the Cauchy-Schwarz inequality, 
\begin{equation}\label{eq:ux turnpike bd}\|u_x(t)\|_{\infty}^2\leq C\int_{t-1}^{t+1}\phi(s)ds.\end{equation}
Now, adding \eqref{eq:m-1 bound 2} and \eqref{eq:ux turnpike bd}, followed by \eqref{eq:phi2}, we obtain \eqref{eq:turnpike 1} for $t\in [1,T-1]$. Similarly,  when $(u,m)$ solves \eqref{MainMFGSystem} and \eqref{RadialConditionH} holds, \eqref{eq:phi3} yields \eqref{eq:turnpike 2} for $t\in [1,T-1]$. We observe that, for $t\in [0,T] \backslash [1,T-1]$, the bounds on $\|m(t)-1\|_{\infty}$ given by \eqref{eq:turnpike 1} and \eqref{eq:turnpike 2} hold trivially, up to increasing the value of $C$. Let us see that the same is true for the bounds on $\|u_x(t)\|_{\infty}$ on the interval $[0,1]$. Indeed, we may simply follow the proof of Proposition \ref{GradientEstimateProp}, applied to  the MFG system on the domain $\T \times [0,1]$, with the only change being on Case 1 of that proof, that is, when the maximum value is attained at $t=1$. For this case, we may simply use the fact that, as a result of \eqref{eq:turnpike 1} holding for $t=1$, $|u_x(\cdot,1)|$ is bounded. Thus, if we take $T=1$ in Proposition \ref{OscillationProp}, this yields a bound on $\|u_x\|_{\T \times [0,1]}$ that depends only on $C_0,\|m\|_{L^{\infty}(\overline{Q}_T)},\|m^{-1}\|_{L^{\infty}(\overline{Q}_T)},\|(m_0)_x\|_{\infty},$ and $\|\overline{C}\|_{L^{\infty}([\min m,\max m])}$. A similar argument may be followed on $\T \times [T-1,T]$, which completes the proof. \end{proof}

Having established the turnpike property, we now follow the program developed in \cite{CirantPor} to study the long time behavior. In order to characterize the limit, as $T\rightarrow \infty$, of the functions $(u(t)-\lambda (T-t),m(t))$, we first show a uniqueness result for \eqref{InfiniteHorizonSystem}.

\begin{lem} \label{lem: uniqueness lemma} Assume that \hyperref[eq:H long time]{(L)} holds. Then, up to adding a constant to $v$, there exists at most one classical solution $(v,\mu)$ to \eqref{InfiniteHorizonSystem} satisfying \eqref{(v,mu) space}.\end{lem}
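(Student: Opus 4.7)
The plan is to adapt the Lasry--Lions monotonicity argument to the infinite horizon setting, using the time-integrability assumption on $\mu - 1$ to kill the boundary term at infinity along a subsequence. Let $(v_1,\mu_1)$ and $(v_2,\mu_2)$ be two classical solutions to \eqref{InfiniteHorizonSystem} satisfying \eqref{(v,mu) space}. I would start by differentiating the coupling quantity $t \mapsto \intT(v_1-v_2)(\mu_1-\mu_2)\,dx$: using the HJ equations (the constant $\lambda$ cancels), the continuity equations, and integration by parts in $x$, this derivative equals
\[
\intT (\mu_1-\mu_2)\bigl(H(v_{1,x},\mu_1)-H(v_{2,x},\mu_2)\bigr) - \intT(v_{1,x}-v_{2,x})\bigl(\mu_1 H_p(v_{1,x},\mu_1)-\mu_2 H_p(v_{2,x},\mu_2)\bigr).
\]

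Next I would apply Lemma~\ref{EstimateOfHForLasryLions}, noting that assumptions \eqref{(v,mu) space} give uniform bounds $c \leq \mu_i \leq C$, so that by \hyperref[eq:H long time]{(L)} the quantity $k = \min(-H_m) \geq 1/\overline{C}(C) > 0$ is controlled. This yields
\[
\frac{d}{dt}\intT (v_1-v_2)(\mu_1-\mu_2)\,dx \leq -\frac{1}{C}\intT\bigl[(\mu_1+\mu_2)(v_{1,x}-v_{2,x})^2 + (\mu_1-\mu_2)^2\bigr]\,dx.
\]
Integrating from $0$ to an arbitrary $T>0$ and using $\mu_1(\cdot,0)=\mu_2(\cdot,0)=m_0$ to kill the initial boundary term, I obtain
\[
\intT (v_1-v_2)(T)(\mu_1-\mu_2)(T)\,dx + \frac{1}{C}\int_0^T\intT\bigl[(\mu_1+\mu_2)(v_{1,x}-v_{2,x})^2 + (\mu_1-\mu_2)^2\bigr]\,dx\,dt \leq 0.
\]

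The key step, and main obstacle, is handling the boundary term at $T$ in the limit $T\to\infty$. Here I exploit the fact that $\mu_i - 1 \in L^1(\T\times(0,\infty))$: by Fubini and a standard subsequence extraction, there exist times $T_n \to \infty$ with $\|\mu_1(T_n)-1\|_{L^1(\T)} + \|\mu_2(T_n)-1\|_{L^1(\T)} \to 0$. Since $v_1, v_2 \in W^{1,\infty}(\T\times(0,\infty))$, the function $v_1 - v_2$ is uniformly bounded, and hence
\[
\Bigl|\intT(v_1-v_2)(T_n)(\mu_1-\mu_2)(T_n)\,dx\Bigr| \leq \|v_1-v_2\|_{\infty}\bigl(\|\mu_1(T_n)-1\|_{L^1(\T)}+\|\mu_2(T_n)-1\|_{L^1(\T)}\bigr) \xrightarrow{n\to\infty} 0.
\]

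Taking $T = T_n$ and letting $n\to\infty$ forces both nonnegative integrands on the right to vanish, so $v_{1,x} \equiv v_{2,x}$ and $\mu_1 \equiv \mu_2$ on $\T\times(0,\infty)$. Consequently $v_1 - v_2$ depends only on $t$; subtracting the two HJ equations then gives $(v_1-v_2)_t=0$, so $v_1 - v_2$ is a constant, which is exactly the claimed uniqueness up to additive constants.
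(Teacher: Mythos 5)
Your proof is correct and follows essentially the same route as the paper: a Lasry--Lions monotonicity computation on $Q_{T_k}$, the quantitative coercivity of Lemma~\ref{EstimateOfHForLasryLions} together with the $L^\infty$ bounds on $\mu_i,(\mu_i)^{-1},v_{i,x}$ from \eqref{(v,mu) space}, and the $L^1$-in-time integrability of $\mu_i-1$ combined with the boundedness of $v_i$ to make the terminal boundary term vanish along a subsequence $T_k\to\infty$. The conclusion ($\mu_1=\mu_2$, $v_{1,x}=v_{2,x}$, then $v_{1,t}=v_{2,t}$ from the HJ equation, hence $v_1-v_2$ constant) is identical.
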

\begin{proof} Assume that $(v^1,\mu^1),(v^2,\mu^2)$ are solutions to \eqref{InfiniteHorizonSystem} satisfying \eqref{(v,mu) space}. Since $\mu^1-1,\mu^2-1\in L^1(\T \times (0,\infty))$, there exists a sequence $T_k\rightarrow \infty$ such that
\[\lim_{k\rightarrow \infty} \intT \left(|\mu^1(\cdot,T_k)-1|+|\mu^2(\cdot,T_k)-1|\right) = 0.\]
Performing the standard Lasry-Lions computation for $v^1,v^2$ on $Q_{T_k}$, using Lemma \ref{EstimateOfHForLasryLions}, and noting that \[\mu^i,(\mu^i)^{-1},v^i_x, \in L^{\infty}(\T \times (0,\infty)),\,\,\,\, i\in\{1,2\},\] we obtain
\begin{multline}\frac{1}{C}\left(\int_0^{T_k} \intT |v^1_x-v^2_x|^2+ |\mu^1-\mu^2|^2\right) \leq \intT -(v^1(T_k)-v^2(T_k))(\mu^1(T_k)-\mu^2(T_k))\\=\intT -(v^1(T_k)-v^2(T_k))((\mu^1(T_k)-1)-(\mu^2(T_k)-1)).\end{multline}
Now, since $v^1,v^2 \in L^{\infty}(\T \times (0,\infty))$, the right hand side converges to $0$ as $k\rightarrow \infty$. Therefore,
\[\int_0^{\infty} \intT |v^1_x-v^2_x|^2+ |\mu^1-\mu^2|^2=0.\]This implies that $\mu^1=\mu^2$ and $v^1_x=v^2_x$. From the HJ equations, $v^1_t=v^2_t$, which concludes the proof. \end{proof}
In the following lemma, we obtain uniform estimates for the solution that are independent of $T$.
\begin{lem}\label{lem: MainEstimatesForLongTimeThm}
Let $(u^T,m^T)$ be a solution to \eqref{MainMFGSystem} or \eqref{PlanningSystem} for $T>0$, and let $\omega>0$ be the constant from Lemma \ref{lem: turnpike}. Set $v^T=u^T-\lambda (T-t)$. Then there exists a constant $C>0$, independent of $T$, such that:
\begin{itemize}
    \item If \eqref{RadialConditionH} holds and $(u^T,m^T)$ solves \eqref{MainMFGSystem}, then
    \begin{equation}\label{eq:ConstantEstimateMFG}
        | v^T(t) - g(1)| \leq Ce^{-\omega t} \text{ for all }t\in [0,T]. 
    \end{equation}
    \item If $(u^T,m^T)$ solves \eqref{PlanningSystem}, and 
    \begin{equation}\label{eq:NormalizationMFGPT/2}
        \int_{\T} v^T\left(\frac{1}{2}T\right)dx =0,
    \end{equation}
    then we have
    \begin{equation}\label{eq:TimeBoundForMFGPMiddleTimes}
        \|v^T\|_{L^{\infty}(Q_T)}\leq C
    \end{equation}
    and
    \begin{equation}\label{eq:ConstantEstimateBoundMFGP}
        \|v^T\|_{L^{\infty}(Q_{\frac{T}{2}})}\leq Ce^{-\omega t}.
    \end{equation}
\end{itemize}
\end{lem}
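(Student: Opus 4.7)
The plan is to combine the pointwise estimates from Lemma \ref{lem: turnpike} with the structure of the HJ equation. Since $\lambda = -H(0,1)$, a direct computation from the HJ equation gives
\[v^T_t = H(u^T_x, m^T) - H(0,1).\]
From Corollary \ref{DisplacementConvexityCorBounds}, Proposition \ref{mAnduBounds}, and Lemma \ref{lem: turnpike}, the pair $(u^T_x, m^T)$ stays in a compact subset of $\R \times (0,\infty)$ that is uniform in $T$, so applying the mean value theorem to $H$ on this compact set yields
\[|v^T_t(x,t)| \leq C\bigl(|u^T_x(x,t)| + |m^T(x,t) - 1|\bigr) \leq C\rho(t,T),\]
where, by Lemma \ref{lem: turnpike}, one can take $\rho(t,T) = e^{-\omega t}$ in the MFG case under \eqref{RadialConditionH}, and $\rho(t,T) = e^{-\omega t} + e^{-\omega(T-t)}$ in general.

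For the MFG case \eqref{eq:ConstantEstimateMFG}, I would exploit the terminal condition $u^T(\cdot,T) = g(m^T(\cdot,T))$, which gives $v^T(\cdot,T) = g(m^T(\cdot,T))$. Using $g \in C^1$ and the bound $\|m^T(T)-1\|_\infty \leq Ce^{-\omega T}$ from Lemma \ref{lem: turnpike}, one has $|v^T(x,T) - g(1)| \leq Ce^{-\omega T}$. Integrating the bound on $v^T_t$ backward from $T$,
\[|v^T(x,t) - g(1)| \leq |v^T(x,T) - g(1)| + \int_t^T |v^T_t(x,s)|\,ds \leq Ce^{-\omega T} + \tfrac{C}{\omega} e^{-\omega t} \leq Ce^{-\omega t}.\]

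For the MFGP case, I would split $v^T$ into its spatial mean and oscillation. Since $v^T_x = u^T_x$, Lemma \ref{lem: turnpike} gives the oscillation estimate $|v^T(x,t) - \bar v^T(t)| \leq C\rho(t,T)$ where $\bar v^T(t) := \int_\T v^T(\cdot,t)\,dx$. The mean satisfies $|(\bar v^T)'(t)| \leq C\rho(t,T)$, and the normalization $\bar v^T(T/2) = 0$ allows one to integrate from $T/2$: using $\int_t^{T/2}(e^{-\omega s} + e^{-\omega(T-s)})\,ds \leq C e^{-\omega t}$ on $[0,T/2]$ (since $e^{-\omega T/2} \leq e^{-\omega t}$ there) and the symmetric estimate on $[T/2, T]$, one obtains $|\bar v^T(t)| \leq C(e^{-\omega t} + e^{-\omega(T-t)}) \leq C$, which combined with the oscillation bound yields \eqref{eq:TimeBoundForMFGPMiddleTimes}. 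On $[0,T/2]$ both summands dominate by $e^{-\omega t}$, giving $|v^T(x,t)| \leq Ce^{-\omega t}$, i.e. \eqref{eq:ConstantEstimateBoundMFGP}.

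The proof is essentially a consequence of Lemma \ref{lem: turnpike} once one observes the HJ identity for $v^T_t$. The only real subtlety is in the MFGP case, where the lack of a terminal condition on $u^T$ makes it necessary to split $v^T$ into spatial mean and oscillation and use the symmetry in time afforded by the normalization at $T/2$; I do not anticipate any serious technical obstacle.
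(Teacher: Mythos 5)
Your proposal is correct and follows essentially the same route as the paper: establish $|v^T_t| \leq C(|v^T_x| + |m^T-1|)$ from the HJ equation and the a priori bounds, then integrate backward from $T$ in the MFG case (using the terminal condition $v^T(T)=g(m^T(T))$), and in the MFGP case decompose $v^T$ into its spatial mean and oscillation, bounding the mean by integrating from $T/2$ with the normalization and bounding the oscillation by the turnpike estimate on $\|u^T_x\|_\infty$. The paper phrases the mean/oscillation split via a point $x_t$ where $v^T(x_t,t)=\int_\T v^T(\cdot,t)$, which is equivalent to your formulation with $\bar v^T(t)$.
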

\begin{proof}
First we note that in both \eqref{MainMFGSystem} and \eqref{PlanningSystem}, as a result of Lemma \ref{lem: turnpike}, the function $v^T_x=u^T_x$ is bounded uniformly, independently of $T$, and, by Corollary \ref{DisplacementConvexityCorBounds}, so are $m^T,(m^T)^{-1}$. Therefore, since $H$ is smooth, and thus locally Lipschitz, we have, for some constant $C>0$ independent of $T>0$,
\begin{equation}\label{eq:TimeDerivativeBoundHJForvT}
    |v_t^T|\leq C(|v_x^T|+|m^T-1|).
\end{equation}
Assume first that $(u^T,m^T)$ solves \eqref{MainMFGSystem} and \eqref{RadialConditionH} holds. Integrating the HJ equation in $[t,T]$ and using \eqref{eq:TimeDerivativeBoundHJForvT} along with \eqref{eq:turnpike 2} in Lemma \ref{lem: turnpike} we obtain
\[|v^T(t)-v^T(T)|\leq C\int_t^T e^{-\omega s}ds.\]
Furthermore, using the fact that 
\[v^T(T)=u^T(T)=g(m^T(T)),\]
and 
\[|m^T(T)-1|\leq Ce^{-\omega T},\]
by increasing the constant $C$ if necessary, we obtain
\[ |v^T(t)-g(1)|\leq C(e^{-\omega T}+e^{-\omega t})\leq 2Ce^{-\omega t}, \]
which proves \eqref{eq:ConstantEstimateMFG}.
Next, we assume that $(u^T,m^T)$ solves \eqref{PlanningSystem} and \eqref{eq:NormalizationMFGPT/2} holds. Letting $t<\frac{T}{2}$, and integrating the HJ equation in $[t,\frac{T}{2}]$, we obtain from \eqref{eq:TimeDerivativeBoundHJForvT} and \eqref{eq:turnpike 1} that
\begin{equation} \label{avg bd 2} \Big|\int_{\T} v^T(\cdot,t)\Big|\leq C\int_t^{\frac{T}{2}} e^{-\omega s}+e^{-\omega(T-s)}ds\leq \frac{2C}{\omega}\Big(e^{-\omega t}+e^{-\omega \frac{T}{2}}\Big)\leq \frac{4C}{\omega}e^{-\omega t}.\end{equation}
Similarly, for $t\geq\frac{T}{2}$ integrating the HJ equation in $[\frac{T}{2},t]$ yields
\begin{equation} \label{eq:avg bd 1}\Big|\int_{\T} v^T(\cdot,t)dx \Big|\leq C. \end{equation}
Now, for every $t\in [0,T]$, there exists a point $x_t\in \T$ such that $v^T(x_t,t)=\int_{\T} v^T(\cdot,t)$. Therefore, 
\[ |v^T(x,t)|\leq \text{osc}_{\T}v^T(t)+\Big|\int_{\T} v^T(\cdot,t)\Big|.\]
As a result, in view of \eqref{eq:turnpike 1}, the estimates \eqref{eq:avg bd 1} and \eqref{avg bd 2} yield, respectively, \eqref{eq:TimeBoundForMFGPMiddleTimes} and \eqref{eq:ConstantEstimateBoundMFGP}.
\end{proof}
We are now ready to prove our last result.

\begin{proof} [Proof of Theorem \ref{thm: long time}]
We set
\[v^T=u^T-\lambda (T-t),\]
and show that $v^T$ is is convergent as $T\rightarrow \infty$.

In view of Lemmas \ref{lem: turnpike} and \ref{lem: MainEstimatesForLongTimeThm}, as well as \eqref{eq:TimeDerivativeBoundHJForvT}, we see that $\|v^T\|_{W^{1,\infty}(Q_T)}$ and $\|m^T\|_{\infty}$ are bounded, independently of $T$. We may therefore apply the Arzelà--Ascoli theorem to conclude that, up to extracting a subsequence, there exist $v\in W^{1,\infty}(\T \times [0,\infty))$ and $\mu \in L^{\infty} (\T \times [0,\infty))$ such that
\[v^T \rightarrow v \text{ locally uniformly in } \T \times [0,\infty),\]
and
\[m^T \rightharpoonup \mu \text{ weakly--* in } L^{\infty}(\T \times (0,\infty)).\]
We now fix $T_0\in (1,\infty)$, and assume that $T>T_0+1$. Then $(v^T,m^T)$ solves the system
\begin{equation}
    \begin{cases}
    -v^T_t+\lambda+H(v^T_x,m^T)=0 & \text{ in }Q_{T_0},\\
    m^T_t-(m^TH_p(v^T_x,m^T))_x=0 & \text{ in }Q_{T_0}, \\
    m^T(\cdot,0)=m_0.
    \end{cases}
\end{equation}
Moreover, as a result of the interior $C^{1,\alpha}$ estimates for quasilinear elliptic equations, and the interior Schauder estimates for linear equations, $m^T(\cdot,T_0)$ is uniformly bounded in $C^{2,\alpha+\epsilon}$, where $\epsilon>0$ is chosen such that $\alpha+\epsilon<1$. Therefore, as in the proof of Theorem \ref{MainTheorem}, we conclude that, as $T\rightarrow \infty$, \begin{equation}
(v^T,m^T)\rightarrow(v,\mu) \text{ in } C^{3,\alpha}(\T  \times [0,T_0])\times C^{2,\alpha}(\T  \times [0,T_0]).\end{equation} 
In particular, this implies that $(v,\mu)\in C^{3,\alpha}_{\text{loc}}(\T \times [0,\infty))\times C^{2,\alpha}_{\text{loc}}(\T \times [0,\infty))$, and that $(v,\mu)$ solves \eqref{InfiniteHorizonSystem}. Letting $T\rightarrow \infty$ in \eqref{eq:turnpike 1} yields
\begin{equation} \label{eq: turnpike limit}
    \|\mu(t)-1\|_{\infty}+ \|v_x(t)\|_{\infty}\leq Ce^{-\omega t},
\end{equation} which shows that $\mu-1 \in L^1(\T \times (0,\infty))$. Moreover, since $\|(m^T)^{-1}\|_{\infty}$ is bounded, we conclude that \eqref{(v,mu) space} holds.

Now, since a subsequence was extracted, we must verify that the limit is uniquely determined. In view of Lemma \ref{lem: uniqueness lemma}, $\mu$ is uniquely determined, and $v$ is uniquely determined up to a constant. In the case of \eqref{MainMFGSystem}  we see from \eqref{eq:ConstantEstimateMFG} that 
\[\lim\limits_{t\rightarrow \infty} \|v(t)-g(1)\|_{\infty} =0.\]
On the other hand, in the case of \eqref{PlanningSystem},  letting $T\rightarrow \infty$ followed by $t\rightarrow \infty$ in \eqref{eq:ConstantEstimateBoundMFGP}, we obtain
\[\lim\limits_{t\rightarrow \infty}\|v(t)\|_{\infty}=0.\]

\end{proof}

\section*{Acknowledgements}
 The authors would like to thank P.E. Souganidis, P. Cardaliaguet, and A. Porretta for valuable discussions and suggestions. The authors were partially supported by P. E. Souganidis' NSF grant DMS-1900599, ONR grant N000141712095 and AFOSR  grant FA9550-18-1-0494.
\bibliographystyle{abbrv} 
\bibliography{references} 

\begin{thebibliography}{10}

\bibitem{Gomes2}
T.~Bakaryan, R.~Ferreira, and D.~Gomes.
\newblock Some estimates for the planning problem with potential.
\newblock {\em NoDEA Nonlinear Differential Equations Appl.}, 28(2):1--23,
  2021.

\bibitem{Cardaliaguet}
P.~Cardaliaguet.
\newblock Weak solutions for first order mean field games with local coupling.
\newblock In {\em Analysis and geometry in control theory and its
  applications}, pages 111--158. Springer, 2015.

\bibitem{CardaliaguetGraber}
P.~Cardaliaguet and P.~J. Graber.
\newblock Mean field games systems of first order.
\newblock {\em ESAIM Control Optim. Calc. Var.}, 21(3):690--722, 2015.

\bibitem{CardaliaguetGraberPorrettaTonon}
P.~Cardaliaguet, P.~J. Graber, A.~Porretta, and D.~Tonon.
\newblock Second order mean field games with degenerate diffusion and local
  coupling.
\newblock {\em NoDEA Nonlinear Differential Equations Appl.}, 22(5):1287--1317,
  2015.

\bibitem{CardLasLiPor}
P.~Cardaliaguet, J.-M. Lasry, P.-L. Lions, and A.~Porretta.
\newblock Long time average of mean field games.
\newblock {\em Networks \& Heterogeneous Media}, 7(2):279, 2012.

\bibitem{CardPor}
P.~Cardaliaguet and A.~Porretta.
\newblock Long time behavior of the master equation in mean field game theory.
\newblock {\em Analysis \& PDE}, 12(6):1397--1453, 2019.

\bibitem{CardaliaguetPorretta}
P.~Cardaliaguet and A.~Porretta.
\newblock An introduction to mean field game theory.
\newblock In {\em Mean Field Games}, pages 1--158. Springer, 2020.

\bibitem{CirantPor}
M.~Cirant and A.~Porretta.
\newblock Long time behaviour and turnpike solutions in mildly non-monotone
  mean field games.
\newblock {\em ESAIM: Control Optim. Calc. Var.}, 27, 2021.

\bibitem{Evans}
L.~C. Evans.
\newblock Some new {PDE} methods for weak {KAM} theory.
\newblock {\em Calc. Var. Partial Differential Equations}, 17(2):159--177,
  2003.

\bibitem{GilbargTrudinger}
D.~Gilbarg and N.~S. Trudinger.
\newblock {\em Elliptic partial differential equations of second order}, volume
  224.
\newblock springer, 2015.

\bibitem{Gomes}
D.~Gomes and T.~Seneci.
\newblock Displacement convexity for first--order mean-field games.
\newblock {\em Minimax Theory Appl.}, 3(2):261–284, 2018.

\bibitem{Gomes3}
D.~A. Gomes, H.~Mitake, and K.~Terai.
\newblock The selection problem for some first--order stationary mean-field
  games.
\newblock {\em Netw. Heterog. Media}, 15(4):681--710, 2020.

\bibitem{weakMFGP1}
P.~J. Graber, A.~R. M{\'e}sz{\'a}ros, F.~J. Silva, and D.~Tonon.
\newblock The planning problem in mean field games as regularized mass
  transport.
\newblock {\em Calc. Var. Partial Differential Equations}, 58(3):1--28, 2019.

\bibitem{caines}
M.~Huang, R.~P. Malham{\'e}, and P.~E. Caines.
\newblock Large population stochastic dynamic games: closed-loop mckean-vlasov
  systems and the nash certainty equivalence principle.
\newblock {\em Commun. Inf. Syst.}, 6(3):221--252, 2006.

\bibitem{Ladyzhenskaya}
O.~Ladyzhenskaya and N.~Ural'tseva.
\newblock Linear and quasilinear elliptic equations 1st ed, 1968.

\bibitem{LasryLions}
J.-M. Lasry and P.-L. Lions.
\newblock Mean field games.
\newblock {\em Jpn.J.Math.}, 2(1):229--260, 2007.

\bibitem{SantambrogioLavenant}
H.~Lavenant and F.~Santambrogio.
\newblock Optimal density evolution with congestion: $\text{L}^{\infty}$ bounds
  via flow interchange techniques and applications to variational mean field
  games.
\newblock {\em Comm. Partial Differential Equations}, 43(12):1761--1802, 2018.

\bibitem{Lieberman}
G.~M. Lieberman.
\newblock The nonlinear oblique derivative problem for quasilinear elliptic
  equations.
\newblock {\em Nonlinear Anal.}, 8(1):49--65, 1984.

\bibitem{Lions}
P.-L. Lions.
\newblock Courses at the collège de france. www.college-de-france.fr.

\bibitem{Takis}
P.-L. Lions and P.~E. Souganidis.
\newblock Extended mean-field games.
\newblock {\em Atti Accad. Naz. Lincei Cl. Sci. Fis. Mat. Natur.},
  31(3):611--625, 2020.

\bibitem{Munoz1}
S.~Munoz.
\newblock Classical and weak solutions to local first-order mean field games
  through elliptic regularity.
\newblock {\em Ann. I. H. Poincaré Anal. Non Linéaire}, 39(1):1--39, 2022.

\bibitem{Munoz2}
S.~Munoz.
\newblock Classical solutions to local first-order extended mean field games.
\newblock {\em ESAIM Control Optim. Calc. Var.}, 2023, to appear.

\bibitem{weakMFGP2}
C.~Orrieri, A.~Porretta, and G.~Savar{\'e}.
\newblock A variational approach to the mean field planning problem.
\newblock {\em J. Funct. Anal.}, 277(6):1868--1957, 2019.

\bibitem{Porretta}
A.~Porretta.
\newblock On the planning problem for the mean field games system.
\newblock {\em Dyn Games Appl}, 4(2):231--256, 2014.

\bibitem{PorrettaLT}
A.~Porretta.
\newblock On the turnpike property for mean field games.
\newblock {\em Minimax Theory and Appl.}, 3(2):285--312, 2018.

\bibitem{Porretta2}
A.~Porretta.
\newblock Regularizing effects of the entropy functional in optimal transport
  and planning problems.
\newblock {\em J. Funct. Anal.}, 2023, to appear.

\end{thebibliography}
\end{document}